\newtheorem{thm}{\bf Theorem}[section]
\newtheorem*{thma}{Theorem A}
\newtheorem*{thmb}{Theorem B}
\newtheorem*{thmc}{Theorem C}
\newtheorem*{thmd}{Theorem D}
\newtheorem{prop}[thm]{\bf Proposition}
\newtheorem{lemma}[thm]{\bf Lemma}
\newtheorem{cor}[thm]{\bf Corollary}
\newtheorem{conjecture}[thm]{\bf Conjecture}
\theoremstyle{definition}
\newtheorem{definition}[thm]{\bf Definition}
\theoremstyle{remark}
\newtheorem{remark}[thm]{\bf Remark}
\newtheorem{question}[thm]{\bf Question}
\newtheorem{example}[thm]{\bf Example}
\numberwithin{equation}{section}
\newcommand{\Ass}{\operatorname{Ass}}
\newcommand{\id}{\operatorname{id}}
\newcommand{\Ext}{\operatorname{Ext}}
\newcommand{\Tor}{\operatorname{Tor}}
\newcommand{\Hom}{\operatorname{Hom}}
\newcommand{\codim}{\operatorname{codim}}
\newcommand{\depth}{\operatorname{depth}}
\newcommand{\coker}{\operatorname{coker}}
\newcommand{\Den}{\operatorname{Den}}
\newcommand{\rank}{\operatorname{rank}}
\newcommand{\p}{\mathfrak{p}}
\newcommand{\m}{\mathfrak{m}}
\newcommand{\DenRt}{\textnormal{Den}^R(t)}
\newcommand{\DenR}{\textnormal{Den}^R}
\newcommand{\fm}{\mathfrak{m}}
\newcommand{\ls}{\leqslant}
\newcommand{\gs}{\geqslant}
\newcommand{\fp}{\mathfrak{p}}
\newcommand{\fn}{\mathfrak{n}}
\newcommand{\uind}{\mathfrak{u}}
\newcommand{\unind}{\mathfrak{u}}
\DeclareMathOperator{\cx}{cx}
\DeclareMathOperator{\px}{px}
\DeclareMathOperator{\lr}{lr}
\newcommand{\w}{\omega}
\newcommand{\elll}{\ell\ell}
\newcommand{\NN}{\mathbb{N}}
\def\fn{\mathfrak{n}}
\def\fp{\mathfrak{p}}
\def \RR{\mathbb R}
\def \NN{\mathbb N}
\def \ZZ{\mathbb Z}
\def \H{\mathcal H}
\dedicatory{{Dedicated to Professor Bernd Ulrich on the occasion of his sixty-fifth birthday.}}
\begin{document}
\title[Extremal growth of Betti numbers and
trivial vanishing of (co)homology]{Extremal growth of Betti numbers and
trivial vanishing of (co)homology}

\author[Justin Lyle]{Justin Lyle}
\address{Justin Lyle\\ Department of Mathematics \\ University of Kansas\\405 Snow Hall, 1460 Jayhawk Blvd.\\ Lawrence, KS 66045}
\email{justin.lyle@ku.edu }

\author[Jonathan Monta\~no]{Jonathan Monta\~no}
\address{Jonathan Monta\~no \\ Department of Mathematical Sciences  \\ New Mexico State University  \\PO Box 30001\\Las Cruces, NM 88003-8001}
\email{jmon@nmsu.edu}

\begin{abstract}
A  Cohen-Macaulay local ring $R$ satisfies  trivial vanishing if  $\Tor_i^R(M,N)=0$ for all large $i$ implies $M$ or $N$ has finite projective dimension. If $R$ satisfies trivial vanishing then we also have that $\Ext^i_R(M,N)=0$  for all large $i$ implies  $M$ has finite projective dimension or $N$ has finite injective dimension. In this paper, we establish obstructions for the failure of trivial vanishing in terms of the asymptotic growth of the Betti and Bass numbers of the modules involved. These, together with results of Gasharov and Peeva,  provide sufficient conditions for $R$ to satisfy trivial vanishing; we provide sharpened conditions when $R$ is generalized Golod. Our methods allow us to settle the Auslander-Reiten conjecture in several new cases. In the last part of the paper, we provide criteria for the Gorenstein property based on consecutive vanishing of Ext. The latter results improve similar statements due to Ulrich, Hanes-Huneke, and Jorgensen-Leuschke.
  \end{abstract}

\keywords{Cohen-Macaulay rings, Ext, Tor, Betti numbers, resolutions, generalized Golod rings, complexity, Auslander-Reiten conjecture, Gorenstein rings.}
\subjclass[2010]{13D07, 13D02, 13C14, 13H10, 13D40.}

\maketitle

\section{Introduction}

Let $(R,\m,k)$ be a Cohen-Macaulay (CM) local ring.  In this paper we consider the following conditions on the vanishing of (co)homology.

\begin{enumerate}

\item For any finitely generated $R$-modules $M$ and $N$, $\Tor_i^R(M,N)=0$ for $i \gg 0$ implies that either $M$ or $N$ has finite projective dimension.

\item For any finitely generated $R$-modules $M$ and $N$, $\Ext^i_R(M,N)=0$ for $i \gg 0$ implies that either $M$ has finite projective dimension or $N$ has finite injective dimension.
\end{enumerate}

Condition (1) is the strongest possible condition one can impose on the asymptotic vanishing of Tor, while condition (2) is the strongest one can impose on the asymptotic vanishing of Ext. 
%Rings satisfying  $(1)$ have the least possible flexibility for asymptotic vanishing of homology, and those satisfying  $(2)$ have the least flexibility for asymptotic vanishing of cohomology.
While (1) always implies (2), these conditions  are equivalent under mild assumptions, e.g., if $R$ has a canonical module (Theorem \ref{equiv}).  Following Jorgensen and \c{S}ega (\cite{JS}), we say $R$ satisfies {\it trivial vanishing} if (1), and thus (2), holds for $R$.

In the past few decades, these rigidity conditions have gained much attention, in particular in connection with the following long-standing conjecture.

\begin{conjecture}[{Auslander-Reiten \cite{AR}}]\label{ARC}
Let $R$ be a Noetherian ring and $M$ a finitely generated $R$-module. If $\Ext^i_R(M,M\oplus R)=0$ for every $i>0$, then $M$ is projective.
\end{conjecture}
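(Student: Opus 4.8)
Since this is the (still open) Auslander--Reiten conjecture, the realistic target is to settle it over a broad class of rings, and the plan is: prove it for every local ring satisfying trivial vanishing, and then feed in the sufficient conditions for trivial vanishing developed in this paper --- obstructions coming from the asymptotic growth of Betti and Bass numbers, the theorem of Gasharov--Peeva, and the sharpened analysis for generalized Golod rings --- to obtain the new cases. First I would carry out the routine reductions: $\Ext$ commutes with localization and completion and projectivity is a local condition, so it suffices to treat a complete local ring $R$; the substantive situation is $R$ Cohen--Macaulay, where after completion $R$ has a canonical module and hence, by Theorem~\ref{equiv}, conditions (1) and (2) are equivalent. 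I then assume we are in a setting in which they hold.

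So let $R$ be Cohen--Macaulay local satisfying trivial vanishing and suppose $\Ext^i_R(M, M \oplus R) = 0$ for all $i > 0$. Then $\Ext^i_R(M \oplus R, M \oplus R) = 0$ for all $i > 0$ as well: this group decomposes as the direct sum of the $\Ext^i_R(X,Y)$ with $X,Y \in \{M,R\}$, and each summand vanishes for $i>0$ --- the two with $R$ in the first slot because $\Ext^{>0}_R(R,-)=0$, and $\Ext^i_R(M,M)$ and $\Ext^i_R(M,R)$ by hypothesis. Applying condition (2) with both arguments equal to $M \oplus R$ yields that $M \oplus R$ has finite projective dimension or finite injective dimension. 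In the latter case $\id_R R < \infty$, so $R$ is Gorenstein, and then finite injective dimension of $M \oplus R$ forces finite projective dimension; hence in every case $\pd_R M < \infty$.

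It remains to combine $\pd_R M < \infty$ with $\Ext^i_R(M,R) = 0$ for all $i > 0$. If $\pd_R M = n > 0$, minimality of the free resolution of $M$ makes the transpose of its last differential have entries in $\m$, so $\Ext^n_R(M,R) \neq 0$ by Nakayama, a contradiction. Therefore $\pd_R M = 0$, i.e., $M$ is free, in particular projective --- the desired conclusion.

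The real difficulty is not this short argument but its hypothesis: that $R$ satisfies trivial vanishing. That is the heart of the paper --- extracting obstructions to the failure of (1) from the growth rates of $\beta_i^R(M)$ and $\beta_i^R(N)$ (and of the Bass numbers for the Ext form), combining these with Gasharov--Peeva, and improving them under the generalized Golod hypothesis. Accordingly the conjecture remains untouched in general; what one gets are the Cohen--Macaulay rings for which trivial vanishing can be verified. A minor technical point to watch is that the hypotheses ensuring trivial vanishing (polynomial or bounded growth of Betti numbers, the generalized Golod property) behave well under the reduction to the complete local ring used at the outset.
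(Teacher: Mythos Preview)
Your recognition that Conjecture~\ref{ARC} is open and that the paper only establishes it in special cases is correct; there is no ``paper's own proof'' of the conjecture itself to compare against. Your conditional argument---that a CM local ring satisfying trivial vanishing satisfies the Auslander--Reiten conjecture---is valid, and in fact more direct than the paper's route, which in Corollary~\ref{finiteVirtual} goes trivial vanishing $\Rightarrow$ UAC $\Rightarrow$ ARC, invoking \cite{CH} for the second implication.

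The limitation is one of scope, and it is substantive. Your plan to ``feed in the sufficient conditions for trivial vanishing'' does not recover all of the paper's new cases of the conjecture. In the proof of Theorem~\ref{mainAR} the reduction to the Artinian case is followed by a split on Loewy length: for $\ell\geq 4$ the numerical hypothesis forces inequality~\eqref{firstIn} and hence trivial vanishing, but for $\ell\leq 3$ the paper instead invokes \cite[4.1]{HS04}, a self-$\Ext$ result over rings with $\m^3=0$ that does not pass through trivial vanishing. This split is essential: the ring in Example~\ref{TheE} with $e=8$, $c=4$, $\ell=3$ satisfies $e\leq\tfrac{7}{4}c+1$, so Theorem~\ref{mainAR}(1) applies, yet it fails trivial vanishing. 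Similarly, parts of Corollary~\ref{finiteVirtual}---the codimension-three case and portions of the small-multiplicity case---are handled via finite CI-dimension and \cite{AvrBuch} rather than through trivial vanishing. So your approach, while correct where it applies, yields a strict subset of the cases the paper actually settles.
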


Recently, much research activity has been centered on proving this conjecture.  However, up to date, it remains open (see for example \cite{HL},\cite{HS04},\cite{CIST},\cite{NS},\cite{Lindo}, and see \cite[Appendix A]{CH} for a survey on the topic).

The study of trivial vanishing was pioneered by Huneke and Wiegand (\cite{HW}) and independently by Miller (\cite{CMiller}), who show this condition holds for hypersurface rings.  This was later extended by Jorgensen to show that any Golod ring satisfies trivial vanishing (\cite{Jorgensen}).  On the other hand, \c{S}ega showed that  trivial vanishing fails if $\codim(R)\gs 2$ and the completion $\hat{R}$ has {embedded deformations} (i.e., $\hat{R}\cong Q/(\underline{f})$  for some local ring $(Q,\fp)$  and $Q$-regular sequence $\underline{f}\subseteq \fp^2$) (\cite{Sega}). In particular, any complete intersection of codimension larger than one cannot satisfy trivial vanishing, a result originally obtained by Avramov and Buchweitz using the theory of support varieties (\cite{AvrBuch}).

To summarize these results, we have a good understanding of the rigid behavior of $\Ext$ and $\Tor$ over rings whose modules have Betti numbers of extremal growth; modules over Golod rings have the fastest growth of Betti numbers while those over complete intersections have the slowest (\cite[5.3.2 and 8.1.2]{Avr}).  A  unification of these settings  is given by the {\it generalized Golod rings} of which both are examples; see \cite{Avr94} or Subsection \ref{gGsection} for the definition. While the behavior of Betti numbers over generalized Golod rings is more subtle, they still possess a wealth of structure.  In particular, any module $M$ over a generalized Golod ring has a rational Poincar\'e series and these rational expressions can be made to share a common denominator (\cite[1.5]{Avr94}). By studying growth rates of Betti numbers in general, we establish a sufficient numerical condition for any CM local ring to satisfy trivial vanishing, and we provide a refined version when $R$ is generalized Golod.  Using this result, we are able to establish the Auslander-Reiten conjecture in a number of new cases.

In order to describe our main results, we need to introduce some preliminary notation. We denote by $e_R(M)$ (or simply $e(R)$ when $M=R$) the {\it (Hilbert-Samuel) multiplicity} of the $R$-module $M$, and we write $\mu(M)$ for the minimal number of generators of $M$. We let $\codim R:=\mu(\m)-\dim R$ denote the {\it codimension} of $R$. In our proofs, we may assume $R$ has an infinite residue field (See Proposition \ref{finite flat} and Remark \ref{reductions}). Then we can define the {\it Loewy length} $\elll(R)$ as the maximum among $\min\{i\mid \fm^i\subseteq J\}$ where $J$ ranges over the minimal reductions of $\fm$ (see Definition \ref{Low}).  We now present our first theorem; see Theorems \ref{main} and \ref{maingengolod}.

\begin{thma}
Let $R$ be a CM local ring and set $c =  \codim(R)$ and $\ell=\elll(R)$.  Assume one of the following conditions holds.
\begin{enumerate}
\item[$(1)$] $e(R)<\frac{4c+2\ell-1-\sqrt{8c+4\ell-3}}{2}$.
\item[$(2)$] $R$ is generalized Golod and $e(R)\ls2c+\ell-4$.
\end{enumerate}
Then  $R$ satisfies trivial vanishing.
\end{thma}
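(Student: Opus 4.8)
The plan is to reduce the statement to a numerical criterion on Betti numbers and then invoke the Gasharov--Peeva machinery. First I would assume, by the standard flat-base-change reduction alluded to in the excerpt, that the residue field $k$ is infinite, and pick a minimal reduction $J$ of $\fm$ realizing the Loewy length, so that $\fm^\ell\subseteq J$ and $R/J$ is artinian of length $e(R)$. The idea is that the failure of trivial vanishing forces the existence of modules $M$, $N$ with $\Tor_i^R(M,N)=0$ for $i\gg0$, neither of finite projective dimension, and from such a pair one extracts modules whose Betti sequences grow too slowly to be consistent with the bound on $e(R)$. Concretely, I expect the earlier part of the paper (the "obstructions for the failure of trivial vanishing" advertised in the abstract) to supply a lower bound of the form: if trivial vanishing fails, then there is an infinite-projective-dimension module $M$ whose Betti numbers $\beta_i^R(M)$ are eventually bounded by a polynomial of controlled degree, or satisfy a linear recurrence with small "radius". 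The complexity/curvature invariants $\cx$ and $\curv$ defined via the macros in the preamble are the natural bookkeeping devices here.

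The key steps, in order, would be: (i) translate "$R$ satisfies trivial vanishing" into "no pair $(M,N)$ as above exists"; (ii) invoke the obstruction theorem from earlier in the paper to conclude that such a pair would produce a module $M$ with $\pd_R M=\infty$ but with Betti numbers constrained — say $\beta_{i+1}^R(M)\le \beta_i^R(M)\cdot r$ eventually, or $\sum$ of consecutive Betti numbers bounded — where the constraint is governed by $c$ and $\ell$; (iii) use a result of Gasharov--Peeva (presumably: over a ring with $\codim R=c$, a module of infinite projective dimension has Betti numbers whose growth rate is bounded below in terms of $c$, or more precisely that $\curv_R M$ is at least the reciprocal of the largest root of a specific polynomial attached to $c$ and $\ell$) to force $e(R)$ to be large enough to violate hypothesis $(1)$; (iv) for part $(2)$, when $R$ is generalized Golod, replace the generic Gasharov--Peeva bound by the sharper one coming from the common-denominator rationality of Poincar\'e series over generalized Golod rings (the denominator polynomial has degree and coefficients controlled by $c$ and $\ell$, giving the improved linear bound $e(R)\le 2c+\ell-4$), and run the same contradiction.

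The main obstacle I anticipate is step (iii): pinning down exactly how the Gasharov--Peeva inequality interacts with the multiplicity bound to produce the precise quadratic expression $\frac{4c+2\ell-1-\sqrt{8c+4\ell-3}}{2}$. This number is visibly the smaller root of a quadratic $x^2 - (\text{linear in }c,\ell)\,x + (\text{linear in }c,\ell)=0$ — one checks that if $e(R)$ is below this root then a certain discriminant-type quantity built from $c$, $\ell$, and $e(R)$ is negative, which is precisely what rules out the slow-growth Betti pattern that a trivial-vanishing failure would require. Making this match exact — i.e., identifying which combinatorial/homological quantity the quadratic is the discriminant of, and verifying the inequalities go the right way at the boundary — is where the real work lies; the generalized-Golod refinement in $(2)$ should then be comparatively formal once the structure of the common denominator is in hand. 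A secondary technical point is ensuring the reduction to infinite residue field and to $\fm$-adic completion does not disturb either trivial vanishing or the invariants $c$, $\ell$, $e(R)$, which I expect is handled by Proposition~\ref{finite flat} and Theorem~\ref{equiv} cited in the excerpt.
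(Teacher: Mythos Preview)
Your architecture for part~(1) is right: assume trivial vanishing fails, produce non-free MCM modules $M,N$ with $\Tor_i^R(M,N)=0$ for $i\gg0$, bound their Betti growth from above via the vanishing and from below via Gasharov--Peeva, and force a contradiction for small $e$. But you are vague precisely where the content lies. The upper bound is not a constraint on a single module: the paper's Lemma~\ref{fkCom}(1), proved via the Ulrich index and the Jorgensen--Leuschke inequality (Lemma~\ref{superThm}), gives the \emph{joint} bound $(\lr_R(M)+1)(\lr_R(N)+1)\le e$, where $\lr_R(-)=\limsup_n \beta_{n+1}/\beta_n$. Combined with the linear Gasharov--Peeva bound $\beta_n\ge(2c-e+\ell-2)\beta_{n-1}$ of Proposition~\ref{eventInc}(2), which forces $\lr_R(M)+1,\ \lr_R(N)+1\ge 2c+\ell-1-e$, one obtains $(2c+\ell-1-e)^2\le e$. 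Your mystery quadratic is exactly this inequality solved for $e$.

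For part~(2) your proposed mechanism is wrong. The sharpening for generalized Golod rings is not obtained by inspecting the degree or coefficients of the common denominator $\DenRt$. What is actually used is Sun's theorem (Proposition~\ref{ratLimExists}): over such a ring, for any module of infinite complexity the limit $\lim_n \beta_{n+1}/\beta_n$ genuinely exists and exceeds~$1$. This buys two upgrades. First, Lemma~\ref{fkCom}(2) now applies, giving the tighter product bound $\lr_R(M)\lr_R(N)\le e-c-\ell+2$. Second, one may divide the \emph{quadratic} Gasharov--Peeva recurrence $\beta_n\ge c\beta_{n-1}-(e-c-\ell+2)\beta_{n-2}$ of Proposition~\ref{eventInc}(1) by $\beta_{n-1}$ and pass to the limit, forcing each $\lr$ to lie above the larger root of $z^2-cz+(e-c-\ell+2)$. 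The product of these lower bounds strictly exceeds $e-c-\ell+2$ whenever $e\le 2c+\ell-3$, contradicting the upper bound; the extra unit in the hypothesis $e\le 2c+\ell-4$ is there to exclude modules of complexity one via Proposition~\ref{eventInc}(2). Your denominator-polynomial route does not supply either of these ingredients and would not reach the stated linear bound.
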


The authors in \cite{JS} provide examples of rings $R$ that do not satisfy trivial vanishing for which the completion has no embedded deformations. The first example is a Gorenstein ring with $e(R)=12$ and $\codim(R)=5$. From \cite{Sega}, we know this example is minimal with respect to codimension, as no such Gorenstein ring exists with $\codim(R)\ls 4$. The second example from  \cite{JS}  has $e(R)=8$ and $\codim(R)=4$.  In our next theorem, we show that both examples are minimal with respect to codimension and multiplicity; see Propositions \ref{lowdim} and \ref{codim3}.

\begin{thmb}\label{thmB}
Let $R$ be a CM local ring with $\codim(R)\neq 1$ and assume it satisfies one of the following conditions.
\begin{enumerate}
\item[$(1)$] $\codim(R)\ls 3$.
\item[$(2)$] $e(R)\ls 7$.
\item[$(3)$]  $e(R)\ls 11$ and $R$ is Gorenstein.
\end{enumerate}
Then $R$  satisfies trivial vanishing if and only if the completion $\hat{R}$ has no embedded deformations.
\end{thmb}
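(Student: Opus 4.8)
The plan is to combine Theorem A with the known converse direction coming from Şega's work, reducing everything to showing that the failure of Theorem A's numerical hypotheses forces $\hat R$ to have an embedded deformation in each of the three listed regimes. The backward implication is automatic: if $\hat R$ has an embedded deformation and $\codim(R)\geq 2$, then by Şega's theorem (and since trivial vanishing passes to and from completion, cf.\ Proposition \ref{finite flat} and Remark \ref{reductions}) $R$ cannot satisfy trivial vanishing. So the content is the forward implication, and since $\codim(R)\neq 1$, we may assume throughout that either $\codim(R)=0$---in which case $R$ is a field after going modulo a system of parameters and the statement is vacuous---or $\codim(R)\geq 2$. First I would pass to $\hat R$, then mod out a maximal regular sequence to assume $R$ is Artinian (this preserves $\codim$, $e(R)$, the Gorenstein property, trivial vanishing, and the existence of embedded deformations up to the standard bookkeeping), and invoke that $R$ may be taken to have infinite residue field.

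For case (1), $\codim(R)\leq 3$: the Artinian reduction $R=Q/I$ with $Q$ regular of dimension $c=\codim(R)\in\{2,3\}$ and $I\subseteq\fp^2$. When $c=2$, the Hilbert--Burch theorem constrains $I$ and one checks $e(R)$ is small enough that condition (1) of Theorem A applies unless $R$ is a complete intersection, which is an embedded deformation; this is Proposition \ref{codim3}. When $c=3$ the argument is more delicate and I expect to split on whether $\fm^3=0$ or not: if $\fm^3=0$ then $\ell\leq 2$ and a direct count of the possible Hilbert functions $(1,3,n)$ shows that either $e(R)=1+3+n$ is small enough for Theorem A(1), or $R$ is forced to be Golod (hence satisfies trivial vanishing with no embedded deformation issue) or Gorenstein with an embedded deformation; if $\fm^3\neq 0$ one uses $\ell\geq 3$ to push $4c+2\ell-1$ above the relevant threshold. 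For case (2), $e(R)\leq 7$: here I would argue purely numerically---with $c\geq 2$ and $\ell\geq 2$ always, and $\ell\geq 2$ improving to larger values whenever $e(R)$ is not too small relative to $c$ (via the inequality $e(R)\geq 1+c+\cdots$ controlled by the embedding dimension and Loewy length, i.e.\ $e(R)\geq \binom{c+1}{2}$-type bounds forcing $c$ small when $e$ is small)---one runs through the finitely many pairs $(c,e(R))$ with $e(R)\leq 7$ not satisfying Theorem A(1), and in each residual pair shows by a structural classification (Hilbert--Burch for $c=2$, and the short possible Hilbert functions for larger $c$ with small $e$) that $R$ must be a complete intersection, hence has an embedded deformation. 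For case (3), $R$ Gorenstein with $e(R)\leq 11$: by Şega's theorem $\codim(R)\leq 4$ already rules out embedded deformations in codimension $\leq 4$ (so there one needs trivial vanishing to hold); I would use that a Gorenstein ring has symmetric Hilbert function, which together with $e(R)\leq 11$ severely limits $(c,\ell)$, and then either Theorem A(1) applies or $R$ is a complete intersection (e.g.\ codimension $\leq 3$ Gorenstein is a complete intersection, and the codimension-$4$ Gorenstein case with $e\leq 11$ is handled by the classification of Artinian Gorenstein algebras of small multiplicity, again yielding complete intersections).

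The main obstacle I anticipate is the case-by-case verification that, once Theorem A(1) fails, the ring is \emph{forced} to be a complete intersection (or Golod, or otherwise trivially covered)---in other words, controlling the boundary cases where $e(R)$ sits just above the quadratic threshold $\frac{4c+2\ell-1-\sqrt{8c+4\ell-3}}{2}$. This requires genuine structure theory of Artinian algebras of small multiplicity and embedding dimension---Hilbert--Burch in codimension $2$, the Buchsbaum--Eisenbud structure theorem for Gorenstein codimension $3$, and Macaulay's inverse-system classification of short Gorenstein algebras in codimension $4$ with $e\leq 11$---rather than the soft numerical estimates that drive Theorem A. I would organize this into Propositions \ref{lowdim} and \ref{codim3}, isolating the codimension-$2$ and $3$ analyses and the small-multiplicity Gorenstein analysis, and then deduce Theorem B by checking that every $(c,\ell,e(R))$ permitted by hypotheses (1), (2), or (3) either lies in the range of Theorem A or falls into one of these structurally classified residual families.
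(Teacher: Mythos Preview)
Your proposal has genuine gaps in cases (1) and (3); the approach via Theorem A alone cannot carry the argument there.

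For case (1) with $\codim(R)=3$: a codimension-3 CM local ring has no a priori bound on $e(R)$, so no numerical inequality of the form in Theorem A(1) can cover all such rings. Your proposed split on whether $\fm^3=0$ does not help---with $c=3$ and $\ell\gs 3$ the threshold in Theorem A(1) is roughly $5.6$, leaving infinitely many rings unaccounted for, and these are certainly not all complete intersections or Golod. The paper's proof of Proposition \ref{codim3} is structural rather than numerical: it invokes the Avramov--Kustin--Miller classification of the Tor-algebra $\Tor^P(R,k)$ into types \textbf{TE}, \textbf{B}, \textbf{G($r$)}, \textbf{H($p,q$)}, observes that in all but two exceptional subtypes the trivial-extension complement $W$ is nonzero (whence trivial vanishing holds by \cite[5.3]{AINS}), handles type \textbf{G($n$)} via \c{S}ega, and for \textbf{H($n-1,\tau$)} uses \cite{COW} to force $\tau=n-2$, which is exactly the embedded-deformation case by Avramov's characterization.

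For case (3), Gorenstein with $e(R)\ls 11$: your assertion that ``codimension $\ls 3$ Gorenstein is a complete intersection'' is false---codimension-3 Gorenstein ideals include the Pfaffian ideals of Buchsbaum--Eisenbud, which are not complete intersections. Moreover, Theorem A(1) does not suffice numerically here (e.g.\ $c=5$, $\ell=3$ gives a threshold below $10$). The paper's argument is different: it first reduces to $c\gs 5$ and $\ell\gs 3$ via case (1), \c{S}ega's codimension-4 result, and \cite[1.1]{JS}; then it invokes Gupta's theorem \cite[6.9]{Gupta} that Gorenstein rings with $e\ls 11$ are generalized Golod, applies Theorem A(2) (Theorem \ref{maingengolod}) to get complexity $\ls 1$, and concludes finite projective dimension via \cite[1.1]{GP90}. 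The generalized Golod input is essential and is not visible in your outline.

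For case (2), $e(R)\ls 7$, your exhaustive case analysis could in principle be made to work, but the paper's route is much shorter: reduce to $c\gs 4$ using case (1), reduce to $\ell\gs 3$ using \cite[1.1]{JS}, and then Theorem A(1) applies directly since the threshold with $c\gs 4$, $\ell\gs 3$ exceeds $7$.
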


We note that the assumption $\codim(R)\neq 1$ is necessary by \cite{HW}.  We also remark that a key point to Theorem B is that CM rings of small codimension and multiplicity tend to be generalized Golod (see Example \ref{gGolod}). The only case covered by Theorem B where we do not know $R$ is generalized Golod is the case where Artinian reductions of $R$ have $h$-vector $(1,4,2)$.

As a consequence of our work, we are able to verify the Auslander-Reiten conjecture in some new cases; see Theorem \ref{mainAR} and Corollary \ref{e8}.

\begin{thmc}
Let $R$ be a CM local ring. Assume $R$ satisfies one of the following conditions.
\begin{enumerate}
\item[$(1)$] $e(R)\ls \frac{7}{4}\codim(R)+1$.
\item[$(2)$]   $e(R)\ls  \codim(R)+6$ and $R$ is Gorenstein.
\end{enumerate}
Then the Auslander-Reiten conjecture $($Conjecture \ref{ARC}$)$ holds for $R$. In particular, the conjecture holds if $e(R)\ls 8$, or if $e(R)\ls 11$ and $R$ is Gorenstein.
 \end{thmc}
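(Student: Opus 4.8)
The plan is to derive the statement from the Betti-number growth obstructions behind Theorem~A, applied in the two special configurations $N=R$ and $N=M$---precisely the instances handed to us by the Auslander--Reiten hypothesis, and in which the numerical thresholds improve. By Proposition~\ref{finite flat} and Remark~\ref{reductions} we may assume $(R,\m,k)$ is complete with $k$ infinite, so that $R$ has a canonical module $\omega$; the invariants $e(R)$, $\codim R$, $\elll(R)$ and the Gorenstein property are unaffected, and since the maps $R\to\hat R$ and $R\to R[X]_{\m R[X]}$ are faithfully flat and local with regular closed fibre, $\Ext$ commutes with the corresponding base changes and freeness descends along them; hence Conjecture~\ref{ARC} descends along these maps and it suffices to verify it after the reduction.

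So let $M$ be finitely generated with $\Ext^i_R(M,M\oplus R)=0$ for every $i>0$; we must show $M$ is free. If $\pd_R M=n\geq 1$, the last differential of a minimal free resolution of $M$ has entries in $\m$, so dualizing into $R$ gives $\Ext^n_R(M,R)\neq 0$, contradicting $\Ext^{>0}_R(M,R)=0$. Thus we may assume $\pd_R M=\infty$ (and $M$ non-free), while $\Ext^{>0}_R(M,M)=0$. I next arrange that $\id_R M=\infty$ as well: this is automatic when $R$ is Gorenstein, and when $R$ is not Gorenstein I would rule out $\id_R M<\infty$ via the canonical module and the change-of-rings underlying Theorem~\ref{equiv}---writing $M\cong\omega\otimes_R L$ with $\pd_R L<\infty$ and $\Tor^R_{>0}(\omega,L)=0$ (Foxby equivalence) and carrying $\Ext^{>0}_R(M,R)=0$ across $\mathbf R\Hom_R(\omega\otimes^{\mathbf L}_R L,R)\simeq\mathbf R\Hom_R\!\bigl(L,\mathbf R\Hom_R(\omega,R)\bigr)$ forces $\pd_R M<\infty$, a contradiction. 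We are thus reduced to proving that no finitely generated module $M$ satisfies $\pd_R M=\id_R M=\infty$ together with $\Ext^{>0}_R(M,M)=0$ when $R$ satisfies hypothesis $(1)$ or $(2)$.

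For this I would rerun the argument proving Theorems~\ref{main} and \ref{maingengolod} in the special case $N=M$. The obstruction produced there to the vanishing of $\Ext^i_R(M,N)$ for $i\gg 0$, under $\pd_R M=\id_R N=\infty$, is a bilinear inequality in the Betti numbers $\beta^R_i(M)$ and the Bass numbers of $N$, controlled by $e(R)$, $\codim R$ and $\elll(R)$; setting $N=M$ turns it into an essentially quadratic inequality in $\beta^R_i(M)$, so it suffices that the $\beta^R_i(M)$ grow---which they do, since for the small $e(R)$ and $\codim R$ in question $R$ is automatically generalized Golod (Example~\ref{gGolod}) and $\pd_R M=\infty$. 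Matching the resulting constants, the quadratic bound fails already when $e(R)\leq\tfrac74\codim(R)+1$, and in the Gorenstein case the self-duality of the minimal free resolution of $M$ sharpens this to $e(R)\leq\codim(R)+6$. Extracting this self-paired estimate and pinning down the constants $\tfrac74$ and $+6$ is the step I expect to be the main obstacle, with the exclusion of the case $\id_R M<\infty$ above a close second.

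For the ``in particular'' assertions: if $e(R)\leq 8$, then hypothesis $(1)$ holds whenever $\codim(R)\geq 4$, so it remains to treat $\codim(R)\leq 3$; here $\codim(R)=1$ is \cite{HW}, while for $\codim(R)\in\{2,3\}$ one argues as in Propositions~\ref{lowdim} and \ref{codim3}: such $R$ either satisfies trivial vanishing---whence $M$ is free by the above---or $\hat R$ has an embedded deformation, and in the latter case $\hat R\cong Q/(\underline f)$ for a $Q$-regular sequence $\underline f\subseteq\fp^2$ with $\codim Q<\codim R$, so Conjecture~\ref{ARC} for $R$ follows by induction on $\codim$, the base cases being the regular and hypersurface rings and the known validity of the conjecture for complete intersections. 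Likewise, if $e(R)\leq 11$ and $R$ is Gorenstein, then hypothesis $(2)$ holds once $\codim(R)\geq 5$, and for Gorenstein $R$ with $\codim(R)\leq 4$ one invokes \cite{Sega}---such a ring satisfies trivial vanishing unless $\hat R$ has an embedded deformation---together with the same induction.
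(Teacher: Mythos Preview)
Your proposal has a genuine gap at the central step. The improved thresholds $\tfrac74\codim(R)+1$ and $\codim(R)+6$ do \emph{not} arise from specializing the bilinear obstruction of Lemma~\ref{fkCom} to $N=M$. Indeed, the inequality $(\lr_R(M)+1)(\lr_R(N)+1)\le e(R)$ combined with the Gasharov--Peeva lower bound $\lr_R(-)\ge 2c-e+\ell-2$ yields, by AM--GM, exactly the same threshold whether $N=M$ or not; the self-paired case gives no improvement. Moreover, in the $\Ext$ setting the relevant Tor pair is $(M,M^\vee)$, not $(M,M)$, so outside the Gorenstein case you are not even comparing a quantity with itself. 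Your assertion that ``matching the resulting constants'' produces $\tfrac74$ is therefore unsupported, and your claim that $R$ is ``automatically generalized Golod for the small $e(R)$ and $\codim R$ in question'' is false: the hypotheses allow $e(R)$ and $\codim(R)$ arbitrarily large.

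The paper's argument is entirely different and hinges on a case split on the Loewy length $\ell=\elll(R)$ after passing to an Artinian reduction. When $\ell\le 3$ (equivalently $\m^3=0$), the Auslander--Reiten conclusion is supplied by the result \cite[4.1]{HS04} of Huneke--\c{S}ega--Vraciu, which treats self-$\Ext$ vanishing over such rings directly; this is the key external input you are missing. When $\ell\ge 4$, a short calculus computation shows that $e(R)\le\tfrac74 c+1$ forces the strict inequality~\eqref{firstIn}, so Theorem~\ref{main} gives trivial vanishing outright and the conjecture follows. Part~(2) is then dispatched by combining Corollary~\ref{finiteVirtual}(4) for $e(R)\le 11$, part~(1) for $c\ge 7$, and the single leftover case $(c,e,\ell)=(6,12,4)$ via Example~\ref{Gup}. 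The detour through $\id_R M$ and Foxby equivalence is unnecessary. Your treatment of the ``in particular'' assertions via induction on $\codim$ along embedded deformations is plausible, but the paper handles these more directly through Corollary~\ref{finiteVirtual}.
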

 In several other cases we show that a stronger condition is satisfied, namely,  the {\it uniform Auslander condition} which  implies Conjecture \ref{ARC} by \cite{CH}; see Corollary \ref{UAC}. 

In \cite{Ulrich}, Ulrich provides conditions on a finitely generated $R$-module $M$ so that the vanishing of $\Ext^i_R(M,R)=0$ for $1 \ls i \ls \dim(R)$ forces $R$ to be Gorenstein. In this sense, $M$ can be used a test module for the Gorenstein property. In \cite{JL07} and \cite{HH}, some variations on this result are included.  In the last part of the paper, we expand upon these results by proving the following; see Theorem \ref{stronghh}. We recall that the ring $R$ is {\it generically Gorenstein} if $R_\fp$ is Gorenstein for every $\fp\in \Ass(R)$.

\begin{thmd}
Let $R$ be a generically Gorenstein CM local ring that has a canonical module.  Assume there exists a Maximal Cohen-Macaulay $R$-module $M$  with  $e_R(M)\ls 2\mu(M)$ and such that $\Ext^i_R(M,R)=0$ for $1 \ls i \ls \dim(R)+1$. Then $R$ is Gorenstein.
\end{thmd}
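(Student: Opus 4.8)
The plan is to reduce to the Artinian case and then leverage the vanishing $\Ext^i_R(M,R)=0$ for $1\le i\le \dim(R)+1$ to produce an $R$-module whose Ext-behaviour against $R$ forces the canonical module $\omega_R$ to have a free direct summand, which for a CM local ring with canonical module is equivalent to the Gorenstein property. First I would pass to an Artinian reduction: choose a maximal $R$-regular sequence $\underline{x}$ that is also a system of parameters; since $M$ is MCM it is also regular on $M$, so $\bar{M}=M/\underline{x}M$ is a module over the Artinian local ring $\bar{R}=R/(\underline{x})$ with $\mu(\bar M)=\mu(M)$ and $e_{\bar R}(\bar M)=\ell_{\bar R}(\bar M)=e_R(M)$, hence still $\ell_{\bar R}(\bar M)\le 2\mu(\bar M)$. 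The regular sequence also converts the vanishing $\Ext^i_R(M,R)=0$ for $1\le i\le\dim R+1$ into $\Ext^i_{\bar R}(\bar M,\bar R)=0$ for $1\le i\le 1$, i.e. $\Ext^1_{\bar R}(\bar M,\bar R)=0$ (one degree is "spent" passing the first $\dim R$ elements, with one to spare because $M$ is MCM so no higher Ext obstructions appear). Simultaneously, $R$ generically Gorenstein and the choice of $\underline{x}$ inside a suitable sum of minimal primes should keep $\bar R$ generically Gorenstein, or at least retain enough structure that $\omega_{\bar R}$ is faithful and has no free summand iff $\bar R$ is non-Gorenstein.

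Next, with $\bar R$ Artinian and $\bar M$ satisfying $\Ext^1_{\bar R}(\bar M,\bar R)=0$, I would look at a minimal presentation $\bar R^{b}\xrightarrow{\varphi}\bar R^{a}\to\bar M\to 0$ with $a=\mu(\bar M)$, dualize into $\bar R$, and use $\Ext^1_{\bar R}(\bar M,\bar R)=0$ to conclude that the dual sequence $0\to \bar M^*\to \bar R^{a}\xrightarrow{\varphi^t}\bar R^{b}$ is exact on the left and that $\mathrm{coker}(\varphi^t)$ embeds as well — in effect $\bar M^*=\mathrm{Hom}_{\bar R}(\bar M,\bar R)$ is a second syzygy and satisfies the same length/generator inequality up to the Artinian duality bookkeeping. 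The key numerical input is the inequality $e_R(M)=\ell_{\bar R}(\bar M)\le 2\mu(\bar M)$: applied to the exact sequence $0\to \Omega\to \bar R^{a}\to \bar M\to 0$, it forces $\ell_{\bar R}(\Omega)=\ell(\bar R)\,a-\ell_{\bar R}(\bar M)$, and the bound $\ell_{\bar R}(\bar M)\le 2a$ together with the fact that $\Omega\subseteq \mathfrak m_{\bar R}\bar R^{a}$ should pin down $\mathrm{Soc}(\bar M)$ and the "torsionless defect" tightly enough that either $\bar M$ has a free summand (whence one reduces the rank and inducts) or $\bar M$ is forced to be close to $k^{\oplus}$, at which point $\Ext^1_{\bar R}(\bar M,\bar R)=0$ against a non-Gorenstein $\bar R$ gives a contradiction via $\mathrm{soc}(\bar R)$ being non-principal. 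This is exactly the style of argument in Ulrich's original test-module theorem and the Hanes–Huneke refinement, now with the sharper constant $2\mu(M)$ and the extra Ext-degree doing the work of removing the generic Gorenstein hypothesis in the earlier bounds.

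The hard part will be making the reduction to the Artinian local (rather than merely Artinian) ring while preserving \emph{both} the generically Gorenstein hypothesis in a usable form \emph{and} all of $\Ext^i_R(M,R)=0$ for the full range $1\le i\le\dim R+1$; one cannot kill the generic Gorenstein condition by a naive sequence, so I expect to need $\underline{x}$ chosen generic enough (using the infinite residue field) to stay in a dense set of parameters avoiding the non-Gorenstein locus, and to track carefully that MCM-ness of $M$ is what licenses shifting the vanishing range down by exactly $\dim R$ with one extra degree surviving. The second delicate point is extracting a genuine contradiction — or a free summand — purely from $\ell_{\bar R}(\bar M)\le 2\mu(\bar M)$ and $\Ext^1_{\bar R}(\bar M,\bar R)=0$: the constant $2$ is tight (it is what distinguishes this from $e_R(M)\le \mu(M)$, where even torsionlessness forces freeness), so the socle/length accounting has to be done exactly, presumably by comparing $\dim_k\mathrm{Soc}(\bar M)$, $\mu(\bar M)$, and $\dim_k\mathrm{Soc}(\bar R)$ and invoking that $\bar R$ non-Gorenstein means $\dim_k\mathrm{Soc}(\bar R)\ge 2$. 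Once those two technical hurdles are cleared, the conclusion "$\omega_R$ has a free summand $\Rightarrow R$ Gorenstein" is immediate from the standard structure theory of the canonical module over a CM local ring.
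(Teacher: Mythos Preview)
Your reduction to the Artinian case has a fatal flaw: for an Artinian local ring $\bar R$, the only associated prime is the maximal ideal, so ``generically Gorenstein'' for $\bar R$ is equivalent to $\bar R$ being Gorenstein outright. Thus the hypothesis you need to exploit collapses precisely to the conclusion you are trying to prove, and there is no way to choose the regular sequence $\underline{x}$ to salvage this. You correctly flag this as a ``hard part,'' but it is not a technicality to be managed---it is the reason the paper does \emph{not} pass to an Artinian reduction at all. Your subsequent socle/length accounting is also too vague to assess; in particular, $\ell_{\bar R}(\bar M)\le 2\mu(\bar M)$ together with $\Ext^1_{\bar R}(\bar M,\bar R)=0$ does not obviously force a contradiction when $\dim_k\operatorname{Soc}(\bar R)\ge 2$, and you give no mechanism for it.

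The paper's argument stays in the original ring and works directly with $\omega_R$. The vanishing $\Ext^i_R(M,R)=0$ for $1\le i\le d+1$ forces $M\otimes_R\omega_R$ and $M\otimes_R\Omega^1_R(\omega_R)$ to be MCM (via an Ext--Tor transfer against the canonical dual). Since $R$ is generically Gorenstein, $\omega_R$ has rank one, so $e_R(M\otimes_R\omega_R)=e_R(M)$; combining this with $e_R(M\otimes_R\omega_R)\ge\mu(M\otimes_R\omega_R)=\mu(M)\,r(R)$ and the hypothesis $e_R(M)\le 2\mu(M)$ gives $r(R)\le 2$. The case $r(R)=1$ is done. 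For $r(R)=2$ the same counting yields $\beta_1^R(\omega_R)\le 2$, and the paper rules this out by a separate structural lemma: if $R$ is generically Gorenstein with $r(R)=2$ and $\beta_1^R(\omega_R)\le 2$, one shows (via a two-generated presentation and dualizing) that $\omega_R$ would split off a free summand, contradicting $r(R)=2$. The generically Gorenstein hypothesis is used here precisely to give $\omega_R$ constant rank, which is what makes the rank-counting in that lemma work---and this is exactly the information that evaporates under your Artinian reduction.
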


We now describe the structure of the paper. In Section \ref{notation}, we set the notation that is used throughout the paper. We also discuss preliminary results and definitions that are necessary in our proofs. In Section \ref{tvS}, we give the definition of trivial vanishing and consider its behavior under change of rings, specifically under local maps of finite flat dimension and hyperplane sections. In Section \ref{mainResults}, we include our main results; here we prove Theorem A ( Theorems \ref{main} and \ref{maingengolod}) and  Theorem B (Propositions \ref{lowdim} and \ref{codim3}). Section \ref{ARSection} includes consequences for the uniform Auslander condition and the Auslander-Reiten conjecture. In particular, this section includes the proof of Theorem C (Theorem \ref{mainAR}). The last section, Section \ref{Gorenst}, includes the proof of Theorem D (Theorem \ref{stronghh}) and related results.

\section{Notation and preliminary results}\label{notation}

Throughout this paper we  assume all rings are Noetherian and all modules are finitely generated. Let $(R,\fm,k)$ be a local ring of dimension $d$ and  $M$ be an $R$-module.  We denote  by $\beta_i^R(M)=\dim_k \Tor^R_i(k,M)$ the  {\it $i$th Betti number} of $M$, $\mu(M)=\beta_0^R(M)$, the minimal number of generators of $M$, and $P_M^R(t)=\sum_{i=0}^{\infty} \beta_i^R(M)t^n$ the  {\it Poincar\'e series} of $M$ over $R$. We also write $\Omega_i^R(M)$ for the {\it $i$th syzygy} of  $M$ and   $r(R)=\dim_k \Ext_R^{\depth R}(k,M)$ for its {\it type}.   We write $\hat{R}$ for the $\fm$-adic completion of $R$.

If $M$ has dimension $s$, we denote by $$e_R(M)=\lim_{n\rightarrow \infty} \frac{s!\lambda(M/\fm^n)}{n^s}$$ the {\it Hilbert-Samuel multiplicity} of $M$, where  $\lambda(N)$ denotes the length of the $R$-module $N$. If $M=R$, we simply write  $e(R)$. The following invariant has appeared in different forms and has played an important role in several results in the literature (see for example \cite{HH},\cite{HS04}, \cite{JL07},\cite{Ulrich}). We provide the following notation and name in order to simplify some of the statements.

\begin{definition} \label{uind}
Let $R$ be a local ring and $M$  a non-zero $R$-module.  We define the {\it Ulrich index} of $M$, denoted by $\uind_R(M)$, as
the ratio $$\uind_R(M):=\dfrac{e_R(M)}{\mu(M)}.$$
\end{definition}

\begin{remark}
We note that when $M$ is CM, we always have the inequality $e_R(M)\gs \mu(M)$, and therefore $\unind_R(M)\gs 1$. The Maximal Cohen-Macaulay (MCM) $R$-modules such that $\unind_R(M)=1$ are the so-called {\it Ulrich modules}, therefore the Ulrich index provides a measure of how far a module is from being Ulrich.
\end{remark}

The next lemma  is the content of \cite[2.1]{JL07} stated in our terminology (see also \cite[2.1]{JL08}). We remark that, although the original version of part (1) assumes $N$ is  MCM, its proof does not actually requires this condition. If $R$ has a canonical module $\omega_R$, we write  $$M^{\vee}:=\Hom_R(M,\omega_R)$$ for its canonical dual. In the following statement we use the convention $\beta_{-1}^R(N)=0$.

\begin{lemma}[{\cite[2.1]{JL07}, \cite[2.1]{JL08}}]\label{superThm}
Let $R$ be a CM local ring of dimension $d$. Let $M$ and $N$ be $R$-modules and assume $M$ is  CM. Fix $n\in \NN$  and assume one of the following conditions holds
\begin{enumerate}
\item[$(1)$] $n\gs \dim M$ and  $\Tor_i^R(M,N)=0$  for every $n-\dim M\ls i\ls n$, or
\item[$(2)$] $n\gs 0$,  $R$ has a canonical module $\omega_R$, $N$ is MCM, and  $\Ext^i_R\big(M,N^{\vee}\big)=0$ for  every $n+ d-\dim M\ls i\ls n+d$.
\end{enumerate}
Then $\beta_n^R(N)\ls (\unind_R(M)-1)\beta_{n-1}^R(N)$.
\end{lemma}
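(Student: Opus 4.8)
The plan is to reduce everything to a statement about a short exact sequence obtained from a generic Ulrich-type ``Bourbaki-style'' device, and then chase the long exact sequence of $\Tor$ (resp. $\Ext$). First I would reduce to the case where $R$ has infinite residue field, as is done elsewhere in the paper, so that we may choose a minimal reduction and, more importantly, so that a ``generic'' argument is available. The key algebraic input is a short exact sequence of the form
\[
0 \to M' \to R^{\mu(M)} \to M \to 0
\]
where $M'$ is the first syzygy of $M$ with respect to a \emph{minimal} generating set; the point of the hypothesis $e_R(M)\ls 2\mu(M)$—equivalently $\unind_R(M)\le 2$—will be that the relevant homological invariant attached to $M'$ is controlled by $(\unind_R(M)-1)$ times that of $M$. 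Concretely, I expect the heart of the matter to be an inequality of the shape
\[
e_R(M') = e_R(R)\,\mu(M) - e_R(M) \quad\text{vs.}\quad \mu(M') ,
\]
forcing $\unind_R(M') \le \unind_R(M) - 1$ after accounting for the fact that $M'$ is again CM (indeed MCM when $M$ is MCM), and that $\mu(M')=\beta_1^R(M)$.

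Next, the actual computation. For part $(1)$: tensoring the syzygy sequence with $N$ and using that $\Tor_i^R(R^{\mu(M)},N)=0$ for $i\ge1$, the vanishing hypothesis $\Tor_i^R(M,N)=0$ for $n-\dim M\le i\le n$ yields isomorphisms
\[
\Tor_i^R(M',N)\;\cong\;\Tor_{i+1}^R(M,N)\qquad (i\ge 1),
\]
and in particular $\Tor_i^R(M',N)=0$ for $n-\dim M \le i\le n-1 = (n-1)-\dim M' $ up through $n-1$ (using $\dim M' = \dim M$ when $M$ is MCM; in general one tracks the depth carefully). Thus the lemma applies \emph{inductively} to the pair $(M',N)$ with $n$ replaced by $n-1$, giving $\beta_{n-1}^R(N)\le (\unind_R(M')-1)\beta_{n-2}^R(N)$. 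But one then needs to convert this back to a bound on $\beta_n^R(N)$ in terms of $\beta_{n-1}^R(N)$; the cleanest route, and the one I would actually pursue, is to run the argument \emph{without} the intermediate induction: count minimal generators directly. Applying $k\otimes_R-$ to the syzygy sequence and using $\Tor_1^R(M,N)$-type vanishing, one gets that $\beta_n^R(N)$ is the dimension of a subquotient of $\Tor_{n-1}^R(M',N)$, whose ``size'' is bounded by $\mu(M')$-many copies of $\beta_{n-1}^R(N)$; a minimal-free-resolution bookkeeping argument (exactly the kind of degree count underlying \cite[2.1]{JL07}) then gives $\beta_n^R(N)\le (\mu(M')/\mu(M))\cdot\beta_{n-1}^R(N)$... and the multiplicity inequality $e_R(M)\ge \mu(M)$ applied to the syzygy together with $\unind_R(M)\le 2$ is precisely what turns $\mu(M')/\mu(M)$ into $\unind_R(M)-1$. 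Wait—more carefully, $\mu(M') = \beta_1^R(M)$, and one shows $\beta_1^R(M) \le (\unind_R(M)-1)\mu(M)$ using that the Hilbert function of $M$ is bounded below by that of a free module of rank $\unind_R(M)$ minus correction terms; this is where the CM hypothesis on $M$ is essential, via the Artinian reduction, where $\lambda(M/\fm M \text{-stuff})$ becomes a clean linear-algebra count.

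For part $(2)$, I would dualize into the canonical module. Since $N$ is MCM, $N^{\vee\vee}\cong N$ and $\Ext^i_R(M,N^\vee)$ is related by local duality / the canonical dual to $\Tor$-type data; more directly, applying $\Hom_R(-,N^\vee)$ to the syzygy sequence for $M$ and using that $\Ext^i_R(R^{\mu(M)},N^\vee)=0$ for $i\ge 1$, the hypothesis $\Ext^i_R(M,N^\vee)=0$ for $n+d-\dim M\le i\le n+d$ gives $\Ext^i_R(M',N^\vee)\cong\Ext^{i+1}_R(M,N^\vee)$ and one reduces to a Bass-number count for $N^\vee$ that mirrors the Betti-number count of part $(1)$—indeed $\beta_n^R(N) = \mu(\Omega_n N)$ can be read off $N^\vee$ via the canonical duality since $\beta_i^R(N) = \beta_i^R(N^{\vee\vee})$ and there is a spectral-sequence-free identification in this MCM setting. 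The shift by $d$ in the indices is exactly the depth-sensitivity of $\Ext$ versus $\Tor$, bookkept through $\depth M = \dim M$ (CM) and $\depth N^\vee = d$ (MCM implies $N^\vee$ MCM).

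The main obstacle I anticipate is the passage from the vanishing hypothesis over the \emph{non-Artinian} ring $R$ to a clean count: one cannot simply reduce modulo a maximal regular sequence $\underline{x}$ because $N$ need not be CM, so $\underline{x}$ need not be $N$-regular, and tensoring does not commute with the Tor in the naive way. The fix—and the technical core of \cite[2.1]{JL07} that I would be leaning on—is to choose $\underline{x}$ to be simultaneously a maximal $R$-sequence \emph{and} an $M$-sequence (possible since $M$ is CM of the relevant dimension), so that $\Tor_i^R(M,N)\otimes R/(\underline{x}) \cong \Tor_i^{R/(\underline{x})}(M/\underline{x}M, N/\underline{x}N)$ in the relevant range—this is where the length-constraint $n\ge \dim M$ (resp. $n\ge 0$) and the width of the vanishing interval $\dim M + 1$ (resp. $\dim M+1$) are used up, giving exactly enough room to make the reduction valid at level $n$ after it has been validated at level $n-1$. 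Once over the Artinian ring, the inequality $\beta_n\le(\unind_R(M)-1)\beta_{n-1}$ is a direct Hilbert-series manipulation: $\mu(M') = \dim_k(M'/\fm M')$, and $e_R(M) \le 2\mu(M)$ bounds $\dim_k M'$ in the Artinian reduction, hence $\mu(M')$, by $(\unind-1)\mu(M)$, after pushing the bound through the minimal resolution one step.
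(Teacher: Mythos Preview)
The paper does not supply its own proof of this lemma; it defers entirely to \cite[2.1]{JL07} and \cite[2.1]{JL08}. The argument there is structured quite differently from your proposal, and your approach contains a genuine gap.

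The Jorgensen--Leuschke proof runs through the syzygy sequence of $N$, not of $M$. After passing (via an $R$- and $M$-regular sequence of length $\dim M$, which is where the CM hypothesis on $M$ and the width of the vanishing window are spent) to a situation in which $M$ has finite length with $\lambda(M)=e_R(M)$ and the single vanishing $\Tor_n^R(M,N)=0$ survives, one tensors
\[
0 \to \Omega_n^R(N) \to R^{\beta_{n-1}^R(N)} \to \Omega_{n-1}^R(N) \to 0
\]
with $M$. The vanishing $\Tor_n^R(M,N)=\Tor_1^R(M,\Omega_{n-1}^R(N))=0$ makes $M\otimes_R\Omega_n^R(N)\hookrightarrow M^{\beta_{n-1}^R(N)}$ injective, and minimality of the resolution forces the image into $\fm M^{\beta_{n-1}^R(N)}$. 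The inequality is then a one-line length count:
\[
\mu(M)\,\beta_n^R(N)=\mu\bigl(M\otimes_R\Omega_n^R(N)\bigr)\ls\lambda\bigl(M\otimes_R\Omega_n^R(N)\bigr)\ls\lambda(\fm M)\,\beta_{n-1}^R(N)=\bigl(e_R(M)-\mu(M)\bigr)\beta_{n-1}^R(N).
\]

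Your proposal instead takes the syzygy sequence $0\to M'\to R^{\mu(M)}\to M\to 0$ of $M$, and both of the claims you extract from it fail. First, the assertion that $\beta_n^R(N)$ appears as a subquotient of $\Tor_{n-1}^R(M',N)$ bounded by $\mu(M')\beta_{n-1}^R(N)$ is never justified; unwinding it, this is essentially the statement of the lemma itself, so the argument is circular. Second, the auxiliary inequality $\beta_1^R(M)\ls(\uind_R(M)-1)\mu(M)$, i.e.\ $\beta_1^R(M)\ls e_R(M)-\mu(M)$, is false in general: over any non-regular Artinian local ring take $M=k$, where $e_R(k)=\mu(k)=1$ but $\beta_1^R(k)=\mu(\fm)>0$. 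Your reduction paragraph is broadly on target, but the core computation must go through the resolution of $N$; the resolution of $M$ plays no role.
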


The {\it codimension} of $(R,\fm)$ is defined as $\codim(R)=\mu(\fm)-\dim(R)$.  We note that, if $R$ is Artinian, then $\codim(R)$ is simply $\mu(\fm)$. In the following remark we explain the process of reduction to the Artinian case.

\begin{remark}\label{reductions}
Let $R$ be a CM local ring, it is always possible to reduce $R$ to an Artinian ring  with the same codimension and multiplicity in the following way. First, we  extend $R$ to the faithfully flat $R$-algebra $R[X]_{\fm R[X]}$ to assume $R$ has infinite residue field. Then, we  mod out $R$ by a minimal reduction of $\fm$ .
\end{remark}

 We recall that the {\it Loewy length} of an Artinian local ring $R$ is defined as $\elll(R)=\min \{i\mid \fm^i=0\}$. We extend this definition to arbitrary CM local rings as follows.

\begin{definition}\label{Low}
Let $R$ be a CM local ring. We define the {\it Loewy length} of $R$, denoted by $\elll(R)$, as  the maximum of the  Lowey lengths of the Artinian reductions of $R$ as in Remark \ref{reductions}. We remark that when $R$ is equicharacteristic, $\elll(R)$ is achieved when modding out by a general reduction of $\m$ \cite[5.3.3]{Fo06}.
\end{definition}

The following result of Gasharov and Peeva will be crucial to the proofs of the main results of this paper. While their result is stated for Artinian rings,  we include here a version of it for arbitrary CM  local rings. 
We remark that although part (1) is not explicitly stated therein, it is included in the proof of  \cite[2.2]{GP90}.

\begin{prop}[{\cite[2.2]{GP90}, \cite[Proposition 3]{Peeva}}]\label{eventInc}
Let  $R$ be a CM local ring of dimension $d$ and  $M$ be  an $R$-module. Set $c =  \codim(R)$, $e=e(R)$, and $\ell=\elll(R)$. Then for every $n>  \max\{d-\depth M, \mu(M)\}$ we have,
\begin{enumerate}
\item[$(1)$] $\beta_{n}^R(M)\gs c\beta_{n-1}^R(M)-(e-c-\ell+2)\beta_{n-2}^R(M)$,
\item[$(2)$] $\beta_{n}^R(M)\gs (2c-e+\ell-2)\beta_{n-1}^R(M)$, and
\item[$(3)$] If $c\gs 3$ and $2c  -e+\ell - 2=1$,  there either the Betti numbers of $M$ are eventually constant, or there exists  $C>1$ such that $\beta_{n}^R(M)\gs C\beta_{n-1}^R(M)$ for every $n\gg 0$.
\end{enumerate}
\end{prop}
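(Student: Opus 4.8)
\emph{Plan.} The idea is to reduce everything to the Artinian case, where (1) and (2) are (contained in) the results of Gasharov--Peeva and Peeva cited in the statement, and then to extract (3) from (1) and (2) by an elementary analysis of a linear recursion.

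\emph{Step 1: reduction to the Artinian case.} Following Remark~\ref{reductions}, replacing $R$ by $R[x]_{\mathfrak m R[x]}$ leaves $\beta_i^R(M)$, $\mu(M)$, $\depth M$, $d$, $c$, $e$, and $\elll(R)$ unchanged, so we may assume $k$ is infinite. Pick a minimal reduction $J=(x_1,\dots,x_d)$ of $\mathfrak m$ with $\elll(R/J)=\elll(R)=\ell$, available by Definition~\ref{Low}; since $R$ is Cohen--Macaulay, $J$ is an $R$-regular sequence and $A:=R/J$ is Artinian with $\codim A=c$ and $e(A)=e$. To move $M$ down to $A$, replace it by the maximal Cohen--Macaulay syzygy $M':=\Omega_{d-\depth M}^{R}(M)$; then $J$ is $M'$-regular, so $\beta_i^R(M')=\beta_i^{A}(M'/JM')$ for all $i$, while $\beta_n^R(M)=\beta^R_{n-(d-\depth M)}(M')$ for $n>d-\depth M$. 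Thus it suffices to prove (1)--(3) for a module over an Artinian local ring, at indices past its number of generators; along the way one uses that a module of finite projective dimension over an Artinian ring is free (so all three inequalities are trivial for it) and that splitting off a free summand only makes (1) easier and leaves the higher Betti numbers unchanged, so the Artinian module may be assumed to have no free summand.

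\emph{Step 2: the Artinian estimates (1) and (2).} Let $\cdots\to F_n\xrightarrow{d_n}F_{n-1}\to\cdots$ be the minimal free resolution of the (Artinian) module $M$, so $\Omega_iM=\operatorname{im}d_i\subseteq\mathfrak m F_{i-1}$, $\mu(\Omega_i M)=\beta_i=\operatorname{rank}F_i$, and $\mathfrak m^{\ell-1}\Omega_i M=0$. From $0\to\Omega_nM\to F_{n-1}\xrightarrow{d_{n-1}}\Omega_{n-1}M\to0$, restricting $d_{n-1}$ to $\mathfrak m F_{n-1}$ gives $0\to\Omega_nM\to\mathfrak m F_{n-1}\to\mathfrak m\,\Omega_{n-1}M\to0$ (the kernel is $\Omega_nM$ since $\Omega_nM\subseteq\mathfrak m F_{n-1}$), and tensoring with $k$ yields $\beta_n=\mu(\Omega_nM)\ge\mu(\mathfrak m F_{n-1})-\mu(\mathfrak m\,\Omega_{n-1}M)=c\beta_{n-1}-\mu(\mathfrak m\,\Omega_{n-1}M)$. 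The remaining --- and technically central --- point, which is the content of the proof of \cite[2.2]{GP90}, is the bound $\mu(\mathfrak m\,\Omega_{n-1}M)\le(e-c-\ell+2)\beta_{n-2}$: here one exploits that $\Omega_{n-1}M$ is a submodule of $\mathfrak m F_{n-2}$ (with $\operatorname{rank}F_{n-2}=\beta_{n-2}$) killed by $\mathfrak m^{\ell-1}$, together with the length identity $e=1+\sum_{j=1}^{\ell-1}\dim_k(\mathfrak m^{j}/\mathfrak m^{j+1})$, which gives in particular $\dim_k(\mathfrak m^{2}/\mathfrak m^{3})\le e-c-\ell+2$. This yields (1), and (2) is \cite[Proposition~3]{Peeva}; translating back through Step 1, both hold for $\beta_n^R(M)$ whenever $n>\max\{d-\depth M,\mu(M)\}$.

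\emph{Step 3: deducing (3), and the main obstacle.} Assume $c\ge3$ and $2c-e+\ell-2=1$, so $e=2c+\ell-3$ and hence $e-c-\ell+2=c-1$; then (1) and (2) read
\[
\beta_n\ge c\beta_{n-1}-(c-1)\beta_{n-2},\qquad \beta_n\ge\beta_{n-1}
\]
for all $n$ in the range. Put $\gamma_n:=\beta_n-\beta_{n-1}\ge0$; subtracting $\beta_{n-1}$ from the first inequality gives $\gamma_n\ge(c-1)\gamma_{n-1}$. If $\gamma_n=0$ for all such $n$, the Betti numbers are eventually constant. Otherwise $\gamma_{n_0}>0$ for some $n_0$, so $\gamma_n\ge(c-1)^{\,n-n_0}\gamma_{n_0}\to\infty$; telescoping $\beta_n=\beta_{n_0-1}+\sum_{j=n_0}^{n}\gamma_j$ and using $\sum_{j=n_0}^{n}\gamma_j\le\tfrac{c-1}{c-2}\gamma_n$ to bound $\beta_{n-1}$ from above, one gets
\[
\frac{\beta_n}{\beta_{n-1}}\ge 1+\frac{(c-1)\gamma_{n-1}}{\beta_{n_0-1}+\tfrac{c-1}{c-2}\gamma_{n-1}}\xrightarrow[n\to\infty]{}c-1>1,
\]
so $\beta_n\ge C\beta_{n-1}$ for $n\gg0$ with any fixed $C\in(1,c-1)$. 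The hard part is Step~2: faithfully reproducing the Gasharov--Peeva filtration estimate $\mu(\mathfrak m\,\Omega_{n-1}M)\le(e-c-\ell+2)\beta_{n-2}$ --- note that the naive bound $\mu(\mathfrak m\,\Omega_{n-1}M)\le\dim_k(\mathfrak m^{2}F_{n-2}/\mathfrak m^{3}F_{n-2})$ is \emph{false}, since submodules can require more generators than their ambient module --- and verifying it already for $n>\max\{d-\depth M,\mu(M)\}$ rather than merely asymptotically, while keeping the Step~1 bookkeeping consistent (the degree shift from the maximal Cohen--Macaulay syzygy and the invariance of $c$, $e$, $\ell$ under the reduction). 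Given (1) and (2), Step~3 is entirely elementary.
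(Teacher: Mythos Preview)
Your proposal is essentially correct and follows the paper's approach. The paper does not supply a proof of this proposition; it simply cites \cite[2.2]{GP90} and \cite[Proposition~3]{Peeva}, with the remark that (1) is implicit in the proof of \cite[2.2]{GP90} and that the passage from CM to Artinian proceeds via Remark~\ref{reductions}. Your Step~1 makes this reduction explicit, your Step~2 sketches the Gasharov--Peeva argument behind (1) while correctly deferring the key filtration bound $\mu(\mathfrak m\,\Omega_{n-1}M)\le(e-c-\ell+2)\beta_{n-2}$ to \cite{GP90}, and (2) is cited to Peeva just as in the paper.

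The one genuine addition is your Step~3: rather than citing \cite[Proposition~3]{Peeva} for (3), you derive it directly from (1) and (2) via the auxiliary sequence $\gamma_n=\beta_n-\beta_{n-1}$ and the recursion $\gamma_n\ge(c-1)\gamma_{n-1}$. This is a clean elementary argument and is correct (your limit computation gives $\liminf \beta_n/\beta_{n-1}\ge c-1\ge 2$). Your own caveat about the index bound is well taken: passing to the MCM syzygy $M'=\Omega_{d-\depth M}^R(M)$ replaces $\mu(M)$ by $\mu(M')=\beta_{d-\depth M}^R(M)$, so the precise threshold $n>\max\{d-\depth M,\mu(M)\}$ does not fall out of your reduction as stated. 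This is a bookkeeping technicality rather than a gap in the mathematics; every application of the proposition in the paper (Theorems~\ref{main} and~\ref{maingengolod}, Lemma~\ref{fkCom}) uses only $n\gg 0$, for which your argument is complete.
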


We now discuss a class of rings introduced by Avramov in \cite{Avr94}. 

\subsection{Generalized Golod rings.}\label{gGsection}  An {\it acyclic closure} of $R$ is a DG-algebra resolution  of $k$ constructed via Tate's process of adjoining variables  to kill cycles \cite{Tate}.   
The process starts with the Koszul complex $K^R=R\langle X_1\rangle$, and, for $n\gs 1$,  it inductively adjoins variables $X_{n+1}$ in homological degree $n+1$ in such a way that the classes of $\partial(X_{n+1})$ minimally generate the homology $H(R\langle X_{\ls n} \rangle)$. Here,  $X_n$ are exterior variables if $n$ is odd and  divided powers variables if $n$ is even.
 Setting $X=\bigcup_{n\gs 1} X_n$, the resulting acyclic closure $R\langle X\rangle$ is a minimal free resolution of $k$ (\cite{Gu, Scho}). We refer the reader to \cite[Section 6.3]{Avr} for more information.

In \cite{Avr94}, Avramov defines a local ring $R$ to be {\it generalized Golod $($of level $\ls n)$} if the DG-algebra $R\langle X_{\ls n}\rangle$ admits a {\it trivial Massey operation} for some $n\gs 1$ (see \cite[5.2.1]{Avr}). We notice that the classical Golod rings are precisely  the generalized Golod rings of level $\ls 1$. One of the main motivations for introducing this class of rings is the following theorem.

\begin{thm}[{\cite[1.5]{Avr}}]\label{AvrDen}
Let $R$ be a generalized Golod ring, then there exists  a polynomial $\DenRt\in \ZZ[t]$ such that for every $R$-module $M$ there exists  $p_M(t)\in \ZZ[t]$ giving $$ P_M^R(t)=\frac{p_M(t)}{\DenRt}.$$
 Moreover, when $M=k$,  all the roots of $p_k(t)$ have  magnitude one.
\end{thm}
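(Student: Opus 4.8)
The statement to prove is Theorem~\ref{AvrDen}: if $R$ is generalized Golod (of level $\ls n$ for some $n$), then there is a polynomial $\DenRt$ such that $P_M^R(t) = p_M(t)/\DenRt$ for every $R$-module $M$, with a normalization condition on the roots of $p_k(t)$. The plan is to extract the numerator and denominator directly from the bar-type construction of the minimal free resolution that the trivial Massey operation provides on the truncated acyclic closure $R\langle X_{\ls n}\rangle$.

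First, I would recall the setup from \cite{Avr94}: write $A = R\langle X_{\ls n}\rangle$ for the truncated acyclic closure, a DG-$R$-algebra with $H_0(A)=k$; since $R$ is generalized Golod of level $\ls n$, the augmentation ideal $A_{\gs 1}$ (equivalently $H_{\gs 1}(A)$ read through the Massey structure) carries a trivial Massey operation. The key structural input is that this produces, for \emph{every} $R$-module $M$, a minimal free resolution of $M$ built as a ``twisted tensor'' of a minimal free resolution of $M$ over $A$-in-low-degrees against the tensor coalgebra on $H_{\gs 1}(A)$ — exactly the mechanism Golod used, generalized by Avramov. Passing to Poincar\'e series, this construction is additive/multiplicative in the appropriate sense and yields a closed form
\[
P_M^R(t) = \frac{p_M(t)}{1 - \sum_{j\gs 1} \dim_k H_j(A)\, t^{j+1}},
\]
so one sets $\DenRt := 1 - \sum_{j=1}^{?} \dim_k H_j(A) t^{j+1}$, a fixed polynomial independent of $M$, and $p_M(t)$ is the (polynomial, since $A$ is finite in each degree and bounded homologically in the relevant range) numerator coming from the $A$-side resolution of $M$. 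I would cite \cite{Avr94} and \cite[Section~5]{Avr} for the precise form of this formula rather than rederiving the Massey-operation bookkeeping.

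Next, for the ``moreover'' clause, I would specialize to $M=k$. Here the $A$-side contribution is trivial in the sense that the resolution of $k$ is just the acyclic closure itself, so $p_k(t)$ is controlled by the variables $X_{\ls n}$ and the comparison of $\DenRt$ with the full Poincar\'e series $P_k^R(t)$. Using that $P_k^R(t)\cdot(\text{something with nonnegative coefficients})$ relations hold for Golod-type rings, together with the fact (Avramov) that $P_k^R(t)$ and $\DenR(t)$ are linked by an identity forcing $p_k(t)$ to be a product of cyclotomic-type factors, one concludes all roots of $p_k(t)$ have magnitude one. Concretely, the cleanest route is: $\DenRt = p_k(t) \cdot \big(\text{inverse of } P_k^{R\langle X_{\ls n}\rangle\text{-part}}\big)$, and since the Poincar\'e series of $k$ over $R$ equals $\prod_{i}(1+t^{2i-1})^{\epsilon_i}/\prod_i(1-t^{2i})^{\epsilon_i}$-type Euler-factor shape truncated and corrected by $\DenRt$, all the finite ``correction'' must be a polynomial with unimodular roots. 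Again I would lean on \cite[1.5]{Avr94} / \cite[Theorem~5.2.2 and Section~5.2]{Avr} for this last normalization.

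\textbf{Main obstacle.} The genuine work is \emph{not} reproving the Golod/generalized-Golod resolution formula — that is classical and citable — but rather verifying that $p_M(t)$ is an honest polynomial (finite degree) rather than a formal power series, uniformly in $M$. This requires knowing that a minimal $A$-free resolution of $M$, where $A = R\langle X_{\ls n}\rangle$ is the \emph{truncated} acyclic closure, is eventually zero, i.e.\ that $M$ has finite projective dimension over $A$ in the relevant graded sense; this follows because $A$ agrees with a finite stage of Tate's construction and $H_{\gs n}(A)$ is built in, but making the degree bound on $p_M(t)$ precise (and showing it does not grow with $M$ beyond $\mu(M)$-type control) is the step that needs care. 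Since the excerpt permits me to assume the statement of Theorem~\ref{AvrDen} itself downstream, for the purposes of this paper the honest content is just the clean citation and the reduction to Avramov's construction; I would therefore present the proof as: recall the trivial Massey operation, invoke \cite{Avr94} to get the resolution and hence the rational expression with the displayed common denominator, and invoke \cite[1.5]{Avr94} again for the unimodularity of the roots of $p_k(t)$.
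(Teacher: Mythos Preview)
The paper does not prove Theorem~\ref{AvrDen}; it is stated as a citation of Avramov's result \cite[1.5]{Avr94} and used as a black box. There is therefore nothing to compare against beyond the fact that your instinct---to defer to Avramov's construction rather than reprove it---matches exactly what the paper does.

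That said, your sketch contains a technical inaccuracy worth flagging. The displayed denominator
\[
1 - \sum_{j\gs 1} \dim_k H_j(A)\, t^{j+1}
\]
is the \emph{classical} Golod formula (level $n=1$, where $A=K^R$ is the Koszul complex). For generalized Golod of level $\ls n$ with $A=R\langle X_{\ls n}\rangle$, the Poincar\'e series of $k$ takes the shape
\[
P_k^R(t)=\frac{\prod_{i\ls n,\ i\text{ odd}}(1+t^i)^{|X_i|}}{\prod_{i\ls n,\ i\text{ even}}(1-t^i)^{|X_i|}\cdot\bigl(1-t\sum_{j\gs 1}\dim_k H_j(A)\,t^{j}\bigr)},
\]
so $\DenRt$ absorbs both the even-degree factors and the bar-type term, while $p_k(t)$ is the product of the odd-degree factors $(1+t^i)^{|X_i|}$---visibly a polynomial with all roots on the unit circle. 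Your ``Euler-factor shape'' remark in the last paragraph gestures at this, but the earlier displayed formula does not. For general $M$, the polynomiality of $p_M(t)$ comes from the fact that the Avramov resolution expresses $P_M^R(t)$ as a finite $A$-side contribution tensored against the bar construction on $H_{\gs 1}(A)$; this is precisely the content of \cite[1.5]{Avr94}, and there is no elementary shortcut. Since the paper itself only cites the result, the cleanest fix is to drop the sketch entirely and match the paper: state the theorem and cite \cite{Avr94}.
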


We now present some classes of rings that are generalized Golod.

\begin{example}\label{gGolod} The local ring $R$ is generalized Golod in any of the following situations:
\begin{enumerate}
\item $R$ is a complete intersection (\cite{Tate}).
\item $R$ is Golod (\cite{GL}).
\item $\mu(\fm)-\depth(R)\ls 3$ (\cite[6.4]{AKM}), see also  \cite[3.5]{Avr89}.
\item $R$ is Gorenstein and $\mu(\fm)-\depth(R)\ls 4$  (\cite[6.4]{AKM}), see also \cite[3.5]{Avr89}.
\item $R$ is one link from a complete intersection (\cite[6.4]{AKM}).
\item $R$ is two links from a complete intersection  and $R$ is Gorenstein (\cite[6.4]{AKM}).
\item $R$ is almost a complete intersection of codimension four, and 2 is a unit in $R$ (\cite[4.2]{KP}).
\item $R$ is presented by Huneke-Ulrich ideals of full codimension (\cite[5.2]{K}).
\item $R$ is Gorenstein and $e(R)\ls 11$ (\cite[6.9]{Gupta}).
\item $R$ is Gorenstein, $\fm^4=0$, and $\mu(\fm^2)\ls 4$ (\cite[6.9]{Gupta}).
\item $R$ is presented by certain determinantal ideals of full codimension (\cite[6.5]{AKM}).
\item $R$ is a CM {\it stretched} ring, or an {\it almost stretched} Gorenstein ring (\cite[5.4]{CDGetc}, \cite[6.1]{Gupta}).
\item Certain {\it compressed} Artinian rings  (\cite[5.1]{RS}, \cite[7.1]{KSV}).
\end{enumerate}
\end{example}

The following invariant has been studied by other authors as it  measures the growth of Betti numbers of  modules (\cite[Section 4]{Avr}).  We  give this invariant the following name and notation for  clarity of our exposition.

\begin{definition}
Let  $M$ be an $R$-module of infinite projective dimension. We define the {\it limit ratio} of $M$, denoted by $\lr_R(M)$, as the formula $$\lr_R(M):=\limsup_{n\to \infty}\frac{\beta_{n+1}^R(M)}{\beta_{n}^R(M)}.$$
\end{definition}
Clearly $\lr_R(M)\gs 1$. We remark that while it is unknown if $\lr_R(M)$ is always finite (cf. \cite[4.3.1]{Avr}), this is indeed the case when $R$ is a CM local ring (\cite[4.2.6]{Avr}). The limit ratio is naturally related to the {\it curvature} of $M$ 
 (see paragraph before \cite[4.3.6]{Avr}), 
 and to the   {\it complexity} of  $M$,  i.e,  
 $$\cx_R(M)=\inf\{t\in\NN\mid \text { there exists }\beta\in\RR\text{ such that }\beta_n^R(M)\ls \beta n^{t-1} \text{ for every } n\gs 1\}.$$
Note that, by definition, $\cx_R(M)=0$ if and only if $M$ has finite projective dimension. 

In fact, in  Problems 4.3.6 and 4.3.9 of  \cite{Avr}, Avramov  asks whether the limit in the definition of $\lr_R(M)$ always exists. 
By a result of Sun, this limit exists for  modules  of infinite complexity over a generalized Golod ring.

 \begin{prop}[{\cite[Corollary]{Sun98}}]\label{ratLimExists}
Let $R$ be a generalized Golod ring and $M$ be  an $R$-module. 
Assume $\cx_R(M)=\infty$. 
Then the limit $\displaystyle\lim_{n\to \infty}\frac{\beta_{n+1}^R(M)}{\beta_{n}^R(M)}$ exists, and is greater than $1$.
 \end{prop}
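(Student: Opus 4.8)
The plan is to reduce the statement to an analysis of the rational generating function furnished by Theorem \ref{AvrDen}. Since $R$ is generalized Golod we may write $P_M^R(t)=p_M(t)/\DenRt$ with $p_M(t),\DenRt\in\ZZ[t]$, and after cancelling common factors we may assume this expression is in lowest terms. The sequence $(\beta_n^R(M))_n$ then satisfies a linear recurrence with constant coefficients whose characteristic polynomial is the reciprocal of the reduced denominator; in particular $P_M^R(t)$ is meromorphic with finite, positive radius of convergence $\rho$, and $\beta_n^R(M)$ grows like a polynomial times an exponential. I would first observe that $\cx_R(M)=\infty$ forces $\rho<1$: if $\rho>1$ then $\beta_n^R(M)\to 0$, forcing finite projective dimension, while if $\rho=1$ all poles of $P_M^R$ lie on the unit circle and the Betti numbers grow polynomially, contradicting infinite complexity. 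Since any limit $L=\lim_n\beta_{n+1}^R(M)/\beta_n^R(M)$, if it exists, must equal $\lim_n(\beta_n^R(M))^{1/n}=1/\rho$ by Cauchy--Hadamard, the value $1/\rho>1$ is automatic, and the real content of the proposition is the \emph{existence} of the limit.

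Next I would carry out the partial fraction decomposition, which gives $\beta_n^R(M)=\sum_j Q_j(n)\lambda_j^{\,n}$, where the $\lambda_j$ are the reciprocals of the poles of $P_M^R(t)$ and each $Q_j$ is a polynomial of degree one less than the multiplicity of the corresponding pole. Set $\lambda:=1/\rho=\max_j|\lambda_j|$. Because every $\beta_n^R(M)$ is nonnegative (indeed positive, since $\pd_R M=\infty$), Pringsheim's theorem applies to the power series $P_M^R(t)$: it must be singular at the positive real point $t=\rho$. Hence $\lambda_0:=1/\rho$ occurs among the $\lambda_j$ of maximal modulus and is real and positive. If I can show that $\lambda_0$ \emph{strictly dominates}, i.e. that among the reciprocal poles of modulus $\lambda$ the one $\lambda_0$ has strictly largest multiplicity (equivalently, $Q_0(n)\lambda_0^n$ grows strictly faster than every other $Q_j(n)\lambda_j^n$ with $|\lambda_j|=\lambda$), then dividing $\beta_n^R(M)$ by $\lambda_0^n n^{\deg Q_0}$ shows the quotient tends to the leading coefficient of $Q_0$; it follows that $\beta_{n+1}^R(M)/\beta_n^R(M)\to\lambda_0=1/\rho>1$, as desired.

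The crux, and the step I expect to be the main obstacle, is exactly establishing this strict dominance of the real positive reciprocal pole. Rationality and nonnegativity of the coefficients alone do not suffice: such a series can have a conjugate pair of poles of the same minimal modulus and multiplicity as the Pringsheim pole, making $\beta_{n+1}/\beta_n$ oscillate with no limit. Ruling this out must use the generalized Golod hypothesis through the precise shape of the common denominator $\DenRt$ determined in \cite{Avr94}; the plan is to prove that the pole of $P_M^R$ of smallest modulus is always simple and is the unique pole of that modulus — a Perron--Frobenius--type property of $\DenRt$. One natural route is to relate the poles of $P_M^R(t)$ lying inside the unit disk to the roots of $\DenRt$ inside the unit disk (every root of the reduced denominator of $P_M^R$ is a root of $\DenRt$), and to use the second assertion of Theorem \ref{AvrDen} — that all roots of $p_k(t)$ have magnitude one — to identify the small-modulus poles of $P_k^R(t)=p_k(t)/\DenRt$ with the small-modulus roots of $\DenRt$ themselves, after which the Perron-type statement can be argued directly for $\DenRt$. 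Once strict dominance is in hand, the bookkeeping of the previous paragraph is routine.
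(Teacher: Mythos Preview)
The paper does not prove this proposition; it is quoted from Sun \cite{Sun98}. Your overall strategy---pass to the rational expression from Theorem~\ref{AvrDen}, do a partial-fraction analysis, use Pringsheim to locate a positive real pole on the circle of convergence, and show it strictly dominates---is the natural one and is essentially Sun's approach. The gap is exactly where you place it, but the route you sketch to close it does not succeed using only the information recorded in Theorem~\ref{AvrDen}. Knowing that $p_k(t)$ has all its roots on the unit circle does identify the poles of $P_k^R$ inside the open disk with the roots of $\DenRt$ there, but positivity of the coefficients of $P_k^R$ together with this identification is \emph{not} enough to force a unique (or dominant) minimal-modulus root of $\DenRt$. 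A concrete obstruction: take $D(t)=1-4t^2$ and $p(t)=2(1+t)$. Then $p(t)/D(t)=\sum_{n\gs 0}a_nt^n$ has $a_{2n}=a_{2n+1}=2\cdot4^n>0$, the unique numerator root $t=-1$ lies on the unit circle, the growth is exponential, yet $a_{n+1}/a_n$ oscillates forever between $1$ and $4$. So the hypotheses you have extracted (rationality, integer denominator, positive coefficients, numerator roots on the unit circle) are jointly insufficient for the Perron-type conclusion, and ``argue the Perron-type statement directly for $\DenRt$'' is, as written, a restatement of the goal rather than a method.

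There is also a second issue you do not address: even granting a unique minimal-modulus root $\rho_0$ of $\DenRt$, you must still rule out $p_M(\rho_0)=0$ when $\cx_R(M)=\infty$, for otherwise the minimal-modulus pole of $P_M^R$ moves outward to a larger circle on which uniqueness may fail. This amounts to a curvature-gap statement ($\curv_R(M)=\curv_R(k)$ whenever the complexity is infinite) and is not automatic from what you have written. What actually closes both issues is the \emph{explicit} structure of $\DenRt$ for generalized Golod rings established in \cite{Avr94}, not merely the existence-plus-$p_k$ clause recorded here as Theorem~\ref{AvrDen}; once that structural input is imported, your outline becomes a proof.
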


\subsection{MCM approximations.} 

The following result of Auslander and Buchweitz, and the subsequent remark, allow us to often replace arbitrary finitely generated modules for MCM modules when dealing with vanishing of Ext. 

\begin{thm}[{\cite[Theorem A]{AB}}]\label{MCMaprox}
Let $R$ be a CM local ring with canonical module $\omega_R$ and let $N$ be an $R$-module. Then there exist $R$-modules $Y$ and $L$, such that  $L$ is MCM, $Y$ has finite injective dimension, and  they fit in a short exact sequence $0 \to Y \to L \to N \to 0$.
\end{thm}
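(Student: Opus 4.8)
\textbf{Proof strategy for Theorem \ref{MCMaprox}.} The plan is to build the short exact sequence $0 \to Y \to L \to N \to 0$ by an explicit construction driven by the canonical module, which acts as a cogenerator for the category of modules of finite injective dimension. I would first reduce to the case that $N$ is maximal Cohen--Macaulay after finitely many steps: set $t = \depth R - \depth N = \dim R - \depth N$ and take a minimal free resolution of $N$ truncated at the $t$-th syzygy, so that $\Omega_t^R(N)$ is MCM. If one can solve the problem for MCM modules, then lifting an approximation of $\Omega_t^R(N)$ back up the resolution using the horseshoe lemma together with the fact that finite free modules have finite injective dimension (as $R$ is CM with canonical module, $\id_R R = \dim R < \infty$ forces $R$ Gorenstein, so instead one uses that syzygies of finite-injective-dimension modules by free modules stay controlled) — more carefully, one performs an induction on $t$: given $0 \to Y' \to L' \to \Omega_1^R(N) \to 0$ with $L'$ MCM and $Y'$ of finite injective dimension, and given $0 \to \Omega_1^R(N) \to R^{b} \to N \to 0$, form the pullback to obtain $0 \to Y' \to E \to R^b \to 0$ and $0 \to L' \to E \to N \to 0$; since $Y'$ has finite injective dimension and $R^b$ has finite injective dimension precisely when $R$ is Gorenstein, this naive approach needs adjustment, so instead I would take the direct route below and keep the induction only as a bookkeeping device.

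\textbf{The core construction (MCM case).} Assume $N$ is MCM of dimension $d = \dim R$. Consider $\Ext^{i}_R(N, \omega_R)$ for $i > 0$: since $N$ is MCM and $\omega_R$ is the canonical module, these all vanish, i.e. $N$ is \emph{$\omega_R$-reflexive} with $N^{\vee\vee} \cong N$ and $N^\vee = \Hom_R(N,\omega_R)$ also MCM. Now take a minimal free presentation of $N^\vee$, say $R^{a} \xrightarrow{\varphi} R^{b} \to N^\vee \to 0$, apply $(-)^\vee = \Hom_R(-,\omega_R)$, and use $\Hom_R(R^m,\omega_R) = \omega_R^m$ together with $\Ext^1_R(N^\vee,\omega_R) = 0$ to obtain an exact sequence $0 \to N \to \omega_R^{b} \xrightarrow{\varphi^\vee} \omega_R^{a}$. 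Let $Z = \coker(\varphi^\vee)$ so $0 \to N \to \omega_R^b \to \omega_R^a \to Z \to 0$; truncating, $\omega_R^b \to \omega_R^a$ has image of finite injective dimension, but I want an exact sequence with $N$ on the right, so I would instead dualize in the other order or — cleaner — use the following: since $N$ is $\omega_R$-reflexive, write $N = (N^\vee)^\vee$ and take a \emph{free} (not $\omega_R$-free) cover of $N^\vee$ in the category of MCM modules; alternatively, the standard Auslander--Buchweitz argument produces $L$ and $Y$ as follows. Choose a surjection from a suitable MCM module; concretely, since $\Ext^{>0}_R(k,\omega_R)$ is concentrated in degree $d$, one shows that for \emph{any} MCM module $N$ there is a surjection $\omega_R^n \twoheadrightarrow N'$ realizing $N$ as a quotient after one syzygy step, and iterating the pair (MCM cover, finite-injective-dimension kernel) stabilizes. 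I would follow \cite{AB} in taking $L$ to be the MCM module obtained from a finite resolution of $N$ by copies of $\omega_R$ — precisely, resolve $N$ minimally by $\omega_R$'s: $\cdots \to \omega_R^{n_1} \to \omega_R^{n_0} \to N \to 0$; since $\omega_R$ is a (relative) injective cogenerator for MCM modules and $R$ is CM of dimension $d$, the $d$-th kernel $W$ in this resolution has finite injective dimension, while the truncated complex $0 \to W \to \omega_R^{n_{d-1}} \to \cdots \to \omega_R^{n_0}$ splices with $N$; the subtle point is to reorganize this into a \emph{single} short exact sequence $0 \to Y \to L \to N \to 0$ with $L$ MCM. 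This is achieved by the dual construction: resolve $N^\vee$ by finite free modules, $0 \to \Omega^d(N^\vee) \to R^{n_{d-1}} \to \cdots \to R^{n_0} \to N^\vee \to 0$ with $\Omega^d(N^\vee)$ MCM, apply $(-)^\vee$, and use that $\Ext^i_R(\Omega^j(N^\vee),\omega_R) = 0$ for $i>0$ when $\Omega^j(N^\vee)$ is MCM to get $0 \to N^{\vee\vee} \to \omega_R^{n_0} \to \cdots \to \omega_R^{n_{d-1}} \to (\Omega^d N^\vee)^\vee \to 0$ exact with $N^{\vee\vee}\cong N$; then $L := (\Omega^d N^\vee)^\vee$ is MCM and the module $Y$ defined by the remaining part of the complex, spliced appropriately, has finite injective dimension because it is resolved on the left by a finite complex of $\omega_R$'s and $\id_R \omega_R = d < \infty$. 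Collapsing the finite exact complex $0 \to N \to \omega_R^{n_0} \to \cdots \to \omega_R^{n_{d-1}} \to L \to 0$ into a two-term extension is then standard homological algebra: break it into short exact sequences, and use that an extension of a finite-injective-dimension module by a finite-injective-dimension module again has finite injective dimension, eventually landing on $0 \to Y \to L \to N \to 0$ after re-splicing from the $N$ end.

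\textbf{Main obstacle.} The delicate point is the claim that the $d$-th cokernel $Y$ in an $\omega_R$-coresolution of $N$ has \emph{finite} injective dimension, and that the whole finite exact sequence collapses to length one. The mechanism is that $\omega_R$ has finite injective dimension (indeed $\id_R \omega_R = d$), so each $\omega_R^{n_j}$ has finite injective dimension; a finite exact complex $0 \to N \to I^0 \to I^1 \to \cdots \to I^{d-1} \to L \to 0$ with all $I^j$ of finite injective dimension shows, by repeatedly splitting off short exact sequences and invoking the long exact sequence for $\Ext^{\gg 0}(k,-)$, that $\Ext^{\gg 0}_R(k, L)$ is determined by $\Ext^{\gg 0}_R(k,N)$ shifted — so one must instead run the construction so that $L$ is MCM \emph{by fiat} (as $(\Omega^d N^\vee)^\vee$) and then $Y$ inherits finite injective dimension from being caught between finitely many copies of $\omega_R$. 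I would handle this by the dual/``free resolution of $N^\vee$ then apply $(-)^\vee$'' route described above, which makes MCM-ness of $L$ automatic and reduces the finite-injective-dimension claim for $Y$ to the single fact $\id_R\omega_R < \infty$ plus the two-out-of-three property for finite injective dimension in short exact sequences. The only real work beyond these formal manipulations is checking exactness of the dualized complex, which follows from $\Ext^i_R(M,\omega_R) = 0$ for $i>0$ and $M$ MCM — a property of the canonical module over a CM local ring — and the vanishing of $\Ext^i_R(\Omega^j(N^\vee),\omega_R)$, valid because each $\Omega^j(N^\vee)$ with $j \geq d - \depth(N^\vee) = 0$ is already MCM.
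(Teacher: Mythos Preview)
The paper does not prove this theorem: it is quoted verbatim as \cite[Theorem~A]{AB} and used as a black box. So there is no ``paper's approach'' to compare against; I can only assess your sketch on its own merits.

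Your core construction has a genuine orientation error. Dualizing a free resolution of $N^{\vee}$ (for $N$ MCM) produces the exact complex
\[
0 \to N \to \omega_R^{n_0} \to \cdots \to \omega_R^{n_{d-1}} \to L \to 0,
\]
with $N$ on the \emph{left}. Breaking this into short exact sequences and re-splicing can only yield a sequence of the form $0 \to N \to Y' \to L'' \to 0$ (a finite-injective-dimension \emph{hull}), never $0 \to Y \to L \to N \to 0$. There is no homological manipulation that moves $N$ from kernel to cokernel here; the claim that one ``eventually lands on $0 \to Y \to L \to N \to 0$'' is false. Indeed, for $N$ already MCM the approximation is trivial: take $L=N$ and $Y=0$. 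All of the content lies in the non-MCM case.

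Your inductive reduction is where the real work should happen, but the ``pullback'' you describe does not exist in general. Given $0 \to Y' \to L' \to \Omega_1^R(N) \to 0$ and $0 \to \Omega_1^R(N) \to R^b \to N \to 0$, an $E$ fitting into both $0 \to Y' \to E \to R^b \to 0$ and $0 \to L' \to E \to N \to 0$ exists only if the Yoneda $2$-extension $0 \to Y' \to L' \to R^b \to N \to 0$ is trivial in $\Ext^2_R(N,Y')$, which you have not arranged. Worse, trace through the base case: if $\Omega_1^R(N)$ is already MCM (so $Y'=0$, $L'=\Omega_1^R(N)$), your $E$ is $R^b$ and your ``approximation'' of $N$ becomes $0 \to \Omega_1^R(N) \to R^b \to N \to 0$, whose kernel $\Omega_1^R(N)$ is MCM but has no reason to have finite injective dimension. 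The missing ingredient is the hull: at each inductive step one must embed the MCM module $L'$ into a direct sum of copies of $\omega_R$ (via $0 \to L' \to \omega_R^a \to L'' \to 0$ with $L''$ MCM), and then form the cokernel of the combined map $L' \to R^b \oplus \omega_R^a$. A nine-lemma argument then produces $0 \to Y \to L \to N \to 0$ with $L$ MCM (as an extension of $L''$ by $R^b$) and $Y$ sitting in $0 \to Y' \to \omega_R^a \to Y \to 0$, hence of finite injective dimension. Without this hull step, the construction does not go through.
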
 

The exact sequence from the previous lemma is commonly referred as an {\it MCM  approximation} of $N$. 

\begin{remark} \label{repl}
We note that, since $Y$ in Theorem \ref{MCMaprox} has finite injective dimension, for every $R$-module $M$ we have $\Ext^i_R(M,N)=0$ for $i\gg 0$ if and only if $\Ext^i_R(M,L)=0$ for $i\gg 0$. In particular, if $M=k$ , then $N$ has finite injective dimension if and only if $L$ does. 
\end{remark}

We now recall another  notion of complexity. The {\it plexity}  of $N$ is defined in terms of its {\it Bass numbers}, i.e., $\mu_R^i(N)=\dim_k \Ext_R^i(k,N)$. We have, 
$$\px_R(N)=\inf\{t\in\NN\mid \text { there exists }\mu\in\RR\text{ such that }\mu^n_R(N)\ls \mu n^{t-1} \text{ for every } n\gs 1\}.$$
For more information about these notions of complexity, see \cite[Section 4.2]{Avr} and \cite{Avr96}. There is a direct relation between the two complexities, we discuss this in the following remark.

\begin{remark}\label{sameCx}
Let $R$ be a CM local ring of dimension $d$ and with a canonical module $\omega_R$. Let $0 \to Y \to L \to N \to 0$ be an MCM approximation of $N$  (cf. Theorem \ref{MCMaprox}), then 
$$\mu^{i+d}_R(N) = \beta_{i}^R((L)^\vee) \text{ for }i> 0,\qquad \text{and then},\qquad \px_R(N)=\cx_R((L)^\vee).$$ 
To see this, choose $\underline{x}=x_1,\ldots, x_d$ a maximal regular sequence on $R$, $L$, and $(L)^\vee$. The claim now follows by observing that 
\begin{align*}\label{relateBass}
\mu^{i+d}_R(N) =\mu^{i+d}_R(L) &=\mu^{i}_{R/\underline{x}R}(L/\underline{x}L)\\
&=\beta_i^{R/\underline{x}R}\big( (L/\underline{x}L) ^\vee\big)=\beta_i^{R/\underline{x}R}\big( (L)^\vee/\underline{x}(L)^\vee\big)=\beta_i^R((L)^\vee)
\end{align*}
for every $i> 0$, where the third equality  holds by Matlis duality. 
\end{remark}

\section{Trivial vanishing}\label{tvS}

In this section we present the definition of the trivial vanishing condition for CM local rings and prove some preliminary results.

\begin{definition}\label{tv}
A CM local  ring  $R$ satisfies {\it trivial Tor-vanishing} if, for any $R$-modules  $M$ and $N$,  $\Tor_i^R(M,N)=0$ for $i\gg 0$ implies  $M$ or $N$ has finite projective dimension. We say $R$ satisfies  {\it trivial Ext-vanishing}  if   $\Ext^i_R(M,N)=0$ for $i\gg 0$ implies $M$ has finite projective dimension or $N$ has finite injective dimension. If both conditions are satisfied, we simply say $R$ satisfies {\it trivial vanishing}. 
\end{definition}

The relation between the two trivial vanishing conditions is explained by the following result.

\begin{thm}\label{equiv}
Let $R$ be a CM local  ring. 
\begin{enumerate}
\item[$(1)$] If  $R$ satisfies trivial Tor-vanishing, then $R$ satisfies trivial vanishing.
\item[$(2)$] If $R$ has a canonical module, the two trivial vanishing conditions $($Ext and Tor$)$ are equivalent.  
\end{enumerate}
\end{thm}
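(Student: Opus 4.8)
The plan is to deduce both parts from a single duality input: if $R$ has a canonical module $\omega_R$, then for any $R$-module $N$ one has an MCM approximation $0\to Y\to L\to N\to 0$ with $Y$ of finite injective dimension (Theorem \ref{MCMaprox}), and Remark \ref{repl} lets us replace $N$ by $L$ when testing asymptotic vanishing of $\Ext$. For an MCM module $L$ the canonical dual $L^\vee$ is again MCM, $(L^\vee)^\vee\cong L$, and there is the standard adjunction/duality isomorphism $\Ext^i_R(M,L)\cong\Ext^i_R(M,\Hom_R(L^\vee,\omega_R))\cong$ (after fixing a maximal regular sequence and passing to the Artinian reduction, as in Remark \ref{sameCx}) a shift of $\Tor$-modules, concretely $\Ext^i_R(M,L^\vee)\cong \Tor^R_{?}(M,L)$-type identities; more precisely the clean statement I will use is that $\Ext^i_R(M,N^\vee)\cong \Tor_i^R(M,N)^\vee$ for $N$ MCM and $M$ arbitrary (this is the form already invoked implicitly in Lemma \ref{superThm}(2) and is proved by dualizing a free resolution of $M$ into $\omega_R$, using that $\Ext^{>0}_R(-,\omega_R)$ vanishes on free modules and that $N$ being MCM kills the higher $\Ext$ obstructions). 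Finiteness of projective dimension of $M$ is a self-dual hypothesis here, and $N$ has finite injective dimension if and only if its MCM approximation $L$ does, if and only if $L^\vee$ has finite projective dimension (Matlis/canonical duality over the Artinian reduction).

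For part (2), assume $R$ has a canonical module and satisfies trivial Tor-vanishing; I want trivial Ext-vanishing. Suppose $\Ext^i_R(M,N)=0$ for $i\gg 0$. By Remark \ref{repl} replace $N$ by the MCM module $L$ from its MCM approximation, so $\Ext^i_R(M,L)=0$ for $i\gg 0$. Using the duality isomorphism $\Ext^i_R(M,L)\cong\Tor^R_i(M,L^\vee)^\vee$ (valid since $L^\vee$ is MCM and $L=(L^\vee)^\vee$), we get $\Tor^R_i(M,L^\vee)=0$ for $i\gg 0$. Trivial Tor-vanishing then forces $M$ or $L^\vee$ to have finite projective dimension; in the first case we are done, and in the second case $L^\vee$ has finite projective dimension, hence $L$ has finite injective dimension, hence (Remark \ref{repl} again) $N$ has finite injective dimension. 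Conversely, assuming trivial Ext-vanishing, suppose $\Tor^R_i(M,N)=0$ for $i\gg 0$; I replace $N$ by a high syzygy (harmless for asymptotic Tor) to assume $N$ is MCM, so $N=(N^\vee)^\vee$ with $N^\vee$ MCM, and the same duality gives $\Ext^i_R(M,N^\vee)^\vee\cong\Tor^R_i(M,N)=0$ for $i\gg 0$; trivial Ext-vanishing yields $\pd M<\infty$ or $\id N^\vee<\infty$, and the latter (for an MCM module over a CM ring with canonical module) is equivalent to $\pd N<\infty$. This closes the equivalence.

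For part (1), the direction ``trivial Tor-vanishing $\Rightarrow$ trivial Ext-vanishing'' cannot be run through a canonical module in general, so the plan is to argue directly that trivial Tor-vanishing forces $R$ to be Gorenstein, at which point it has a canonical module ($\omega_R=R$) and part (2) applies — or, more elementarily, once $R$ is Gorenstein, finite injective dimension equals finite projective dimension and the Ext-to-Tor translation $\Ext^i_R(M,N)\cong \Tor^R_i(M,N^*)$-style arguments are immediate. To see that trivial Tor-vanishing forces Gorenstein: take a nonzero MCM module with a free resolution and test against its own transpose/dual, or more simply observe that if $R$ is not Gorenstein then there is a module with infinite injective dimension but finite projective dimension, or construct $M,N$ of infinite projective dimension with $\Tor^R_i(M,N)=0$ for $i\gg 0$ violating the hypothesis — the cleanest route is: trivial Tor-vanishing implies the residue field $k$ is ``Tor-rigid in the strong sense'', and combining with a counting argument forces $\depth R = \dim R$ already given, and then Gorensteinness follows from an argument with $\Tor^R_i(k,\omega)$ where $\omega$ is a (possibly non-free) MCM module; I expect to cite or adapt a known fact that trivial vanishing is only possible over rings admitting a canonical module in the first place.

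\emph{Main obstacle.} The genuinely delicate point is part (1) without assuming a canonical module a priori: establishing that trivial Tor-vanishing already forces the existence of a dualizing module (indeed Gorensteinness, or at least enough structure to make the Ext/Tor duality available) is the crux, since the natural Ext-Tor bridge used everywhere else needs $\omega_R$. The rest — the duality isomorphism $\Ext^i_R(M,N^\vee)\cong\Tor^R_i(M,N)^\vee$ for MCM $N$, the harmlessness of syzygy/MCM-approximation replacement for asymptotic (co)homology, and the equivalence $\id N^\vee<\infty \iff \pd N<\infty$ for MCM $N$ — is standard homological bookkeeping that I would dispatch with references to Remarks \ref{repl} and \ref{sameCx} and Theorem \ref{MCMaprox}.
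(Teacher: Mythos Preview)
Your treatment of part (2) is essentially aligned with the paper's: reduce to MCM modules via MCM approximation and syzygies, then use the Ext/Tor duality through the canonical dual. The paper implements this bridge via Lemma \ref{exp} (2) and (3) rather than the isomorphism $\Ext^i_R(M,N^\vee)\cong\Tor^R_i(M,N)^\vee$ you invoke; your stated isomorphism is only literally correct after reduction to the Artinian case (since $\Hom_R(-,\omega_R)$ is not exact when $\dim R>0$), so you would need to be more careful there, but the idea is the same and salvageable.

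Part (1), however, has a genuine gap. Your plan is to show that trivial Tor-vanishing forces $R$ to be Gorenstein, and then invoke part (2). This is false: Golod rings satisfy trivial Tor-vanishing (see \cite{Jorgensen}, quoted in the introduction), and a ring like $k[x,y]/(x,y)^2$ is Golod but not Gorenstein. So no argument along the lines you sketch can succeed, and the ``known fact'' you hope to cite does not exist.

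The paper's route is different and avoids this entirely: it passes to the completion. By Proposition \ref{pass}, trivial Tor-vanishing transfers from $R$ to $\hat{R}$. Since $\hat{R}$ is a complete CM local ring, it automatically has a canonical module, so part (2) applies to give trivial Ext-vanishing for $\hat{R}$. Finally, Proposition \ref{finite flat} (2) (applied to the faithfully flat map $R\to\hat{R}$) brings trivial Ext-vanishing back down to $R$. You correctly identified that the obstacle is the absence of a canonical module, but the resolution is to manufacture one by completion, not to prove Gorensteinness.
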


For the proof of this theorem we need some preparatory results.

%%%%%%%%%%%%%%%%%%%%%%%%%%%%%%%%%%%%%%%%%%%%%%%%%%%%%%%%%%%%%%%%%%%%%

\begin{lemma}\label{dualexttor}
Let $R$ be a CM local ring with canonical module $\omega_R$. Let $M$ and $N$ be $R$-modules and assume  that $N$, $M \otimes_R N^{\vee}$,  and  $\Omega_1^R(M) \otimes_R N^{\vee}$ are MCM.  Then,  $\Ext^1_R(M,N)=0$ if and only if $\Tor^R_1(M,N^{\vee})=0$. 
\end{lemma}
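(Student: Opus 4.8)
The plan is to exploit the standard adjunction between $\Hom$ into the canonical module and tensor, valid in the Maximal Cohen-Macaulay setting, to convert the $\Ext^1$ statement into a $\Tor_1$ statement. First I would take a minimal free presentation $F_1 \xrightarrow{\partial} F_0 \to M \to 0$, so that $\Omega_1^R(M) = \im(\partial)$ sits in $0 \to \Omega_1^R(M) \to F_0 \to M \to 0$. Applying $-\otimes_R N^\vee$ and using that $N^\vee$, $M\otimes_R N^\vee$, and $\Omega_1^R(M)\otimes_R N^\vee$ are all MCM (hence the sequence stays exact after tensoring, or at least $\Tor_1^R(M,N^\vee)$ is the only obstruction), I get a handle on $\Tor_1^R(M,N^\vee)$ as the kernel of $\Omega_1^R(M)\otimes_R N^\vee \to F_0 \otimes_R N^\vee$.

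The key step is duality. Since $\omega_R$ is the canonical module and all the modules in sight are MCM, the functor $(-)^\vee = \Hom_R(-,\omega_R)$ is exact on MCM modules, is a duality ($M^{\vee\vee}\cong M$ for MCM $M$), and there is a natural isomorphism $\Hom_R(M, N) \cong \Hom_R(M\otimes_R N^\vee, \omega_R) = (M\otimes_R N^\vee)^\vee$ when $N$ is MCM (using $N \cong N^{\vee\vee} = \Hom_R(N^\vee,\omega_R)$ and Hom-tensor adjunction $\Hom_R(M,\Hom_R(N^\vee,\omega_R)) \cong \Hom_R(M\otimes_R N^\vee,\omega_R)$). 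Next I would compute $\Ext^1_R(M,N)$ from the presentation: it is the cohomology at $\Hom_R(F_0,N) \to \Hom_R(F_1,N)$, which, after the Hom-tensor identification and since $F_0, F_1$ are free so $\Hom_R(F_i,N) \cong (F_i \otimes_R N^\vee)^\vee$, becomes the cohomology of the $\vee$-dual of the complex $F_1 \otimes_R N^\vee \to F_0 \otimes_R N^\vee$. Because $(-)^\vee$ is exact on MCM modules and the relevant homology module $\Tor_1^R(M,N^\vee)$ — being a submodule of the MCM module $\Omega_1^R(M)\otimes_R N^\vee$ — is either $0$ or MCM (or one truncates appropriately), one concludes $\Ext^1_R(M,N) \cong \bigl(\Tor_1^R(M,N^\vee)\bigr)^\vee$, at least up to the issue of whether $\Tor_1$ is MCM. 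Then $\Ext^1_R(M,N) = 0 \iff \Tor_1^R(M,N^\vee) = 0$ because $(-)^\vee$ kills a module iff that module is zero (for finitely generated $R$-modules, since $\omega_R$ is faithful).

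The main obstacle I anticipate is the bookkeeping around exactness: $(-)^\vee$ is only exact on short exact sequences of MCM modules, and $\Tor_1^R(M,N^\vee)$ need not obviously be MCM. The clean way around this is to note $\Tor_1^R(M,N^\vee) = \ker\bigl(\Omega_1^R(M)\otimes_R N^\vee \to F_0\otimes_R N^\vee\bigr)$, a submodule of the MCM module $\Omega_1^R(M)\otimes_R N^\vee$; over a CM local ring every submodule of an MCM module has positive depth (indeed depth $\geq 1$), so $\Hom_R(-,\omega_R)$ at least detects its vanishing, and moreover one can break $0 \to \Omega_1^R(M)\otimes_R N^\vee \to F_0\otimes_R N^\vee \to M\otimes_R N^\vee \to 0$ when $\Tor_1 = 0$ to feed into the duality, while conversely running the dual argument. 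Alternatively, one invokes Lemma \ref{superThm}-style dimension-shifting or simply observes that, since $M\otimes_R N^\vee$ is MCM, the natural map has the property that its kernel vanishes iff the $\vee$-dualized map is surjective iff $\Ext^1_R(M,N)=0$. I would organize the final write-up as: (i) set up the presentation and syzygy sequence; (ii) establish $\Hom_R(M,N)\cong (M\otimes_R N^\vee)^\vee$ and $\Hom_R(F_i,N)\cong (F_i\otimes N^\vee)^\vee$ naturally; (iii) identify $\Ext^1_R(M,N)$ with the cokernel of $\bigl(F_0\otimes N^\vee\bigr)^\vee \to \bigl(F_1 \otimes N^\vee\bigr)^\vee$ inside the relevant dualized complex, match it with $\bigl(\Tor_1^R(M,N^\vee)\bigr)^\vee$ using exactness of $\vee$ on the MCM pieces; (iv) conclude by faithfulness/duality that one vanishes iff the other does.
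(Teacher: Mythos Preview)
Your approach is essentially the paper's: both use the natural isomorphism $\Hom_R(-,N)\cong(-\otimes_R N^\vee)^\vee$ coming from $N\cong N^{\vee\vee}$ and Hom--tensor adjunction, applied to a free presentation of $M$. Your target isomorphism $\Ext^1_R(M,N)\cong\bigl(\Tor_1^R(M,N^\vee)\bigr)^\vee$ is in fact correct and makes the equivalence transparent; the paper instead compares $\ker(p\otimes\id_{N^\vee})$ directly with $\Omega_1^R(M)\otimes_R N^\vee$ via their duals.

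There is, however, a genuine gap in your justification of the direction $\Ext^1=0\Rightarrow\Tor_1=0$. You write that ``$(-)^\vee$ kills a module iff that module is zero (for finitely generated $R$-modules, since $\omega_R$ is faithful)'', and later that positive depth of $\Tor_1$ suffices for $\Hom_R(-,\omega_R)$ to detect its vanishing. Both claims are false when $d=\dim R>0$: for instance $\Hom_R(k,\omega_R)=0$ since $\omega_R$ has positive depth, and more generally $\Hom_R(T,\omega_R)=0$ whenever $\dim T<d$, regardless of $\depth T$. Faithfulness of $\omega_R$ as a module is not the same as faithfulness of the functor $\Hom_R(-,\omega_R)$.

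The fix is close to what you wrote but needs a different reason. Since $\Tor_1^R(M,N^\vee)$ embeds in the MCM module $\Omega_1^R(M)\otimes_R N^\vee$, one has $\Ass(\Tor_1)\subseteq\Ass(\Omega_1^R(M)\otimes_R N^\vee)\subseteq\Ass(R)=\operatorname{Min}(R)$; hence if $\Tor_1\ne0$ then $\dim\Tor_1=d$, and localizing at a minimal prime shows $(\Tor_1)^\vee\ne0$. Equivalently---and this is how the paper implicitly argues---set $K=\ker(p\otimes\id_{N^\vee})$; the depth lemma applied to $0\to K\to F_0\otimes N^\vee\to M\otimes N^\vee\to 0$ shows $K$ is MCM, so both $K$ and $\Omega_1^R(M)\otimes N^\vee$ are MCM and the surjection $\pi\colon\Omega_1^R(M)\otimes N^\vee\to K$ is an isomorphism iff $\pi^\vee$ is, by biduality on MCM modules.
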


\begin{proof}
From the exact sequence 
$0 \rightarrow \Omega^1_R(M) \rightarrow R^{\mu(M)} \xrightarrow{p} M \rightarrow 0,$
and the natural isomorphism  $(-\otimes_R N^\vee)^\vee\cong \Hom_R(-,N)$, 
we obtain $$(\ker(p \otimes \id_{N^{\vee}}))^\vee\cong \coker(\Hom_R(p,N)).$$
If $\Ext^1_R(M,N)=0$, then $\coker(\Hom_R(p,N))=\Hom_R(\Omega^1_R(M),N)$, and so $\ker(p \otimes_R \id_{N^{\vee}})=\Omega^1_R(M) \otimes_R N^{\vee}$.  Thus $\Tor_1^R(M,N^{\vee})=0$.  
Similarly, if $\Tor^R_1(M,N^{\vee})=0$, then $\ker(p \otimes_R \id_{N^{\vee}})=\Omega^1_R(N) \otimes_R N^{\vee}$ and so $\coker(\Hom_R(p,N))=\Hom_R(\Omega^1_R(M),N)$.  Thus $\Ext^1_R(M,N)=0$, and the proof is complete.
\end{proof}

\begin{lemma}\label{exp}
Let $R$ be a CM local ring of dimension $d$ and with canonical module $\omega_R$. Let $M$ and $N$ be $R$-modules and assume $N$ is MCM. Then for each $n\in \NN$ the following  hold.

\begin{enumerate}
\item[$(1)$] If  $\Ext^i_R(M,N)=0$ for $1\ls i\ls d$, then $M\otimes_R N^\vee$ is MCM.

\item[$(2)$] If $\Ext^i_R(M,N)=0$ for $1 \ls i \ls d+n$, then $\Tor_i^R(M,N^{\vee})=0$ for $1 \ls i \ls n$.

\item[$(3)$]  If $\Tor_i^R(M,N^{\vee})=0$ for $1 \ls i \ls d+n$, then $\Ext^i_R(M,N)=0$ for $d+1 \ls i \ls d+n$.  
\end{enumerate}
\end{lemma}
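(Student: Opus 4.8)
The plan is to reduce everything to the depth-zero case and then use Lemma~\ref{dualexttor} together with a dimension-shifting argument. First I would pick a maximal $R$-regular sequence $\underline{x} = x_1, \ldots, x_d$ that is also regular on $N$ and on $N^{\vee}$ (possible since both are MCM) and, for each statement, also regular on the relevant syzygies $\Omega_i^R(M)$ up to the range we care about — this is harmless since a generic such sequence works. Passing to $\bar{R} = R/\underline{x}R$, the module $\bar{N} = N/\underline{x}N$ is the canonical module of the Artinian ring $\bar{R}$, Matlis duality over $\bar{R}$ agrees with canonical duality, and we have $\Ext^i_R(M,N) \cong \Ext^i_{\bar{R}}(M/\underline{x}M, \bar{N})$ and $\Tor^R_i(M,N^{\vee}) \cong \Tor^{\bar{R}}_i(M/\underline{x}M, \overline{N^{\vee}})$ for $i > 0$ by the standard base-change isomorphisms, with $\overline{N^{\vee}} \cong (\bar{N})^{\vee}$. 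So after this reduction we may assume $R$ is Artinian, $d = 0$, $N$ and $N^{\vee}$ are automatically MCM, and statement (1) becomes vacuous.

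For part (1) in the general case: assuming $\Ext^i_R(M,N) = 0$ for $1 \le i \le d$, I would show $\depth(M \otimes_R N^{\vee}) \ge d$, hence $M \otimes_R N^{\vee}$ is MCM. The cleanest route is to induct on $d$ using a single regular element $x$: reduce modulo $x$ (regular on $R$, $N$, $N^{\vee}$, and $M$ if $d \ge 1$ — if $x$ is not $M$-regular one replaces $M$ by a high syzygy first, or argues via the long exact sequence), apply the inductive hypothesis over $R/xR$ to conclude $M/xM \otimes_{R/xR} (N/xN)^{\vee}$ is MCM over $R/xR$, and then lift: the point is that $x$ is regular on $M \otimes_R N^{\vee}$ because $\Tor_1^R(M, N^{\vee}) \cong \Ext^1_R(M,N)^{\vee}$-type considerations — more precisely one uses that $\Ext^1_R(M,N) = 0$ forces $\Tor_1^R(M/xM, \cdots)$ to vanish in a way that gives regularity of $x$ on the tensor product. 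Alternatively, and perhaps more transparently, I would invoke Lemma~\ref{dualexttor} iteratively together with the depth lemma applied to $0 \to \Omega_1^R(M) \otimes_R N^{\vee} \to (N^{\vee})^{\mu(M)} \to M \otimes_R N^{\vee} \to 0$, checking that vanishing of $\Ext^i_R(M,N)$ in the range $1 \le i \le d$ inductively forces each syzygy tensor $N^{\vee}$ to have the right depth.

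For parts (2) and (3), I would set up the dimension shift: let $M' = \Omega_d^R(M)$ be a $d$-th syzygy of $M$. Then $\Ext^i_R(M,N) = \Ext^{i-d}_R(M', N)$ (up to the usual low-degree corrections, which are harmless in the asymptotic/consecutive-vanishing ranges we use) and similarly for Tor with $N^{\vee}$. Over the Artinian reduction, part (2) then reduces to: if $\Ext^i_R(M', \bar{N}) = 0$ for $1 \le i \le n$ then $\Tor_i^R(M', \overline{N^{\vee}}) = 0$ for $1 \le i \le n$; this follows by iterating Lemma~\ref{dualexttor}, peeling off one syzygy of $M'$ at a time — at step $j$, the hypotheses guarantee $\Ext^1_R(\Omega_{j}^R(M'), \bar{N}) = 0$ and that the relevant tensor products are MCM (automatic in the Artinian case, where "MCM" just means "any module"), so Lemma~\ref{dualexttor} yields $\Tor_1^R(\Omega_j^R(M'), \overline{N^{\vee}}) = 0$, which is $\Tor_{j+1}^R(M', \overline{N^{\vee}}) = 0$. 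Part (3) is the reverse implication, run the same iteration starting from Tor-vanishing and using the "only if" direction of Lemma~\ref{dualexttor}; the shift by $d$ and the index bookkeeping (why the conclusion only gives $d+1 \le i \le d+n$ rather than $1 \le i$) comes precisely from the fact that the first $d$ Ext-modules were absorbed into making the syzygy $M'$ and reducing to the Artinian ring, so nothing can be said about $\Ext^i_R(M,N)$ for $i \le d$.

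The main obstacle I anticipate is the regular-sequence bookkeeping in part (1) and the passage between $M$ and its syzygy $M'$ when $\underline{x}$ fails to be $M$-regular: one must be careful that reducing mod $\underline{x}$ and taking syzygies commute appropriately, and that the "MCM tensor product" hypotheses of Lemma~\ref{dualexttor} really do hold at every stage of the iteration — this is exactly where part (1), applied not just to $M$ but to each $\Omega_j^R(M)$, gets used to feed the next application of Lemma~\ref{dualexttor}. Everything else is routine dimension-shifting and base change.
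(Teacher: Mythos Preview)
Your reduction to the Artinian case does not go through as stated. The base-change isomorphisms $\Ext^i_R(M,N)\cong\Ext^i_{\bar R}(\bar M,\bar N)$ and $\Tor^R_i(M,N^{\vee})\cong\Tor^{\bar R}_i(\bar M,\overline{N^{\vee}})$ require $\underline{x}$ to be regular on $M$, but $M$ is an arbitrary $R$-module, not assumed MCM; there is no reason a length-$d$ sequence can be chosen regular on $M$ (or on its low syzygies). Your fix via $M'=\Omega_d^R(M)$ makes the base change valid, and it does yield (3) with the correct index range $d+1\ls i\ls d+n$, but for (2) it goes the wrong way: since $\Tor^R_j(M',N^{\vee})\cong\Tor^R_{j+d}(M,N^{\vee})$ for $j\gs 1$, the Artinian argument applied to $M'$ only gives $\Tor^R_i(M,N^{\vee})=0$ for $d+1\ls i\ls d+n$, not for $1\ls i\ls n$.

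The paper avoids the Artinian reduction altogether and works directly over $R$. For (2), one observes that the hypothesis gives $\Ext^j_R(\Omega_i^R(M),N)=0$ for $1\ls j\ls d$ whenever $0\ls i\ls n$; hence (1) applies to each such syzygy and shows $\Omega_i^R(M)\otimes_R N^{\vee}$ is MCM for $0\ls i\ls n$. This is exactly the MCM hypothesis needed in Lemma~\ref{dualexttor}, which then converts $\Ext^i_R(M,N)=0$ into $\Tor^R_i(M,N^{\vee})=0$ for each $1\ls i\ls n$, with no index shift. For (3) no Ext input is available, so instead of (1) one uses the depth lemma on the short exact sequences $0\to\Omega_{j+1}^R(M)\otimes N^{\vee}\to (N^{\vee})^{\beta_j}\to\Omega_j^R(M)\otimes N^{\vee}\to 0$ (valid by the Tor-vanishing hypothesis) to climb up in depth, obtaining that $\Omega_i^R(M)\otimes N^{\vee}$ is MCM for $d\ls i\ls d+n$; Lemma~\ref{dualexttor} then gives the Ext vanishing in the range $d+1\ls i\ls d+n$. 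You had the right instinct in your final paragraph that (1) must be applied to each $\Omega_j^R(M)$, but that application should happen over $R$, not after a reduction.
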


\begin{proof}
The proof of (1) is included in \cite[5.3]{DE17}.  We remark that, although $M$ is assumed to be MCM in \cite{DE17}, this hypothesis is not needed in the proof.

By (1), the assumption of $(2)$ implies $\Omega_i^R(M) \otimes_R N^{\vee}$ is MCM for  $0 \ls i \ls n$. Hence, (2) follows from applying Lemma \ref{dualexttor}.

For $(3)$, since $N^{\vee}$ is MCM, the depth lemma implies $\Omega_i^R (M) \otimes N^{\vee}$ is MCM for $d\ls i\ls d+n$. Then the conclusion follows from Lemma \ref{dualexttor}.
\end{proof}

We now consider how the trivial vanishing conditions behave under extensions of finite flat dimension and hyperplane sections. Many of our arguments draw inspiration from Section 2 of \cite{CH2}. For our purposes we only consider the case where $R$ is CM.  For similar results without this hypothesis, see \cite{AINS2}.

\begin{prop}\label{finite flat}
Let  $(R,\m) \to (S,\mathfrak{n})$ be  a local ring map with $R$ CM.

\begin{enumerate}
\item[$(1)$] Assume $S$ has finite flat dimension over $R$. If $S$ satisfies trivial Tor-vanishing, then so does $R$.
\item[$(2)$]  Assume $S$ is flat over $R$. If $S$ satisfies trivial $\Ext$-vanishing, then so does $R$.
\end{enumerate}
\end{prop}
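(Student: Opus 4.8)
The plan is to reduce vanishing statements over $R$ to vanishing statements over $S$ by base change, using flatness (or finite flat dimension) to control how Tor and projective/injective dimension behave under $- \otimes_R S$. For part (1), suppose $\Tor_i^R(M,N) = 0$ for $i \gg 0$, where $M, N$ are finitely generated $R$-modules; we want to conclude that $M$ or $N$ has finite projective dimension over $R$. First I would like to replace $M$ and $N$ by modules over which the flat dimension of $S$ can be exploited: the standard spectral sequence (or the fact that a finite flat resolution $F_\bullet \to S$ of $S$ over $R$ gives $\Tor_i^R(M, N) \otimes_R S$ in terms of a double complex) yields that $\Tor_j^S(M \otimes_R S, N \otimes_R S)$ vanishes for $j \gg 0$ once $\Tor_i^R(M,N) = 0$ for $i \gg 0$ and $\fd_R S < \infty$. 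Concretely, $\Tor_i^R(M,N) \otimes_R S$ has a finite filtration coming from $\fd_R S$, so if the $R$-Tor vanishes in a long enough range then so do all the $S$-Tor modules $\Tor_j^S(M \otimes_R S, N \otimes_R S)$ for $j \gg 0$. (One must check $S$ is again CM local — it is, since $R \to S$ local with $S$ of finite flat dimension over the CM ring $R$ forces $S$ CM.) By trivial Tor-vanishing over $S$, one of $M \otimes_R S$, $N \otimes_R S$ has finite projective dimension over $S$. Finally I would descend: if $\pd_S(M \otimes_R S) < \infty$ then, because $R \to S$ is local and faithfully flat onto its image in the relevant sense (a local homomorphism of local rings is faithfully flat when flat; for finite flat dimension one uses that $S$ is a "small" extension — here I would invoke the standard result, e.g. from \cite{AINS2} or Avramov–Foxby, that flat dimension descends along local maps of finite flat dimension, equivalently $\pd_R M < \infty \iff \pd_S(M \otimes_R S) < \infty$ when $\fd_R S < \infty$ and the map is local). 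Hence $M$ or $N$ has finite projective dimension over $R$, as desired.

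For part (2), the argument is parallel but now on the Ext side and with the cleaner hypothesis that $S$ is flat (hence faithfully flat, $R \to S$ being local) over $R$. Suppose $\Ext_R^i(M,N) = 0$ for $i \gg 0$. Since $S$ is flat, $\Ext_R^i(M,N) \otimes_R S \cong \Ext_S^i(M \otimes_R S, N \otimes_R S)$ for finitely generated $M$ (flat base change for Ext, valid because $M$ admits a resolution by finite free modules). Therefore $\Ext_S^i(M \otimes_R S, N \otimes_R S) = 0$ for $i \gg 0$, and trivial Ext-vanishing over $S$ gives that either $M \otimes_R S$ has finite projective dimension over $S$ or $N \otimes_R S$ has finite injective dimension over $S$. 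Faithful flatness of the local map $R \to S$ lets me descend both: $\pd_R M < \infty \iff \pd_S(M \otimes_R S) < \infty$, and $\id_R N < \infty \iff \id_S(N \otimes_R S) < \infty$ (the latter because injective dimension, like depth and Bass numbers, is detected after faithfully flat local base change — $\mu_R^i(N) = \dim_k \Ext_R^i(k,N)$ and these are unchanged up to the residue field extension). Hence $M$ has finite projective dimension over $R$ or $N$ has finite injective dimension over $R$.

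The main obstacle I expect is the base-change / descent bookkeeping in part (1): unlike the flat case, where $- \otimes_R S$ is exact and the correspondence between $R$-modules and $S$-modules is transparent, here I must genuinely use $\fd_R S < \infty$ to (a) push vanishing of $R$-Tor forward to vanishing of $S$-Tor — this requires the change-of-rings spectral sequence $\Tor_p^S(\Tor_q^R(S, M'), N') \Rightarrow \Tor_{p+q}^R(M', N')$ or an explicit double-complex argument, and one has to be careful that "vanishing for $i \gg 0$" over $R$ translates to "vanishing for $j \gg 0$" over $S$ rather than just in a bounded range — and (b) descend finite projective dimension along the map, which is where I would lean on an already-established change-of-rings result for projective dimension over maps of finite flat dimension (citing \cite{AINS2} as the excerpt explicitly points there, or the relevant Avramov–Foxby machinery). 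A secondary technical point is verifying that $S$ is Cohen–Macaulay local so that "trivial vanishing for $S$" is even defined in the sense of Definition \ref{tv}; this follows from $R$ CM, $R \to S$ local, and $\fd_R S < \infty$ via the standard inequality $\depth S = \depth R - \fd_R S$ together with preservation of dimension-type data. Once these two ingredients are in hand, the proof is a short diagram chase in each part.
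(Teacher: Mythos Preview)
Your argument for part~(2) is essentially the paper's: flat base change for $\Ext$, then descend finite projective and injective dimension along the faithfully flat local map (the paper cites \cite{FT} for the injective dimension descent, where you argue via Bass numbers; both are fine).

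For part~(1), however, your spectral-sequence plan has a real gap. There is no general spectral sequence directly relating $\Tor_*^R(M,N)$ to $\Tor_*^S(M\otimes_R S,\,N\otimes_R S)$ when $S$ is merely of finite flat dimension over $R$; the Cartan--Eilenberg change-of-rings sequences run in the wrong direction for what you want, and your sentence ``$\Tor_i^R(M,N)\otimes_R S$ has a finite filtration coming from $\fd_R S$, so \ldots'' does not unpack into a correct argument. The obstruction is that $M\otimes_R S$ and $N\otimes_R S$ are not the derived tensor products $M\otimes_R^{\mathbf L} S$, $N\otimes_R^{\mathbf L} S$, so even the derived-category identity $(M\otimes_R^{\mathbf L}S)\otimes_S^{\mathbf L}(N\otimes_R^{\mathbf L}S)\simeq (M\otimes_R^{\mathbf L}N)\otimes_R^{\mathbf L}S$ does not give you what you need without further input. (Also, your aside that $\fd_R S<\infty$ forces $S$ to be CM is false---take $R$ a field and $S$ any non-CM local $k$-algebra---but this is harmless, since ``$S$ satisfies trivial Tor-vanishing'' already presupposes $S$ is CM by Definition~\ref{tv}.)

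The paper's fix is to first replace $M$ and $N$ by high enough syzygies so that $M$, $N$, and crucially $M\otimes_R N$ are all MCM, with $\Tor_i^R(M,N)=0$ for all $i>0$. The point is then \cite[3.4(2)]{CFF}: for an MCM module $L$ over a CM local ring, the minimal free $R$-resolution tensored with $S$ is a minimal free $S$-resolution of $L\otimes_R S$. Applying this to $M$, $N$, and $M\otimes_R N$ simultaneously gives $\Tor_i^S(M\otimes_R S,\,N\otimes_R S)=0$ for $i>0$ in one line, and the descent of finite projective dimension is then immediate (same Betti numbers), with no need to invoke an external Avramov--Foxby result. Your plan is missing this reduction-to-MCM step, which is what makes the finite-flat-dimension hypothesis actually usable.
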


\begin{proof}

We begin with (1). Assume  $\Tor_i^R(M,N)=0$ for $i \gg 0$.  By replacing  $M$ and $N$ with sufficiently high syzygies , we may assume $M$, $N$, and $M \otimes_R N$ are MCM, and that $\Tor_i^R(M,N)=0$ for $i>0$.  Let $F^M_{\bullet}$ and $F_{\bullet}^N$ be minimal free resolutions of $M$ and $N$, respectively. Then $F^M_{\bullet} \otimes_R F^N_{\bullet}$ is a minimal free resolution of $M \otimes_R N$.  
By \cite[3.4(2)]{CFF}, we have $F_{\bullet}^M \otimes_R S$ and $F_{\bullet}^N \otimes_R S$ are minimal free $S$-resolutions of $M \otimes_R S$ and $N \otimes_R S$, respectively. Therefore,
$$\Tor^S_i(M \otimes_R S,N \otimes_R S)=H_i(F_{\bullet}^M \otimes_R S) \otimes_S (F_{\bullet}^N \otimes_R S)\cong H_i((F_{\bullet}^M \otimes_R F_{\bullet}^N) \otimes_R S)=0,$$
where the last equality follows from \cite[3.4(2)]{CFF}.  As $S$ satisfies trivial vanishing,  we have that either  $M \otimes_R S$ or $N \otimes_R S$ has finite projective dimension over $S$.  Since $F_{\bullet}^M \otimes_R S$ and $F_{\bullet}^N \otimes_R S$ are minimal free resolutions of these modules, it follows that either $M$ or $N$ has finite projective dimension over $R$, completing the proof.

We now prove (2). Assume $S$ satisfies trivial $\Ext$-vanishing and assume $M$ and $N$ are $R$-modules with $\Ext^i_R(M,N)=0$ for $i \gg 0$.  Since $\Ext^i_R(M,N) \otimes_S S \cong \Ext^i_S(M \otimes_R S,N \otimes_R S)$ for each $i$,  we conclude $\Ext^i_S(M \otimes_R S,N \otimes_R S)=0$ for $i \gg 0$.  By assumption we must have that $M \otimes_R S$ has finite projective dimension over $S$ or $N \otimes_R S$ has finite injective dimension over $S$. Therefore, the same holds over $R$  (\cite[Corollary 1]{FT}). This concludes the proof.
\end{proof}

The following proposition is crucial for our results as it allows us to pass to complete rings in the proofs, or even Artinian, and with infinite residue field. 

\begin{prop}\label{pass}
Let $R$ be a CM local ring. The  following conditions are equivalent

\begin{enumerate}

\item[$(1)$] $R$ satisfies trivial $\Tor$-vanishing.

\item[$(2)$] $R/\underline{x}R$ satisfies trivial $\Tor$-vanishing for an $R$-regular sequence $\underline{x}\in \m\setminus \m^2$ .

\item[$(3)$] $\hat{R}$ satisfies trivial $\Tor$-vanishing.

\end{enumerate}

\end{prop}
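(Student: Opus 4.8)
The plan is to prove the implications $(1)\Rightarrow(3)\Rightarrow(2)\Rightarrow(1)$, using the change-of-rings results already established. The implication $(3)\Rightarrow(2)$ and $(2)\Rightarrow(1)$ are immediate from Proposition \ref{finite flat}(1): in each case the target ring has finite flat dimension over the source ($\hat R$ is flat over $R$, and $R/\underline{x}R$ has finite flat dimension over $R$ since $\underline x$ is an $R$-regular sequence), so trivial $\Tor$-vanishing descends. Thus the entire content of the proposition is the ascent direction: that trivial $\Tor$-vanishing for $R$ forces it for $\hat R$ (hence, iterating with a lift of $\underline x$, for $R/\underline{x}R$ via passage to completion — or one can argue $(1)\Rightarrow(2)$ directly and then $(2)\Rightarrow(3)$).

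For $(1)\Rightarrow(3)$: suppose $\widehat M$ and $\widehat N$ are $\hat R$-modules with $\Tor_i^{\hat R}(\widehat M,\widehat N)=0$ for $i\gg 0$, and we want to conclude one of them has finite projective dimension. The obstacle is that not every finitely generated $\hat R$-module descends to an $R$-module, so we cannot literally pull $\widehat M,\widehat N$ back. The standard workaround is to replace them with high syzygies — so we may assume $\widehat M,\widehat N$, and $\widehat M\otimes_{\hat R}\widehat N$ are MCM and $\Tor_i^{\hat R}(\widehat M,\widehat N)=0$ for all $i>0$ — and then use that, after a further faithfully flat base change to arrange an infinite residue field and modding out a maximal regular sequence, one reduces to the Artinian case; but over an Artinian *complete* ring every finitely generated module has a minimal free resolution whose differential matrices have entries in $\fm$, and one wants to realize $\widehat M$ as the completion of an $R$-module. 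Concretely: since $\hat R$ is the completion and $R$ is excellent/or just by Artin approximation in the Artinian setting (or more simply, since completion is faithfully flat and an Artinian ring equals its own completion after the regular-sequence reduction), one can find $R$-modules $M,N$ with $\widehat M\cong M\otimes_R\hat R$ and $\widehat N\cong N\otimes_R\hat R$ when $R$ is complete; but $R$ need not be complete. The cleaner route, and the one I expect the authors take, is: it suffices to prove $(2)\Rightarrow(3)$ fails to be needed — instead prove $(1)\Rightarrow(2)$ and $(2)\Rightarrow(3)$, where $(1)\Rightarrow(2)$ is handled by the hyperplane-section analogue of the minimal-free-resolution argument in Proposition \ref{finite flat}(1) (lifting resolutions along $R\to R/\underline x R$ via \cite[3.4(2)]{CFF} or a Koszul complex argument), and $(2)\Rightarrow(3)$ follows since one may iterate $(2)$ down to an Artinian reduction, which is complete.

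Let me record the version I would actually write. We show $(1)\Rightarrow(2)$: given $\Tor_i^{R/\underline x R}(\overline M,\overline N)=0$ for $i\gg 0$, replace $\overline M,\overline N$ by high syzygies so that they and their tensor product are MCM over $R/\underline x R$ and the Tor vanishes for all $i>0$. Lift minimal free resolutions: if $F_\bullet,G_\bullet$ are minimal $R/\underline x R$-free resolutions of $\overline M,\overline N$, then (arguing as in the proof of Proposition \ref{finite flat}(1), with the roles of extension and quotient reversed, using that $\underline x$ is regular on the relevant modules) one produces $R$-modules $M,N$ with $M\otimes_R R/\underline x R=\overline M$, etc., or at least concludes that $\Tor_i^R(M',N')=0$ for suitable $R$-modules mapping onto them; applying trivial $\Tor$-vanishing over $R$ and then descending projective dimension along the regular sequence (projective dimension over $R$ is finite iff it is finite over $R/\underline x R$ for modules on which $\underline x$ is regular) gives the claim. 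Then $(3)$ follows from $(2)$ by passing to an Artinian reduction of $R$ with infinite residue field, which coincides with an Artinian reduction of $\hat R$, combined with the faithful flatness of $R\to\hat R$ handled in Proposition \ref{finite flat}(1) for the easy direction. The main obstacle throughout is the descent-of-modules issue in $(1)\Rightarrow(2)$ and $(2)\Rightarrow(3)$: one never literally has $\widehat M = M\otimes_R\hat R$, so all the work goes into the syzygy-replacement reduction and the lifting of minimal free resolutions along $R\to R/\underline x R$ so that the hypothesis genuinely transfers to honest $R$-modules before invoking trivial vanishing over $R$.
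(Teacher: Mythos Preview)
Your easy directions are fine (modulo some mislabeling: the parentheticals you give actually justify $(3)\Rightarrow(1)$ and $(2)\Rightarrow(1)$, which matches the paper). The genuine gap is in your $(1)\Rightarrow(2)$.

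You attempt to \emph{lift} the $R/\underline{x}R$-modules $\overline M,\overline N$ to $R$-modules $M,N$ with $M\otimes_R R/\underline{x}R\cong\overline M$, etc. This is both unnecessary and unjustified: such lifts need not exist, and your fallback ``or at least concludes that $\Tor_i^R(M',N')=0$ for suitable $R$-modules mapping onto them'' is not an argument. The key observation you are missing is that any $R/\underline{x}R$-module \emph{already is} an $R$-module via restriction of scalars along $R\to R/\underline{x}R$; no lifting is required. What remains is to compare $\Tor^{R/\underline{x}R}_\bullet(M,N)$ with $\Tor^R_\bullet(M,N)$ for such modules. The paper does this with the standard change-of-rings long exact sequence (arising from the spectral sequence in \cite[10.73]{Rotman}; cf.\ \cite[(1.4)]{HJ}): when $x$ is a nonzerodivisor on $R$ and $M,N$ are $R/xR$-modules, one has
\[
\cdots \to \Tor^{R/xR}_{i-1}(M,N)\to \Tor^R_i(M,N)\to \Tor^{R/xR}_i(M,N)\to\cdots,
\]
so eventual vanishing of $\Tor^{R/xR}$ forces eventual vanishing of $\Tor^R$. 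Then trivial $\Tor$-vanishing over $R$ gives $\pd_R M<\infty$ or $\pd_R N<\infty$, and since $\underline{x}\in\m\setminus\m^2$ one descends finiteness of projective dimension to $R/\underline{x}R$ (the paper cites \cite[Corollary 27.5]{Nagata}).

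Your plan for deducing $(3)$---reduce to an Artinian quotient, which is automatically complete and coincides with the corresponding quotient of $\hat R$, then ascend via Proposition~\ref{finite flat}(1)---is exactly what the paper does.
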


\begin{proof}
Each condition $(2)$-$(3)$ implies $(1)$ by Proposition \ref{finite flat} (1).  We show $(1)$ implies both $(2)$ and $(3)$, starting with $(1)$ implies $(2)$. 

Assume $R$ satisfies trivial $\Tor$-vanishing and let $M$ and $N$ be $R/\underline{x}R$-modules such that $\Tor^{R/\underline{x}R}_i(M,N)=0$ for $i \gg 0$.  A standard change of rings spectral sequence (\cite[10.73]{Rotman}) induces the following long exact sequence  (cf. \cite[(1.4)]{HJ}),

\[\begin{tikzpicture}[descr/.style={fill=white,inner sep=1.5pt}]
        \matrix (m) [
            matrix of math nodes,
            row sep=1em,
            column sep=1.8em,
            text height=1.5ex, text depth=0.25ex
        ]
        { M \otimes_{R/\underline{x}R} N & \Tor^1_{R}(M,N) & \Tor^1_{R/\underline{x}R}(M,N) & 0 \\
           \Tor^1_{R/\underline{x}R}(M,N) & \Tor^2_{R}(M,N) & \Tor^2_{R/\underline{x}R}(M,N) & \mbox{} \\
          \Tor^2_{R/\underline{x}R}(M,N)  & \Tor^3_{R}(M,N) & \Tor^3_{R/\underline{x}R}(M,N) & \mbox{} \\
          \mbox{}  & \mbox{} & \hspace{1.4cm} \cdots \hspace{.25cm} & \mbox{} \\
        };

        \path[overlay,->, font=\scriptsize,>=latex]
        (m-1-1) edge (m-1-2)
        (m-1-2) edge (m-1-3)
        (m-1-3) edge (m-1-4);
        \path[overlay,->, font=\scriptsize,>=latex]
        (m-2-3) edge[out=365,in=185] (m-1-1)
        (m-2-2) edge (m-2-3)
        (m-2-1) edge (m-2-2)
        (m-3-3) edge[out=365,in=185] (m-2-1);
        \path[overlay,->, font=\scriptsize,>=latex]
        (m-3-2) edge (m-3-3)
        (m-3-1) edge (m-3-2);
        \path[overlay,->, font=\scriptsize,>=latex]
        (m-4-3) edge[out=365,in=185] (m-3-1);   
\end{tikzpicture}\]
Since $\Tor^{R/\underline{x}R}_i(M,N)=0$ for $i \gg 0$, it follows that $\Tor^R_i(M,N)=0$ for $i \gg 0$. By assumption  $M$ or $N$ must have finite projective dimension over $R$. Since $\underline{x} \in \m\setminus\m^2$, the same holds over $R/\underline{x}R$ (\cite[Corollary 27.5]{Nagata}). The conclusion follows.  

We now show $(1)$ implies $(3)$.  Let $\underline{x}$ be  a maximal $R$-regular sequence  in $\m\setminus \m^2$.  As $(1)$ implies $(2)$, we know $R/\underline{x}R$ satisfies trivial $\Tor$-vanishing. Since $R/\underline{x}R$ is Artinian, it is complete, and also $R/\underline{x}R \cong \hat{R}/\underline{x}\hat{R}$.  Then $\hat{R}$ satisfies trivial $\Tor$-vanishing by Proposition \ref{finite flat} (1).   

\end{proof}

We are now ready to prove Theorem \ref{equiv}.

\begin{proof}[Proof of Theorem \ref{equiv}]
We begin  with (2). By Remark \ref{repl} and by  passing to sufficiently high syzygies, we may assume $M$ and $N$ are MCM. The result now follows from Lemma \ref{exp} (2) and (3).

We continue with (1). Assume $R$ satisfies trivial $\Tor$-vanishing.  By Proposition \ref{pass}, $\hat{R}$ satisfies trivial $\Tor$-vanishing.  Since $\hat{R}$ has a canonical module, $\hat{R}$ has trivial $\Ext$-vanishing as well, and then so does $R$ by Proposition \ref{finite flat} (2).
\end{proof}

\begin{remark}
In general, trivial vanishing need not ascend along flat local maps.  Indeed, any complete equicharacteristic CM local ring $(S,\fn)$ is a finite flat extension of a regular local ring  $R$ (Noether normalization lemma). But there exist such  $S$ that do not satisfy trivial vanishing (e.g., $S$ is a complete complete intersection with $\codim(S) \gs 2$). 

Trivial vanishing is also not preserved by modding out a regular element in $\m^2$; see Theorem \ref{necess} (2).
\end{remark}

We now summarize results in the literature that  characterize the trivial vanishing condition in  small  codimension.  We recall that  $R$ has an {\it embedded deformation} if there exists  a a local ring $(Q,\fp)$ such that $R\cong Q/(\underline{f})$, for some regular sequence $\underline{f}\subseteq \fp^2$ .\

\begin{thm}\label{necess}
Let $R$ be a CM local ring and set $c=\codim(R)$, then following hold. 
\begin{enumerate}
\item[$(1)$] If $c\ls 1$, $R$ satisfies trivial vanishing. $($\cite[1.9]{HW}$)$ 
\item[$(2)$]  If $c\gs 2$ and $\hat{R}$ has an embedded deformation, then $R$ does not satisfy trivial vanishing. $ ($\cite[4.2]{Sega}$)$ 
\item[$(3)$]  If $c=2$,  then $R$ satisfies trivial vanishing if and only if it is not a complete intersection if and only if it is a Golod ring. $ ($\cite{Sch}, \cite[1.1]{JS}$)$ 
\item[$(4)$]  If  $c = 3$, or $c = 4$ and $R$ is Gorenstein,  then $R$ satisfies trivial vanishing if and only if $\hat{R}$ has no embedded deformation. $ ($\textnormal{Proposition \ref{codim3}}, \cite[2.1]{Sega}$)$ 
\item[$(5)$] There exists  $R$ with $c=4$, and another Gorenstein $R$ with $c=5$, such that $\hat{R}$ has no embedded deformation and $R$ does not satisfy trivial vanishing.  $($\cite[3.10]{GP90}, \cite[3.3]{JS}$)$.
\end{enumerate}
\end{thm}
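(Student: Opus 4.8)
The statement is a summary theorem collecting five facts, each attributed to sources (parts (1)--(3), (5)) or to a result proved later in this same paper (parts (4)). So the ``proof'' is essentially an assembly of citations plus, for part (4), a reduction to the codimension cases established in Proposition \ref{codim3} and in \cite[2.1]{Sega}. My plan is to dispatch each part in turn, invoking the named results, and to spend the real effort on part (4), where the codimension-3 case is our contribution (Proposition \ref{codim3}) and the Gorenstein codimension-4 case is Şega's \cite[2.1]{Sega}.

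\textbf{Steps.} First, for (1): when $c=\codim(R)\le 1$, $\hat R$ is a hypersurface (or regular), and trivial vanishing holds by Huneke--Wiegand \cite[1.9]{HW}; combined with Proposition \ref{pass} this descends from $\hat R$ to $R$, so no separate argument is needed. Second, for (2): if $c\ge 2$ and $\hat R$ has an embedded deformation, Şega's \cite[4.2]{Sega} produces modules witnessing the failure of trivial vanishing over $\hat R$; by Proposition \ref{finite flat}(1) applied to $R\to\hat R$ (which has finite flat dimension), failure of trivial Tor-vanishing over $\hat R$ forces the same over $R$. Third, for (3): the case $c=2$. A codimension-2 CM local ring whose completion is a complete intersection certainly fails trivial vanishing by (2); conversely, if $\hat R$ is not a complete intersection, then by the structure theory (Schenzel \cite{Sch}, and \cite[1.1]{JS}) $\hat R$ is Golod, and Golod rings satisfy trivial vanishing by Jorgensen \cite{Jorgensen}, which descends to $R$. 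The ``complete intersection $\Leftrightarrow$ has embedded deformation'' equivalence in codimension 2 is classical since a codimension-2 CI is visibly $Q/(\underline f)$ with $\underline f$ a regular sequence that can be taken in $\fp^2$ after a standard reduction.

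\textbf{Part (4), the main point.} Here I would argue: by Proposition \ref{pass} we may replace $R$ by its completion, and by Remark \ref{reductions} we may further mod out a maximal regular sequence in $\fm\setminus\fm^2$ to assume $R$ is Artinian with the same codimension, and with infinite residue field; this preserves both ``has an embedded deformation'' (Şega's invariance) and ``satisfies trivial vanishing'' (Proposition \ref{pass}). If $\hat R$ has an embedded deformation, failure is (2). So assume $\hat R$ has no embedded deformation. For $c=3$ this is exactly Proposition \ref{codim3}. For $c=4$ with $R$ Gorenstein, this is \cite[2.1]{Sega}: Şega shows a Gorenstein ring of codimension at most 4 without embedded deformations is Golod (or more precisely, has the relevant Golod-type behavior), hence satisfies trivial vanishing. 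The step I expect to carry the weight is simply citing Proposition \ref{codim3} correctly and making sure the reductions (completion, Artinian quotient, infinite residue field) are compatibly stated; there is no new mathematics here beyond the forward reference. Finally, for (5): the first example is the $e(R)=8$, $c=4$ ring and the second the $e(R)=12$, $c=5$ Gorenstein ring constructed in \cite[3.3]{JS} (with the codimension-4 case ultimately rooted in the Gasharov--Peeva construction \cite[3.10]{GP90}); one checks there that the completions have no embedded deformations yet trivial vanishing fails, which is precisely the assertion. I would present parts (1)--(3) and (5) in a couple of sentences each and devote a short paragraph to the reductions feeding into (4).

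\textbf{Remark on obstacle.} The only genuine obstacle is logical bookkeeping: part (4) is proved \emph{after} this theorem (Proposition \ref{codim3}), so I must phrase the proof so that it merely \emph{records} the combination of Proposition \ref{codim3} with \cite[2.1]{Sega} and \cite[4.2]{Sega}, rather than reproving anything — and I should be careful that no circular dependence arises, i.e.\ that Proposition \ref{codim3}'s own proof does not cite this summary theorem. Assuming that is arranged (which the paper's structure suggests), the proof is a clean citation assembly.
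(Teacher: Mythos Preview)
Your approach is correct and matches the paper: Theorem~\ref{necess} is stated there as a summary with inline citations and carries no separate proof block, so ``citation assembly'' is exactly what the paper does. Two small corrections to your write-up: in part~(2), the direction of Proposition~\ref{finite flat}(1) is backwards for what you want (it says trivial vanishing \emph{descends} from $S$ to $R$, not ascends), so you should invoke Proposition~\ref{pass} for the equivalence between $R$ and $\hat R$; and in part~(4), \c{S}ega's \cite[2.1]{Sega} does not proceed by showing the ring is Golod---her argument goes through the Tor-algebra structure---so simply cite the result rather than paraphrase its mechanism. (Also, \cite{Sch} is Scheja, not Schenzel.)
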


\section{Main results}\label{mainResults}

This section includes our  main results. We   present several sufficient conditions for a CM local ring to satisfy trivial vanishing. 

The following is an important lemma for the proofs of our results; it  allows us to relate the vanishing of Tor  
with the growth of the Betti numbers
of the modules involved.

\begin{lemma}\label{fkCom}
Let $R$ be a CM local ring and set $c =  \codim(R)$, $e=e(R)$, and $\ell=\elll(R)$. Let  $M$ and $N$ be non-free MCM $R$-modules   and  assume $\Tor_i^R(M,N)=0$ for $i\gg 0$, then

\begin{enumerate}
\item[$(1)$] $$  (\lr_R(M)+1)(\lr_R(N)+1)\ls e.$$

\item[$(2)$] If the limits  in $\lr_R(M)$ and $\lr_R(N)$  exist, we also have 
$$ \lr_R(M)\lr_R(N)\ls e-c-\ell+2.$$
\end{enumerate}

\end{lemma}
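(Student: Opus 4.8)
The plan is to reduce immediately to the case $\Tor_i^R(M,N)=0$ for all $i>0$: replacing $M$ by $\Omega_a^R(M)$ and $N$ by $\Omega_b^R(N)$ with $a+b$ large kills the remaining Tor and changes nothing essential, since syzygies of non-free MCM modules over a CM ring are again non-free MCM (Auslander--Buchsbaum), the ratios $\lr_R(-)$ are invariant under $\Omega_1^R(-)$, and $\codim(R),e(R),\elll(R)$ are intrinsic to $R$. Write $c=\codim(R)$, $e=e(R)$, $\ell=\elll(R)$, $d=\dim R$, $q=e-c-\ell+2$ (so $e=q+c+\ell-2$), $a=\lr_R(M)$, $b=\lr_R(N)$; recall $a,b\ge 1$.

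For part (1) the engine is Lemma~\ref{superThm}(1). For every $j\ge 0$ the module $\Omega_j^R(M)$ is non-free MCM and $\Tor_i^R(\Omega_j^R(M),N)\cong\Tor_{i+j}^R(M,N)=0$ for all $i\ge 1$, so Lemma~\ref{superThm}(1) applied over the range $n-d\le i\le n$ gives
\[
\beta_n^R(N)\ \le\ \bigl(\uind_R(\Omega_j^R(M))-1\bigr)\,\beta_{n-1}^R(N)\qquad(n\gg 0),
\]
and letting $n\to\infty$ yields $b+1\le \uind_R(\Omega_j^R(M))=e_R(\Omega_j^R(M))/\beta_j^R(M)$ for every $j$. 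The crucial point is that, with $u_j:=e_R(\Omega_j^R(M))$ and $\beta_j:=\beta_j^R(M)$, the short exact sequences $0\to\Omega_{j+1}^R(M)\to R^{\beta_j}\to\Omega_j^R(M)\to 0$ consist of MCM modules of dimension $d$, so additivity of Hilbert--Samuel multiplicity forces $u_{j+1}+u_j=\beta_j\,e$. Setting $v:=\inf_j(u_j/\beta_j)\ge b+1$ and using $u_j\ge v\beta_j$ we get $\beta_j e=u_{j+1}+u_j\ge v(\beta_{j+1}+\beta_j)$, i.e. $e\ge v(\beta_{j+1}/\beta_j+1)$ for all $j$; taking $\limsup_j$ gives $e\ge v(a+1)$, and therefore $(a+1)(b+1)\le v(a+1)\le e$. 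This proves (1).

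For part (2) I would feed part (1) into Proposition~\ref{eventInc}(1). Since the limits defining $a$ and $b$ exist, dividing $\beta_n^R(M)\ge c\beta_{n-1}^R(M)-q\beta_{n-2}^R(M)$ by $\beta_{n-2}^R(M)$ and letting $n\to\infty$ gives $a^2-ca+q\ge 0$, and likewise $b^2-cb+q\ge 0$. Suppose, for contradiction, that $ab>q$. If $a\ge c$ or $b\ge c$ then $a+b>c$ is clear; otherwise $a(c-a)\le q<ab$ forces $c-a<b$, again $a+b>c$. Feeding this into part (1),
\[
e\ \ge\ (a+1)(b+1)\ =\ ab+(a+b)+1\ >\ q+c+1\ =\ e-\ell+3,
\]
whence $\ell>3$. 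So the asserted inequality $\lr_R(M)\lr_R(N)\le e-c-\ell+2$ already holds whenever $\ell\le 3$.

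The remaining case $\ell\ge 4$ is where the difficulty concentrates, and it is the step I expect to be the real obstacle. When $c^2\ge 4q$ the two quadratic inequalities say $a,b\in(0,\rho_-]\cup[\rho_+,\infty)$ with $\rho_\pm=\tfrac12(c\pm\sqrt{c^2-4q})$, and the harmless configuration $a,b\le\rho_-$ gives $ab\le\rho_-^2\le\rho_-\rho_+=q$ at once; for the dangerous one — where, say, $a\ge\rho_+$, or where $c^2<4q$ — the plan is to use the tensor resolution $F^M\otimes_R F^N$ of $M\otimes_R N$ (legitimate after the reduction), which gives $P_{M\otimes_R N}^R(t)=P_M^R(t)P_N^R(t)$ and $\beta_{2n}^R(M\otimes_R N)\ge\beta_n^R(M)\beta_n^R(N)$, and then to play Proposition~\ref{eventInc}(1)--(3) for $M\otimes_R N$ off against part (1) and the bound $a+b\ge 2\sqrt{ab}$. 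For the numerically extremal $(c,\ell,e)$ that these inequalities do not directly settle, one should expect to appeal to the fact that the rings in question satisfy trivial vanishing (via Theorem~\ref{necess}), so that no such pair $M,N$ exists and the claim is vacuous there. Carrying out this final case division cleanly, and in particular handling $\ell\ge 4$ uniformly, is the part that needs the most care.
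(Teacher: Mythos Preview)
Your argument for part (1) is correct and is essentially the paper's proof, just packaged differently: both use Lemma~\ref{superThm}(1) to get $\lr_R(N)+1\le\uind_R(\Omega_j^R(M))$ for all $j$, and then exploit the additivity $e_R(\Omega_{j+1}^R(M))+e_R(\Omega_j^R(M))=e\,\beta_j^R(M)$. Your infimum device is a clean variant of the paper's $1/n$-argument.

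Part (2) has a genuine gap. Your argument only settles $\ell\le 3$, and the sketched plan for $\ell\ge 4$ cannot be completed. The appeal to Theorem~\ref{necess} is backwards: the content of the lemma lives precisely in rings that do \emph{not} satisfy trivial vanishing and are \emph{not} covered by Theorem~\ref{necess} (e.g.\ the Gorenstein ring in Example~\ref{TheE} with $e=12$, $c=5$, $\ell=4$). More fundamentally, when $c^2<4q$ the quadratic constraints $a^2-ca+q\ge 0$, $b^2-cb+q\ge 0$ are vacuous, and part (1) alone gives only $ab\le(\sqrt{e}-1)^2$; forcing this below $q$ would require $e\ge(c+\ell-1)^2/4$, which fails in general. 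The tensor-resolution idea does not help either: since $P_{M\otimes_R N}^R=P_M^R P_N^R$, the ratio $\beta_{n+1}^R(M\otimes_R N)/\beta_n^R(M\otimes_R N)$ tends to $\max(a,b)$, not to anything involving $ab$, so feeding $M\otimes_R N$ into Proposition~\ref{eventInc} recovers nothing new.

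The paper's proof of (2) uses a different and much shorter idea. After reducing to $R$ Artinian, one observes that for $i\gg 0$ the Tor vanishing makes the natural map
\[
\Gamma:=\Omega_{i+1}^R(M)\otimes_R\Omega_{i+1}^R(N)\ \hookrightarrow\ \fm^2\bigl(R^{\beta_i^R(M)}\otimes_R R^{\beta_i^R(N)}\bigr)
\]
injective (each syzygy sits inside $\fm$ times a free module, and the required $\Tor_1$ vanishes). Then \cite[2.1]{GP90}, the same input behind Proposition~\ref{eventInc}, gives directly $\mu(\Gamma)\le(e-c-\ell+2)\,\beta_i^R(M)\beta_i^R(N)$. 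Since $\mu(\Gamma)=\beta_{i+1}^R(M)\beta_{i+1}^R(N)$, dividing and passing to the limit (which exists by hypothesis) yields $\lr_R(M)\lr_R(N)\le e-c-\ell+2$. The point you are missing is to apply the Gasharov--Peeva bound not to $M$, $N$, or $M\otimes_R N$, but to the \emph{tensor product of two consecutive syzygies}, which naturally sits in $\fm^2$ of a free module.
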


\begin{proof} 
 We begin with (1). By assumption,  $\Tor^i_R(\Omega_j^R (M),N)=0$ for $i\gg 0$ and $j\gs 0$. Therefore,   from Lemma \ref{superThm} it follows that
\begin{equation}\label{Finq}
\lr_R(N)\ls \uind_R(\Omega_j^R (M)) -1 \text{ for every } j\gs 0. 
\end{equation}
Fix $n\in \NN_{n>0}$ and pick $j\in \NN$ such that $\beta_{j+1}^R(M) > (\lr_R(M)-1/n) \beta_j^R(M)$.  From the additivity of multiplicities, we have $$e_R(\Omega_j^R(M))+e_R(\Omega_{j+1}^R(M))=e_R(R^{\beta_j^R(M)})=e\beta_j^R(M) .$$ Therefore, either $$ e_R(\Omega_j^R(M))\ls  \dfrac{e\beta_j^R(M) }{\lr_R(M)-1/n+1},$$ or, $$e_R(\Omega_{j+1}^R(M))\ls    \dfrac{ (\lr_R(M)-1/n)e\beta_j^R(M) }{\lr_R(M)-1/n+1}<\dfrac{e\beta_{j+1}^R(M) }{\lr_R(M)-1/n+1}.$$
Hence, $\min\{\uind_R(\Omega_j^R (M)), \uind_R(\Omega_{j+1}^R (M))\}\ls  \dfrac{e}{\lr_R(M)-1/n+1}$. Then from \eqref{Finq} we have
$e\gs (\lr_R(M)-1/n+1)(\lr_R(N)+1)$. The result of (a) now follows by taking $\displaystyle\lim_{n\to\infty}$.

We continue with  (2). We may assume $R$ is Artinian (see Remark \ref{reductions}). Let $i\in \NN$ and consider the module $\Gamma = \Omega_{i+1}^R (M) \otimes_R \Omega_{i+1}^R (N)$. By the vanishing of Tor assumption we have $\Gamma\hookrightarrow \fm^2 (R^{\beta_{i}^R(M)}\otimes_R R^{\beta_{i}^R(N)})$. Therefore, from \cite[2.1]{GP90}, it follows that 
$$(e-c-\ell+2)\beta_{i}^R(M)\beta_{i}^R(N)\gs \mu(\Gamma)=\beta_{i+1}^R(M)\beta_{i+1}^R(N).$$
The conclusion now follows by dividing by  $\beta_{i}^R(M)\beta_{i}^R(N)$ and taking $\displaystyle\lim_{i\to\infty}$.
\end{proof}

We now present the  first main result of this section which gives a sufficient numerical condition for a ring to satisfy trivial vanishing.

\begin{thm}\label{main}
Let $R$ be a CM local ring and set $c =  \codim(R)$, $e=e(R)$, and $\ell=\elll(R)$. If  
\begin{equation}\label{firstIn}
e<\frac{4c+2\ell-1-\sqrt{8c+4\ell-3}}{2},
\end{equation} 
then $R$ satisfies trivial  vanishing. 
\end{thm}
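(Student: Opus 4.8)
The plan is to reduce to the Artinian case and then argue by contradiction using the interplay between the Gasharov--Peeva lower bounds (Proposition \ref{eventInc}) and the obstruction Lemma \ref{fkCom}. By Proposition \ref{pass} and Remark \ref{reductions}, we may assume $R$ is Artinian with infinite residue field, so $\elll(R)=\ell$ is literally $\min\{i\mid \fm^i=0\}$ and $e=e(R)=\lambda(R)$. Suppose, for contradiction, that $R$ does not satisfy trivial $\Tor$-vanishing (which by Theorem \ref{equiv} is the same as failing trivial vanishing); then there exist non-free $R$-modules $M$ and $N$ with $\Tor_i^R(M,N)=0$ for $i\gg0$ and both $M,N$ of infinite projective dimension. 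Replacing $M$ and $N$ by high syzygies (which preserves infinite projective dimension, the vanishing of Tor, and the limit ratios $\lr_R(-)$), we may assume $M$ and $N$ are non-free MCM, i.e., just nonzero.

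Next I would feed these modules into Lemma \ref{fkCom}(1) to get $(\lr_R(M)+1)(\lr_R(N)+1)\le e$. On the other hand, Proposition \ref{eventInc}(2) gives $\beta_n^R(M)\ge (2c-e+\ell-2)\beta_{n-1}^R(M)$ for $n\gg0$, hence $\lr_R(M)\ge 2c-e+\ell-2$, and likewise $\lr_R(N)\ge 2c-e+\ell-2$. Set $s:=2c-e+\ell-2$; since we assumed $e<\frac{4c+2\ell-1-\sqrt{8c+4\ell-3}}{2}$, a short computation shows $\frac{4c+2\ell-1-\sqrt{8c+4\ell-3}}{2}\le 2c+\ell-3$, so $e\le 2c+\ell-4$, whence $s\ge2$ and in particular $s\ge1$, so the lower bounds on $\lr_R(M),\lr_R(N)$ are meaningful (indeed $\ge 2$). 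Combining, $e\ge (\lr_R(M)+1)(\lr_R(N)+1)\ge (s+1)^2=(2c-e+\ell-1)^2$. So $e$ satisfies the quadratic inequality $e\ge(2c+\ell-1-e)^2$, i.e., writing $u=2c+\ell-1-e$ (a positive integer under our hypothesis), we get $u^2\le e = 2c+\ell-1-u$, so $u^2+u-(2c+\ell-1)\le 0$, which forces $u\le \frac{-1+\sqrt{8c+4\ell-3}}{2}$, i.e. $e=2c+\ell-1-u\ge 2c+\ell-1-\frac{-1+\sqrt{8c+4\ell-3}}{2}=\frac{4c+2\ell-1-\sqrt{8c+4\ell-3}}{2}$. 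This contradicts \eqref{firstIn}, completing the proof.

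The main technical obstacle I anticipate is the book-keeping around the edge cases of Proposition \ref{eventInc}: part (2)'s bound $\beta_n^R(M)\ge(2c-e+\ell-2)\beta_{n-1}^R(M)$ only yields $\lr_R(M)\ge s$ when $s\ge1$, and if $s\le 0$ the bound is vacuous; one must check the hypothesis \eqref{firstIn} indeed forces $s\ge2$ (so the argument never needs part (3), which handles the borderline $s=1$). One must also verify carefully that Proposition \ref{eventInc} applies: after syzygy replacement $M$ is MCM so $d-\depth M=0$, and by further raising the syzygy index we can assume $n>\mu(M)$ for all relevant $n$, so the stated range "$n>\max\{d-\depth M,\mu(M)\}$" is satisfied eventually. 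Finally, one should double-check the algebraic equivalence between \eqref{firstIn} and the clean bound "$(2c+\ell-1-e)^2\le e$" in both directions, since the contradiction hinges on it; this is the computation I would present in a displayed inequality such as
\begin{equation*}
e < \frac{4c+2\ell-1-\sqrt{8c+4\ell-3}}{2} \iff \bigl(2c+\ell-1-e\bigr)^2 > e \text{ and } e < 2c+\ell-1,
\end{equation*}
handled by completing the square in $e$.
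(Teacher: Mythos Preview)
Your argument is correct and is essentially the paper's own proof: assume a counterexample, pass to MCM syzygies, feed Proposition~\ref{eventInc}(2) into Lemma~\ref{fkCom}(1) to obtain $e\ge(2c+\ell-1-e)^2$, and solve the quadratic to contradict~\eqref{firstIn}. One minor correction: your intermediate claim that \eqref{firstIn} forces $s\ge 2$ (equivalently $\tfrac{4c+2\ell-1-\sqrt{8c+4\ell-3}}{2}\le 2c+\ell-3$) requires $2c+\ell\ge 7$ and fails, e.g., for $c=2$, $\ell=2$; however, you only need $s+1>0$ for the step $(\lr_R(M)+1)(\lr_R(N)+1)\ge(s+1)^2$, and this does follow from \eqref{firstIn} since $\sqrt{8c+4\ell-3}>1$ gives $e<2c+\ell-1$---exactly the check the paper performs (setting $b=2c+\ell-1$ and noting $b-e>0$).
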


\begin{proof} 
By Theorem \ref{equiv} it suffices to show $R$ satisfies trivial Tor-vanishing. In order to simplify the notation, set $b:=2c+\ell-1$, then we have $$2e<2b+1-\sqrt{4b+1},$$ therefore $b-e>0$. We proceed by contradiction. Assume there exists  $M$ and $N$ of infinite projective dimension with $\Tor^R_i(M,N)=0$ for $i\gg 0$. By  replacing $M$ and $N$ with sufficiently high syzygies, we may assume $M$ and $N$ are MCM.  From Proposition \ref{eventInc} (2) it follows that $\min\{\lr_R(M),\,\lr_R(N)\}\gs b-e$. Therefore, by  Lemma \ref{fkCom} (1) we have $e\gs (b-e)^2$. After rearranging terms, this inequality becomes $$e^2-(2b+1)e+b^2\ls 0,$$
which implies $e\gs \frac{2b+1-\sqrt{4b+1}}{2}$. This contradiction proves the result.
 \end{proof}

We are now ready to present the second main result of the section. In this theorem, we prove a stronger result than Theorem \ref{main} under the  extra assumption that the ring is  generalized Golod (cf. \cite[3.1]{HS04}).

\begin{thm}\label{maingengolod}
Let $R$ be a CM local ring that is also generalized Golod and set $c =  \codim(R)$, $e=e(R)$,  and $\ell=\elll(R)$. Assume 
\begin{equation}\label{secondIn}
e\ls2c+\ell-3
\end{equation}
then for any $R$-modules $M$ and $N$, we have
\begin{enumerate}
\item[$(1)$] If $\Tor_i^R(M,N)=0$ for $i\gg 0$, then $\cx_R(M)\ls 1$, or, $\cx_R(N)\ls 1$.
\item[$(2)$] If $\Ext^i_R(M,N)=0$ for $i\gg 0$, then $\cx_R(M)\ls 1$, or, $\px_R(N)\ls 1$.
\item[$(3)$] If, additionally, $e\ls2c+\ell-4$, then $R$ satisfies trivial  vanishing.
\end{enumerate}
\end{thm}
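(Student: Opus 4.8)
The plan is to prove (1) first, then deduce (2) and (3) from it. Write $\lambda := 2c - e + \ell - 2$, so that \eqref{secondIn} reads $\lambda \ge 1$, while the extra hypothesis in (3) reads $\lambda \ge 2$. I would begin by disposing of small codimension: if $c = 0$ then $R$ is regular and there is nothing to prove; if $c = 1$ then every Artinian reduction of $R$ is of the form $k[t]/(t^m)$, so $e = \elll(R)$ and \eqref{secondIn} is vacuous; and if $c = 2$ then after completion $R$ is a complete intersection or a Golod ring, in which cases (1)--(3) follow from \cite{Jorgensen}, \cite{AvrBuch}, and Theorem \ref{necess}(3). So I may and will assume $c \ge 3$ throughout.

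The crucial preliminary is a trichotomy: under \eqref{secondIn} with $c \ge 3$, every $R$-module $M$ satisfies $\cx_R(M) \in \{0, 1, \infty\}$, and when $\cx_R(M) = \infty$ the limit $\lr_R(M) = \lim_n \beta^R_{n+1}(M)/\beta^R_n(M)$ exists and exceeds $1$. The last assertion is exactly Proposition \ref{ratLimExists}. For the trichotomy, assume $\pd_R M = \infty$: if $\lambda \ge 2$, then Proposition \ref{eventInc}(2) forces $\beta^R_n(M) \ge 2\beta^R_{n-1}(M)$ for $n \gg 0$, so $\cx_R(M) = \infty$; if $\lambda = 1$, then $2c - e + \ell - 2 = 1$ and Proposition \ref{eventInc}(3) (which requires $c \ge 3$) says the Betti sequence of $M$ is eventually constant or grows at least geometrically, i.e. $\cx_R(M) \le 1$ or $\cx_R(M) = \infty$. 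Invoking Proposition \ref{eventInc}(3) here is the heart of the matter, and it is the main obstacle to handling $c \le 2$ uniformly.

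To prove (1), replace $M$ and $N$ by high syzygies so that they are non-free MCM modules with $\Tor^R_i(M,N) = 0$ for all $i > 0$; if some syzygy becomes free, the corresponding module has finite projective dimension and we are done. Suppose for contradiction that $\cx_R(M) \ge 2$ and $\cx_R(N) \ge 2$. By the trichotomy both complexities are infinite, so by Proposition \ref{ratLimExists} the limits $\gamma_M := \lr_R(M)$ and $\gamma_N := \lr_R(N)$ exist and are $> 1$. Dividing the estimate in Proposition \ref{eventInc}(1) by $\beta^R_{n-1}(M)$ and letting $n \to \infty$ gives $\gamma_M^2 - c\gamma_M + (e - c - \ell + 2) \ge 0$; since \eqref{secondIn} forces $e - c - \ell + 2 \le c - 1$, the quadratic $x^2 - cx + (e-c-\ell+2)$ has real roots, its smaller root is at most $1 < \gamma_M$, and its larger root is at least $c - 1$, so $\gamma_M \ge c - 1$; symmetrically $\gamma_N \ge c - 1$. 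Now Lemma \ref{fkCom}(2) yields $(c-1)^2 \le \gamma_M \gamma_N \le e - c - \ell + 2 \le c - 1$, forcing $c \le 2$, a contradiction. (If $e - c - \ell + 2 < 0$, the conclusion of Lemma \ref{fkCom}(2) is already absurd.)

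For (2), complete to assume $R$ has a canonical module, fix an MCM approximation $0 \to Y \to L \to N \to 0$, and recall that $\Ext^i_R(M,L) = 0$ for $i \gg 0$ by Remark \ref{repl}, while Remark \ref{sameCx} gives $\px_R(N) = \cx_R(L^\vee)$ with $L^\vee$ MCM. After replacing $M$ by a sufficiently high, hence MCM, syzygy $M'$ with $\Ext^i_R(M', L) = 0$ for all $i \ge 1$, Lemma \ref{exp}(2) yields $\Tor^R_i(M', L^\vee) = 0$ for all $i \ge 1$; applying (1) to $M'$ and $L^\vee$ gives $\cx_R(M) = \cx_R(M') \le 1$ or $\px_R(N) = \cx_R(L^\vee) \le 1$. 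For (3), the hypothesis $e \le 2c + \ell - 4$ means $\lambda \ge 2$, so by Proposition \ref{eventInc}(2) any module of infinite projective dimension has geometrically growing Betti numbers; hence $\cx_R(-) \le 1$ forces finite projective dimension. Combined with (1), this shows that $\Tor^R_i(M,N) = 0$ for $i \gg 0$ implies $M$ or $N$ has finite projective dimension, i.e. $R$ has trivial $\Tor$-vanishing, and Theorem \ref{equiv} promotes this to trivial vanishing.
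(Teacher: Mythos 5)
Your proof is correct and follows essentially the same route as the paper: reduce to $c\ge 3$, use Proposition \ref{eventInc}(2),(3) to establish that every module has complexity in $\{0,1,\infty\}$, then combine Proposition \ref{ratLimExists}, the quadratic inequality derived from Proposition \ref{eventInc}(1), and Lemma \ref{fkCom}(2) to derive the contradiction, with (2) and (3) deduced via Lemma \ref{exp}, Remark \ref{sameCx}, and Theorem \ref{equiv}. The only cosmetic difference is in the final algebra: you observe that $\gamma_M,\gamma_N\ge c-1$ and hence $(c-1)^2\le\gamma_M\gamma_N\le b\le c-1$ forces $c\le 2$, whereas the paper computes $\gamma_M\gamma_N\ge\bigl(\frac{c+\sqrt{c^2-4b}}{2}\bigr)^2>b$ directly; both are valid and equivalent.
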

\begin{proof}
By Proposition \ref{pass} we may assume $R$ is complete. By Theorem  \ref{necess} (1), (3) and \cite{AvrBuch} the conclusion holds if $c\ls 2$.  Therefore, we can assume $c\gs 3$. By  the assumption and Proposition \ref{eventInc} (2), (3), every $R$-module has either infinite complexity, or complexity at most one.  We note that (2) follows from (1), Lemma \ref{exp} (2), and Remark \ref{sameCx}. Then we only show the proof of $(1)$ and $(3)$.

We begin with $(1)$.  By  replacing $M$ and $N$ with sufficiently high syzygies, we may assume $M$ and $N$ are MCM.   We proceed by contradiction. Assume $\cx_R (M)=\cx_R(N)=\infty$. 
 In order to simplify the notation, set $b:=e-c-\ell+2$. By Proposition \ref{ratLimExists}  the limits in $\lr_R(M)$ and $\lr_R (N)$ exist and are larger than one. Let $\gamma$ be either one of these limits. By dividing the inequality in Proposition \ref{eventInc} (1) by $\beta^R_{n-1}($-$)$ and taking $\displaystyle\lim_{n\to\infty}$ we obtain  $\gamma\gs c -\frac{b}{\gamma}$. 
Consider the polynomial $p(z)=z^2 - cz +b $ and notice $p(\gamma)\gs 0$. By assumption $p(z)$ has  positive discriminant,
$c^2-4b \gs 4(2c+\ell-3-e)> 0.$
Hence, $p(z)$ has only real roots. Moreover, $c^2-4b  -(c-2)^2\gs 0.
$
Therefore, $\frac{c-\sqrt{c^2-4b }}{2}\ls 1.$
 Since $\gamma>1$,  we have $\gamma \gs \frac{c+\sqrt{c^2-4b }}{2}$. Therefore, 
 $$ \lr_R(M) \lr_R(N)\gs \frac{(c+\sqrt{c^2-4b })^2}{4} > \frac{(c+\sqrt{c^2-4b })(c-\sqrt{c^2-4b })}{4} =b,$$
 which contradicts Lemma \ref{fkCom}. This finishes the proof of (1).

We continue with (3).  Under the  extra assumption $e\ls 2c+\ell-4$, Proposition \ref{eventInc} (2) implies that if an $R$-module has finite complexity, then  it has finite projective dimension. This concludes the proof.
\end{proof}

\begin{remark}\label{proofOUL}
We remark that the proof of Theorem \ref{maingengolod} requires less from $R$ than being as in Example \ref{gGolod}, or even generalized Golod. Indeed, it only requires that the limit  $\displaystyle\lim_{n\rightarrow \infty}\frac{\beta_{n+1}^R(M)}{\beta_{n}^R(M)}$ exist under vanishing of $\Tor$ hypothesis. This strategy was used in \cite[3.1]{HS04} to prove a similar result under the assumption $\fm^3=0$. 
\end{remark}

\begin{remark}
By Theorem \ref{necess} (2), it follows that there is no CM local ring $R$ for which $\hat{R}$ has an embedded deformation, $\codim(R)\gs 2$, and such that it satisfies  \eqref{firstIn} or the assumption of Theorem \ref{maingengolod} (3).
\end{remark}

A CM local ring $R$ is {\it stretched} if $e(R) = \codim(R)+\elll(R)-1$ (\cite{Sally}). In \cite[5.4]{CDGetc} it is proved that this class of rings are generalized Golod. As a corollary of Theorem \ref{maingengolod} we recover the following  result which originally appeared in  \cite{Gupta}.

\begin{cor}\label{stretched}
Let $R$ be a stretched CM local ring such that $\codim(R)\gs 3$, then $R$ satisfies trivial vanishing.
\end{cor}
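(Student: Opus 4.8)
The plan is to reduce Corollary \ref{stretched} to Theorem \ref{maingengolod} by verifying its two hypotheses for a stretched CM local ring $R$ with $c := \codim(R) \geq 3$. First I would recall that stretched rings are generalized Golod by \cite[5.4]{CDGetc}, which the excerpt already cites; this disposes of the structural hypothesis of Theorem \ref{maingengolod}. Then the remaining task is purely numerical: by definition of stretched, $e(R) = \codim(R) + \elll(R) - 1 = c + \ell - 1$, and I need to check that this forces $e(R) \leq 2c + \ell - 4$, i.e. the hypothesis of part (3) of Theorem \ref{maingengolod}. Substituting, the inequality $c + \ell - 1 \leq 2c + \ell - 4$ is equivalent to $3 \leq c$, which is exactly our standing assumption. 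Hence Theorem \ref{maingengolod}(3) applies and $R$ satisfies trivial vanishing.

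In more detail, the argument is: since $R$ is stretched, $e = c + \ell - 1$; since $c \geq 3$ we have $c - 3 \geq 0$, so $e = c + \ell - 1 = (2c + \ell - 4) - (c - 3) \leq 2c + \ell - 4$; now invoke Theorem \ref{maingengolod}(3) (using that $R$ is generalized Golod by \cite[5.4]{CDGetc}) to conclude. One should note that the reductions built into the definitions make this legitimate even though $R$ need not itself be Artinian: the invariants $c$, $e$, and $\ell$ are preserved under passing to an Artinian reduction with infinite residue field (Remark \ref{reductions} and Definition \ref{Low}), and trivial vanishing descends and ascends appropriately by Proposition \ref{finite flat}(1) and Proposition \ref{pass}, all of which is already packaged inside Theorem \ref{maingengolod}.

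I do not anticipate a genuine obstacle here; this is a one-line inequality check feeding into the main theorem of the section. The only point requiring minor care is making sure the two cited inputs line up exactly — that "generalized Golod" in \cite[5.4]{CDGetc} is the same notion used in Theorem \ref{maingengolod}, and that the definition of "stretched" being used ($e = c + \ell - 1$, via \cite{Sally}) matches the one under which \cite{CDGetc} proves the generalized Golod property. Both are consistent with the conventions fixed earlier in the paper, so the proof is immediate. It is also worth remarking that the corollary is sharp in the sense that the case $c = 2$ is genuinely excluded (a stretched ring of codimension $2$ can be a complete intersection, e.g. $k[[x,y]]/(x^2,y^2)$ after completion), consistent with Theorem \ref{necess}(2)–(3).
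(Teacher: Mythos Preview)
Your proposal is correct and matches the paper's approach exactly: the paper presents this corollary with no explicit proof, only the preceding sentences noting that stretched rings are generalized Golod by \cite[5.4]{CDGetc} and that the result follows from Theorem \ref{maingengolod}. Your verification of the inequality $e = c + \ell - 1 \leq 2c + \ell - 4$ when $c \geq 3$ is precisely the intended one-line check.
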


The following proposition provides information from vanishing of Ext and Tor over a generalized Golod rings when one knows the Poincar\'e series and multiplicity (cf. \cite[1.5]{Sega}). We recall the definition of $\DenRt$ was given in Theorem \ref{gGolod}.

\begin{prop}
 
Let $R$ be a CM local ring that is also generalized Golod and and set $c =  \codim(R)$, $e=e(R)$,  and $\ell=\elll(R)$. Let $\rho = \min\{\sqrt{e}-1,\,\sqrt{e-c-\ell+2} \}$ and assume $\DenRt$ does not have real roots in the interval $[1/\rho, 1)$, then
\begin{enumerate}
\item[$(1)$] If $\Tor_i^R(M,N)=0$ for $i\gg 0$, then either $\cx_R(M) $ or $\cx_R(N)$ is finite.
\item[$(2)$] 
If $\Ext^i_R(M,N)=0$ for $i\gg 0$, then either $\cx_R(M)$ or $\px_R(N)$ is finite.
\end{enumerate}
\end{prop}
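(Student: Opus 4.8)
The plan is to mimic the proof of Theorem \ref{maingengolod}, with the key difference that the existence of the limit $\lim_{n\to\infty}\beta_{n+1}^R(M)/\beta_n^R(M)$ and its value being controlled is extracted not from Proposition \ref{ratLimExists} directly, but from the hypothesis on the roots of $\DenRt$ via Theorem \ref{AvrDen}. Since (2) follows from (1) together with Lemma \ref{exp} (2) and Remark \ref{sameCx} exactly as in Theorem \ref{maingengolod}, I would only prove (1). First I would reduce to the case where $R$ is complete and Artinian using Proposition \ref{pass} and Remark \ref{reductions} (none of $c$, $e$, $\ell$, $\rho$, or $\DenRt$ changes), and assume for contradiction that both $M$ and $N$ have infinite complexity; replacing them by high syzygies we may take both to be non-free MCM with $\Tor_i^R(M,N)=0$ for all $i>0$.

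The heart of the argument is that for a generalized Golod ring, if $\cx_R(M)=\infty$ then the limit $\lr_R(M)=\lim_n \beta_{n+1}^R(M)/\beta_n^R(M)$ exists and equals $1/\sigma$, where $\sigma$ is the smallest magnitude among the roots of $\DenRt$ that are actually roots of the denominator of the reduced rational expression $P_M^R(t)=p_M(t)/\DenRt$; more precisely, by Theorem \ref{AvrDen} the Poincaré series is rational with denominator $\DenRt$, and the exponential growth rate of its coefficients is the reciprocal of the modulus of the pole of $P_M^R(t)$ of smallest modulus. Since $\cx_R(M)=\infty$, this pole must be a root of $\DenRt$ of modulus strictly less than $1$ (a module of finite complexity would force all poles to lie on or outside the unit circle / the pole at worst on $|t|=1$), and being a root of the integer polynomial $\DenRt$ that is a genuine pole, standard Poincaré-series/generating-function analysis (partition of $p_M/\DenRt$ into partial fractions, cf.\ the reasoning behind \cite[4.2.6]{Avr} and \cite{Sun98}) shows the limit ratio exists and equals the reciprocal of that smallest-modulus root, hence lies in $(1,\infty)$. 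I would cite Proposition \ref{ratLimExists} for the mere existence of the limit, so the only new input needed is: the common value $\gamma := \lr_R(M)$ of this limit is $1/\sigma_M$ where $\sigma_M$ is a real... wait --- the pole could be complex. Here is where the "no real roots in $[1/\rho,1)$" hypothesis enters: combining Proposition \ref{eventInc} (2) and Lemma \ref{fkCom} (1) with the vanishing of $\Tor$ forces $\lr_R(M),\lr_R(N)\in(1,\sqrt{e}\,]$ and $\lr_R(M)\lr_R(N)\le e-c-\ell+2$ (using Lemma \ref{fkCom} (2), valid since the limits exist), so $\gamma=\lr_R(M)\le\rho$, i.e. the corresponding pole $\sigma_M=1/\gamma$ satisfies $\sigma_M\in[1/\rho,1)$; but a generalized-Golod Poincaré series whose smallest-modulus pole is real at that location is excluded by hypothesis, while if that pole is non-real its complex conjugate is also a pole of the same modulus and these would produce oscillation incompatible with the existence of the limit $\lr_R(M)>1$ unless... this last point is the delicate one.

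Let me restructure to make the logic clean. After reducing and assuming $\cx_R(M)=\cx_R(N)=\infty$, I invoke Proposition \ref{ratLimExists}: the limits $\gamma_M:=\lr_R(M)$ and $\gamma_N:=\lr_R(N)$ exist and are $>1$. By Theorem \ref{AvrDen}, $P_M^R(t)=p_M(t)/\DenRt$; clearing common factors we may write $P_M^R(t)=q_M(t)/D_M(t)$ in lowest terms with $D_M(t)\mid \DenRt$ and all roots of $D_M$ actual poles. Since the coefficients $\beta_n^R(M)$ grow, have limiting ratio $\gamma_M>1$, and are nonnegative, a Pringsheim-type argument (the generating function of a nonnegative sequence has a singularity at its radius of convergence, which is real positive) shows $1/\gamma_M$ is itself a root of $D_M$, hence a real root of $\DenRt$ in the open interval $(0,1)$ --- and in fact it is the root of $\DenRt$ of smallest modulus among those dividing $D_M$, so $1/\gamma_M\ge\min\{|\zeta|:\DenRt(\zeta)=0\}$, but more importantly $1/\gamma_M$ lies in $(0,1)$ and is real. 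Now from $\Tor_i^R(M,N)=0$ and Lemma \ref{fkCom}: part (1) gives $(\gamma_M+1)(\gamma_N+1)\le e$ hence $\gamma_M\le\sqrt{e}-1$; part (2) (applicable since both limits exist) gives $\gamma_M\gamma_N\le e-c-\ell+2$ hence, with $\gamma_N>1$, $\gamma_M\le e-c-\ell+2$, and combining, $\gamma_M\le\rho$ provided $\rho$ is defined as $\min\{\sqrt e-1,\sqrt{e-c-\ell+2}\}$ --- wait, I need $\gamma_M^2\le\gamma_M\gamma_N\cdot(\text{something})$; actually since $\gamma_M\le\gamma_M\gamma_N\le e-c-\ell+2$ is too weak, but $\gamma_M\le\sqrt{(\gamma_M+1)(\gamma_N+1)}-$ hmm. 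The cleanest: $\gamma_M+1\le\sqrt{(\gamma_M+1)(\gamma_N+1)}$ is false in general; instead use $(\gamma_M+1)^2\le(\gamma_M+1)(\gamma_N+1)\le e$ only if $\gamma_M\le\gamma_N$. So WLOG $\gamma_M\le\gamma_N$; then $(\gamma_M+1)^2\le e$ and $\gamma_M^2\le e-c-\ell+2$, so $\gamma_M\le\rho$. Thus $1/\gamma_M\in[1/\rho,1)$ is a real root of $\DenRt$, contradicting the hypothesis. This completes (1), and (2) follows as noted.

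The main obstacle I expect is the Pringsheim/partial-fractions step --- rigorously justifying that for a generalized Golod ring with $\cx_R(M)=\infty$ the reciprocal of the limit ratio $\lr_R(M)$ is a \emph{real} root of $\DenRt$ lying in $(0,1)$, rather than merely bounding the limit ratio by reciprocals of root moduli. Proposition \ref{ratLimExists} (Sun) already delivers existence of the limit and that it exceeds $1$; the extra content is a standard fact about generating functions of nonnegative sequences with rational generating function (the smallest-modulus pole is positive real and equals the radius of convergence, which here is $1/\gamma_M$), but one must additionally note the radius of convergence is $<1$ precisely because $\cx_R(M)=\infty$ (finite complexity $\Leftrightarrow$ coefficients grow polynomially $\Leftrightarrow$ the only poles are on $|t|=1$). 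I would state this as a short lemma with reference to \cite[Section 4]{Avr} and \cite{Sun98}, then the rest is the same elementary algebra with $\rho$ as in Theorem \ref{main} and Theorem \ref{maingengolod}.
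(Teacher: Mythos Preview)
Your proposal is correct and follows essentially the same approach as the paper, merely organized as the contrapositive: the paper uses the hypothesis on roots of $\DenRt$ (together with the Pringsheim-type fact that $1/\lr_R(M)$ is a real root of $\DenRt$ in $(0,1)$ when $\cx_R(M)=\infty$) to force $\min\{\lr_R(M),\lr_R(N)\}>\rho$, then derives a contradiction from Lemma~\ref{fkCom}, whereas you use Lemma~\ref{fkCom} first to force $\min\{\gamma_M,\gamma_N\}\le\rho$ and then contradict the root hypothesis. The technical step you flag as the main obstacle---that $1/\lr_R(M)$ is a genuine real root of $\DenRt$ lying in $(0,1)$---is exactly what the paper handles by citing \cite[(2.3)]{Avr89} together with Theorem~\ref{AvrDen} and \cite[Corollary]{Sun98}, so you may simply invoke those references rather than reproving the Pringsheim/partial-fractions lemma.
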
 
\begin{proof}
 As in the proof of Theorem \ref{maingengolod}, it suffices to show (1), and also we may assume $M$ and $N$ are MCM. We proceed by contradiction. Assume  $\cx_R (M)=\cx_R (N)=\infty$.  By \cite[(2.3)]{Avr89}, Theorem \ref{AvrDen}, and \cite[Corollary]{Sun98} it follows that  $\min\{\lr_R(M),\lr_R(N)\}>\rho$. This  contradicts Lemma \ref{fkCom}. 
\end{proof}

We obtain the following result about trivial vanishing for rings of small multiplicity. 
We remark that this result is optimal in the sense that  the conclusion does not hold for higher multiplicities (see Example \ref{TheE}). (cf. \cite[1.2]{GP90})

\begin{prop}\label{lowdim}
Let $R$ be a CM local ring such that $\codim(R)\neq 1$. If $e(R)\ls 7$, or $e(R)\ls 11$ and $R$ is Gorenstein, then $R$ satisfies trivial vanishing if and only if $\hat{R}$ has no embedded deformation.
\end{prop}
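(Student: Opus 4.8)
The plan is to reduce to the Artinian case and then prove the statement by case analysis according to codimension $c := \codim(R)$, in each case exhibiting enough structure to invoke either Theorem \ref{necess}, Theorem \ref{maingengolod}, or a direct argument. By Proposition \ref{pass} and Theorem \ref{necess} (2), it suffices to assume $R$ is Artinian with infinite residue field and to show: if $\hat R$ has no embedded deformation, then $R$ satisfies trivial vanishing. The cases $c \le 1$ and $c = 2$ are immediate from Theorem \ref{necess} (1) and (3). For $c \ge 3$, I would split into the non-Gorenstein case (where $e \le 7$) and the Gorenstein case (where $e \le 11$).

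\textbf{Non-Gorenstein case, $e \le 7$, $c \ge 3$.} Here I want to run the generalized Golod machine of Theorem \ref{maingengolod} (3), whose hypothesis is $e \le 2c + \ell - 4$, where $\ell = \elll(R)$. Since $R$ is Artinian with $\fm^\ell = 0$, one always has $e = \lambda(R) \ge 1 + c + (\ell - 2) \cdot 1 = c + \ell - 1$ when $\ell \ge 2$ (each graded piece $\fm^i/\fm^{i+1}$ for $2 \le i \le \ell-1$ is nonzero), so small $e$ forces both $c$ and $\ell$ to be small; in particular $e \le 7$ and $c \ge 3$ leave only finitely many possible $h$-vectors $(1, c, h_2, \dots)$. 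For each such $h$-vector I would check that $e \le 2c + \ell - 4$, so that Theorem \ref{maingengolod} (3) applies \emph{provided $R$ is generalized Golod}; the reason $R$ is generalized Golod in each of these cases is item (3) of Example \ref{gGolod} ($\mu(\fm) - \depth R = c \le 3$) together with items (12)--(13) for the stretched/almost stretched/compressed cases that arise. The one genuinely delicate subcase, flagged in the text after Theorem B, is the $h$-vector $(1,4,2)$ (so $c = 4$, $e = 7$, $\ell = 3$): here $2c + \ell - 4 = 7 = e$, so the inequality still holds, but generalized Goloded-ness is not known; for this case I would argue directly, using that $\fm^3 = 0$ puts us in the setting of \cite[3.1]{HS04} (see Remark \ref{proofOUL}), i.e., the limit ratio exists under vanishing of Tor, which is all Theorem \ref{maingengolod}'s proof actually needs.

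\textbf{Gorenstein case, $e \le 11$, $c \ge 3$.} Now $R$ is Artinian Gorenstein, and by Example \ref{gGolod} (9) (Gupta, \cite[6.9]{Gupta}) such $R$ is generalized Golod. If $c = 3$ or $c = 4$, Theorem \ref{necess} (4) gives the conclusion directly (no multiplicity hypothesis needed). If $c \ge 5$, I would again want $e \le 2c + \ell - 4$. For Gorenstein Artinian rings the $h$-vector is symmetric, $h_i = h_{\ell - i}$, which forces $e = \lambda(R)$ to be fairly large relative to $c$ once $\ell \ge 3$; combined with $e \le 11$ and $c \ge 5$ this pins down the $h$-vector to a short list (e.g. $(1,5,5,1)$, $(1,6,1)$, $(1,5,1)$, and their kin), and in each the inequality $e \le 2c + \ell - 4$ must be verified. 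The borderline cases where $2c + \ell - 3 = e$ (so only Theorem \ref{maingengolod} (1)--(2) applies, not (3)) need the extra observation that an embedded deformation is being ruled out: a codimension-$\ge 2$ Gorenstein ring with a module of complexity exactly $1$ and vanishing Tor would, via Theorem \ref{maingengolod} (1) and results of \cite{AvrBuch}/\cite{Sega}, force an embedded deformation, contradicting the hypothesis — so I would spell out that complexity-$\le 1$ plus Gorenstein plus no embedded deformation implies the complexity is actually $0$.

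\textbf{Main obstacle.} The arithmetic of the case analysis is routine; the real work is (a) handling the $h$-vector $(1,4,2)$ ring without knowing it is generalized Golod — this is where I expect to lean on the $\fm^3 = 0$ argument of \cite{HS04} as in Remark \ref{proofOUL} — and (b) correctly closing the borderline cases $e = 2c + \ell - 3$ by converting ``some module has complexity $\le 1$'' into ``$\hat R$ has an embedded deformation'', so that the no-deformation hypothesis does its job. I would want to double-check the lower bounds on $e$ in terms of $c$ and $\ell$ (using $\fm^i/\fm^{i+1} \neq 0$ for $i < \ell$, and symmetry in the Gorenstein case) to be sure the list of $h$-vectors is complete, since an omitted $h$-vector would be a genuine gap.
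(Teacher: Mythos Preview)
Your overall architecture is sound, but you are making the $e\le 7$ case much harder than necessary, and this is precisely where your ``main obstacle (a)'' comes from. The paper does \emph{not} use Theorem~\ref{maingengolod} here; it uses Theorem~\ref{main}, which has no generalized Golod hypothesis. After the reductions to $c\ge 4$ and $\ell\ge 3$ (the latter via \cite[1.1]{JS}, since $\fm^2=0$ is Golod), one simply checks that the right-hand side of \eqref{firstIn} is at least $\tfrac{21-\sqrt{41}}{2}\approx 7.30$, so $e\le 7$ already satisfies the strict inequality. This disposes of the $(1,4,2)$ case with no special pleading: you never need to know whether that ring is generalized Golod, and you never need the $\fm^3=0$ machinery of \cite{HS04}. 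Your route through Theorem~\ref{maingengolod}(3) plus an $h$-vector enumeration plus the $\fm^3=0$ workaround can be made to work, but it is strictly more labor for the same conclusion.

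In the Gorenstein case your plan is close to the paper's, with two corrections. First, your sample $h$-vector $(1,5,5,1)$ has $e=12>11$; in fact, symmetry of the $h$-vector together with $c\ge 5$ and $e\le 11$ forces $\ell=3$ and $e=c+2$, so the ``short list'' is just the single family $(1,c,1)$ with $5\le c\le 9$. Second, once $\ell=3$ and $e=c+2$ you have $e\le 2c-1=2c+\ell-4$ for all $c\ge 3$, so Theorem~\ref{maingengolod}(3) applies outright and there is no genuine borderline case $e=2c+\ell-3$ to worry about. Your proposed mechanism for obstacle~(b) --- deducing an embedded deformation from a module of complexity exactly one via \cite{AvrBuch}/\cite{Sega} --- is not what those references provide; the paper instead invokes \cite[1.1]{GP90} to pass from complexity $\le 1$ to finite projective dimension under the standing numerical hypotheses. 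So your instinct that bounded Betti numbers should collide with ``no embedded deformation'' is right, but the citation and the logical route are off.
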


We first discuss when  $\hat{R}$ has embedded deformations.
\begin{remark}\label{lowmultRmk}
 Write $\hat{R}=P/I$ where $(P,\fn)$ is regular and $I\subseteq \fp^2$. Set $n=\mu(I)$. Under the assumptions of Proposition \ref{lowdim}, one observes from the following proof that if $R$ does not satisfy trivial vanishing, then $\codim(R)\ls 3$, or $\codim(R)=4$ in the Gorenstein case.  In these cases, by \cite[3.6]{Avr89} we have that $\hat{R}$ has a embedded deformation if  and only if it is  a complete intersection, of type {\bf H($\bm{n-1,n-2}$)}  for $n\gs 4$ (cf. Remark \ref{deform3Rmk}), or of type {\bf G($\bm{n-1}$)}  for $n\gs 5$
\end{remark}

\begin{proof}[Proof of Proposition \ref{lowdim}]
 The forward direction is  Theorem \ref{necess} (2); we prove the backward one.   
  By  Theorem \ref{equiv} it suffices to show $R$ satisfies trivial Tor-vanishing.
Set $c=\codim(R)$ and $\ell=\elll(R)$. By Theorem \ref{necess} (3),(4) we may further assume $c\gs 4$, and $c\gs 5$ if $R$ is Gorenstein. Moreover, by \cite[1.1]{JS} we may also assume $\ell\gs 3$.

If $e(R)\ls 7$, the result follows by applying  Theorem \ref{main}, then we assume $R$ is Gorenstein and $e(R)\ls 11$. By \cite[6.9]{Gupta} the ring $R$ is generalized Golod and then  Theorem \ref{maingengolod} implies that if $\Tor_i^R(M,N)=0$ for $i\gg 0$, then either $M$ or $N$ have  complexity at most one (notice $e(R)=c+2$ if $\ell=3$). Hence, by \cite[1.1]{GP90}, one of these modules has finite projective dimension, finishing the proof.
\end{proof}

The following is a general result about codimension three CM local rings.   A similar result  for Gorenstein local rings up to codimension four was shown by \c{S}ega in \cite{Sega}. Moreover,  the authors of the  ongoing work \cite{AINS2} have informed us that they show trivial vanishing  holds if $\mu(\fm)-\depth(R)\ls 3$ (without the CM assumption) in all but some exceptional cases.

We say that a finitely generated $R$-module  $M$ is {\it periodic of period $p$ after $n$ steps} if there exist $ p\in \NN$ such that $\Omega^R_i(M)\cong \Omega^R_{i+p}(M)$ for every $i\gs n$ (cf. \cite{GP90}).

\begin{prop}\label{codim3}
Let $R$ be a CM local ring of dimension $d$  and  $\codim(R) = 3$. Then $R$ satisfies trivial vanishing if and only if $\hat{R}$ has no embedded deformation.

 Moreover, if $R$ is not a complete intersection and $M$ and $N$ are $R$-modules such that $\Tor_i^R(M,N)=0$ for $i\gg 0$, then either $M$ or $N$ is  periodic of period two after at most $d+1$ steps.
\end{prop}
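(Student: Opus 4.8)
The proof reduces the problem to an Artinian ring and then combines the Betti-number estimates of Section~\ref{mainResults} with the structure theory of codimension-three local rings. The forward implication is immediate from Theorem~\ref{necess}(2), so assume $\hat R$ has no embedded deformation; in particular $R$ is not a complete intersection, since a complete intersection of codimension at least two always has one: if $\hat R\cong Q/(g_1,\dots,g_c)$ is a minimal complete-intersection presentation with $Q$ regular, $c\geq 2$ and $(g_1,\dots,g_c)\subseteq\fp_Q^2$, then $\hat R\cong(Q/(g_1))/(g_2,\dots,g_c)$ is an embedded deformation. By Theorem~\ref{equiv} it suffices to prove trivial $\Tor$-vanishing, so suppose $\Tor_i^R(M,N)=0$ for $i\gg 0$ with $\pd_R M=\pd_R N=\infty$. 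Replacing $M,N$ by $\Omega_d^R(M),\Omega_d^R(N)$ we may assume they are maximal Cohen--Macaulay; then a maximal $R$-regular sequence $\underline x\in\fm\setminus\fm^2$ is regular on $M$, on $N$, and on all of their syzygies, and, arguing as in Proposition~\ref{pass}, we may pass to $\bar R=R/\underline x R$, an Artinian local ring of codimension three with infinite residue field which is not a complete intersection, retaining $\Tor_i^{\bar R}(\bar M,\bar N)=0$ for $i\gg 0$ and $\pd_{\bar R}\bar M=\pd_{\bar R}\bar N=\infty$, as well as the absence of embedded deformations (preserved in both directions under modding out a regular sequence in $\fm\setminus\fm^2$; cf.\ \cite{Avr89}). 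By Example~\ref{gGolod}(3), $\bar R$ is generalized Golod.

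Set $c=3$, $e=e(\bar R)$, $\ell=\elll(\bar R)$, and $b=e-c-\ell+2=e-\ell-1$, noting $b\geq 1$ since $e\geq c+\ell-1$. By Proposition~\ref{eventInc}(2),(3), every $\bar R$-module of infinite projective dimension has complexity one or infinite complexity. The crux is to rule out $\cx_{\bar R}(\bar M)=\cx_{\bar R}(\bar N)=\infty$. In that case Proposition~\ref{ratLimExists} gives that $\gamma_M:=\lr_{\bar R}(\bar M)$ and $\gamma_N:=\lr_{\bar R}(\bar N)$ exist and exceed $1$, so Lemma~\ref{fkCom} yields $\gamma_M\gamma_N\leq b$ and $(\gamma_M+1)(\gamma_N+1)\leq e$. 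If $b=1$ this already contradicts $\gamma_M\gamma_N>1$. If $b=2$, dividing the inequality of Proposition~\ref{eventInc}(1) by the preceding Betti number and letting $n\to\infty$ gives $\gamma^2-3\gamma+b\geq 0$ for $\gamma\in\{\gamma_M,\gamma_N\}$; since $\gamma>1$ this forces $\gamma\geq\tfrac{3+\sqrt{9-4b}}{2}=2$, whence $\gamma_M\gamma_N\geq 4>b$, a contradiction. For $b\geq 3$ one invokes the classification of Tor-algebras and Poincar\'e series of codimension-three local rings (\cite{AKM}; see also \cite{Avr}): running through the classes, one checks that for a non--complete-intersection ring the common denominator $\DenRt$ of Theorem~\ref{AvrDen} has its smallest real root small enough that the limit ratio of any infinite-complexity module violates the two inequalities above (the Golod and $\mathbf H(p,q)$ families, where the curvature is closest to being too small, require the most care, with the near-equality cases treated by applying the same analysis one syzygy deeper). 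Hence one of $\bar M,\bar N$, say $\bar N$, has complexity one.

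Having bounded Betti numbers, $\bar N$ is eventually periodic of period dividing two after at most one step, by the Gasharov--Peeva boundedness-versus-periodicity theorem in codimension three (\cite[1.1]{GP90}); since $\pd_{\bar R}\bar N=\infty$ these periodic syzygies are nonzero. But a codimension-three ring carrying such a module necessarily has an embedded deformation (again \cite{Avr89}, \cite{GP90}: complexity-one modules occur over a codimension-three ring exactly when it admits an embedded deformation), contradicting our reduction. This proves $R$ satisfies trivial vanishing, hence the equivalence. For the ``moreover'' statement we rerun the argument assuming only that $R$ is not a complete intersection: if $\pd_R M$ or $\pd_R N$ is finite, then by the Auslander--Buchsbaum formula the corresponding syzygy vanishes from step $d+1$ on and the conclusion is immediate; otherwise both are infinite, the previous two paragraphs produce a module, say $\bar N$, of complexity one over $\bar R$, hence eventually periodic of period two after at most one step, and an isomorphism $\Omega_i^{\bar R}(\bar N)\cong\Omega_{i+2}^{\bar R}(\bar N)$ lifts — via Nakayama applied to the maximal Cohen--Macaulay syzygies $\Omega_{i+d}^R(N),\Omega_{i+d+2}^R(N)$ modulo $\underline x$ — to $\Omega_{i+d}^R(N)\cong\Omega_{i+d+2}^R(N)$; thus $N$ is periodic of period two after at most $d+1$ steps.

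The principal obstacle is the case $b\geq 3$: one must extract from the codimension-three classification a lower bound on the limit ratio of infinite-complexity modules sharp enough to defeat both inequalities of Lemma~\ref{fkCom}, which entails a run through the Tor-algebra types and their Poincar\'e denominators, with separate treatment of the degenerate cases where equality nearly holds. A secondary point is the bookkeeping behind the explicit bound ``$d+1$'' and ``period exactly two'', which depends on isomorphisms descending to and lifting from the Artinian quotient — valid precisely because sufficiently high syzygies are maximal Cohen--Macaulay.
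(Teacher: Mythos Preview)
Your approach for small $b$ (namely $b\le 2$) is sound, but the case $b\ge 3$ is a genuine gap, not merely a computation left to the reader. First, your claim that every $\bar R$-module of infinite projective dimension has complexity one or infinite complexity rests on Proposition~\ref{eventInc}(2),(3); for $c=3$ those statements have content only when $2c-e+\ell-2\ge 1$, i.e.\ $b\le 2$, so for $b\ge 3$ no such dichotomy is available and intermediate finite complexities are not excluded. Second, even restricting to infinite-complexity modules, the promised root analysis of $\DenRt$ does not obviously produce a lower bound on the limit ratio sharp enough to contradict Lemma~\ref{fkCom} uniformly in $e$: carrying this out case-by-case for the codimension-three types works only for small multiplicity and already requires a separate change-of-rings argument in the $\mathbf{H}(n-1,n-2)$ family, exactly where the curvature is smallest. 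You acknowledge this obstacle yourself, but it is not overcome.

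The paper's argument is structurally different and bypasses Betti-number growth entirely. Passing to $\hat R=P/I$ with $P$ regular, it uses the Avramov--Kustin--Miller description of $T=\Tor^P(R,k)$ as a trivial extension $A\ltimes W$ and compares Hilbert vectors to see that $W\neq 0$ except when $R$ is Gorenstein of type $\mathbf{G}(n)$ or of type $\mathbf{H}(n-1,\tau)$. When $W\neq 0$, trivial vanishing follows from \cite[5.3]{AINS}; type $\mathbf{G}(n)$ is handled by \cite[2.3]{Sega}; and in the remaining $\mathbf{H}(n-1,\tau)$ case \cite[1.1]{COW} forces $\tau=n-2$, whence $\hat R$ has an embedded deformation by Remark~\ref{deform3Rmk}, contradicting the hypothesis. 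For the ``moreover'', the paper again avoids complexity arguments: reducing to $M,N$ non-free MCM and (by the first part) to the case where $\hat R\cong Q/(f)$ is an embedded deformation, one gets $\codim Q=2$, hence $Q$ Golod; the Tor-vanishing transfers to $Q$ by \cite[2.6]{AY}, so one module has $\pd_Q=1$ by Auslander--Buchsbaum, and \cite[(1.6)(II)]{Avr89} then yields period-two periodicity over $R$ with the stated step bound. This lifting-to-$Q$ route makes the ``$d+1$ steps'' bound immediate, whereas your descent to the Artinian quotient followed by Nakayama lifting would, even if the complexity step were fixed, need additional care to track the step count.
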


We need the following observation prior to the proof the result
\begin{remark}\label{deform3Rmk}
In \cite[3.6]{Avr89}, Avramov explains the structure of  CM rings of $\codim(R)\ls 3$ and such that $\hat{R}$  has embedded deformation. Write $\hat{R}=P/I$ where $(P,\fn)$ is regular and $I\subseteq \fp^2$. Indeed, $\hat{R}$ has a embedded deformation if  and only if it is either a complete intersection, or of type {\bf H($\bm{n-1,n-2}$)} where $n=\mu(I)\gs 4$. In the latter case, $I$ can be assumed to be of the form $J+(x)$, where $J$ is an ideal of height two that is generated by the maximal minors of a $(n-1)\times (n-2)$  matrix with entries in $\fn$, and $x$ is regular on $P/J$.
\end{remark}

\begin{proof}[Proof of Proposition \ref{codim3}]
 We begin with the first statement. First we note that the forward implication follows from Theorem \ref{necess} (2), hence we  may focus on the backward implication.  By Proposition \ref{pass} we may assume $R$ is complete. 
 By Cohen  Structure Theorem there exits a regular local ring $(P,\fn,k)$ such that $R\cong P/I$ with $I\subseteq  \fn^2$ and by  \cite[1.3]{BE} the minimal $P$-resolution of $R$ has a DG-algebra structure. Set $T$ to be the graded $k$-algebra $\Tor^P(R,k)$. In what follows, for a graded $k$-algebra $B$ such that $B_0=k$ and $B_i=0$ for $i\gs 4$, we denote by $\H_B$ the vector $(\dim_k B_1,\,\dim_k B_2,\,\dim_k B_3)$.

Set $n=\mu(I)$, $\tau=r(R)$ the  type of $R$, and note that by assumption we have $n\gs 4$. Then we have a minimal free $S$-resolution 
$$0\to S^\tau\to S^{n+\tau-1}\to S^n\to S\to R\to 0.$$ Equivalently, $\H_T =(n,\,n+\tau-1,\, \tau)$. By \cite[2.1]{AKM}, there exists a graded $k$-algebra $A$ and a vector space $W$ such that $T$ is isomorphic to the trivial extension $A\ltimes W$. We have the following possibilities for $\H_A$ (\cite[2.1]{AKM})
\

\begin{itemize}
\item[] $R$ is of type {\bf TE}, then $\H_A=(3, \ 3, \ 0)$.
 
 \item[] $R$ is of type {\bf B}, then $\H_A=(2, \ 3, \ 1)$.
 
\item[] $R$ is of type {\bf G($\bm{r}$)}, then $\H_A=(r, \ r, \ 1)$ for some $r\gs 2$.

 \item[] $R$ is of type {\bf H($\bm{p,q}$)} then $\H_A=(p+1, \ p+q, \ q)$ for some $p,\,q\gs 1$.
 
\end{itemize}

 By comparing $\H_T$ and $\H_A$ on each of the cases above, we can immediately see that $W\neq 0$ unless $R$ is Gorenstein of type {\bf G($\bm{n}$)}, or $R$ is of type {\bf H($\bm{n-1,\tau}$)}. If $R$ is not as in the latter exceptional cases, then it follows from \cite[5.3]{AINS} that $R$ satisfies trivial vanishing. If $R$ is of type {\bf G($\bm{n}$)} then $R$ also satisfies trivial vanishing by \cite[2.3]{Sega}.  It remains to consider the case {\bf H($\bm{n-1,\tau}$)}.   By \cite[1.1]{COW} we must have $p-1=q=\tau=n-2$. The first statement now follows by Remark \ref{deform3Rmk}.

We continue with the second statement.  By  replacing $M$ and $N$ with sufficiently high syzygies, we may assume $M$ and $N$ are non-free MCM and $R$ has an embedded deformation. Let $(Q,\fp)$ be a  local ring such that $R\cong Q/(\underline{f})$ where $\underline{f}\subseteq \fp^2$ is a regular sequence.  Since $R$ is not a complete intersection, we must have $\codim(Q)= 2$, and  then $Q$ is a Golod ring \cite{Sch}. Moreover,  $\Tor_i^Q(M,N)=0$ for $i\gg 0$ (\cite[2.6]{AY}).  By Theorem \ref{necess} (3) and the Auslander-Buchsbaum formula, it follows that either $M$ or $N$ has  projective dimension one over $Q$.  The conclusion now follows from \cite[(1.6)(II)]{Avr89}. 
\end{proof}

\subsection{Examples} In the remaining part of this section, we provide examples that discuss the sharpness of our results and the necessity of the conditions. As with previous results, we set $c =  \codim(R)$, $e=e(R)$,  and $\ell=\elll(R)$.

We begin with the following example.

\begin{example}
Let $R = k[[ x,y ]]/(x^2,y^2)$. We note that in this case $e = 4$, $c=2$, and $\ell=3$. Therefore, $R$ satisfies equality in \eqref{firstIn}. However, $R$ does not satisfy trivial vanishing by Theorem \ref{necess} (2). In fact, we may consider  $M = R/xR$ and $N=R/yR$ for which we have $\Tor_i^R(M,N)=0$ for every $i>0$.
\end{example}

\begin{example}\label{SW}
Let $R_1 = k[[ x,y ]]/(x,y)^2$, $R_2=k[[z]]/(z^2)$, and $R=R_1\otimes_k R_2$. We note that $R$ has an embedded deformation, then this is an example of the exceptional case in Proposition \ref{codim3}.  By Theorem \ref{necess} (2), $R$ does not satisfy trivial vanishing. If fact,  we may consider $M = R/(x,y)R$ and $N=R/zR$. We also remark that, since $e=6$, $c=3$, and $\ell=3$, the inequality in \eqref{secondIn} is satisfied, but not the extra condition of Theorem \ref{maingengolod} (3). Hence, this example shows that this extra condition is necessary to guarantee trivial vanishing.
\end{example}

\begin{example}\label{TheE}
Using  \cite[3.1 3.4]{GP90}, the authors in \cite[3.3]{JS} provided some examples of rings that do not satisfy trivial vanishing. The first one is a Gorenstein ring such that  $e=12$, $c=5$, and $\ell=4$. The second one has $e=8$, $c=4$, and $\ell=3$. The completion of each these rings does not have an embedded deformation (\cite[3.10]{GP90}). In these examples the right hand side of \eqref{firstIn} are  approximately $9.9$ and approximately $7.3$, respectively.
\end{example}

\begin{example}\label{Gup}
Let $R$ be a Artinian Gorenstein local ring such that $c=6$, $\ell=4$, and  $\mu(\fm^2)=4$. Therefore, $e=12$. 
By \cite[6.9]{Gupta} the ring $R$   is generalized Golod. Thus, by Theorem \ref{maingengolod} (3), $R$  satisfies trivial vanishing. We note that this conclusion cannot  be obtained from Theorem \ref{main}.
\end{example}

We now present examples that show that the converse of our theorems do not hold.

\begin{example}
 Assume $R$ is a stretched CM local  ring such that $c\gs 3$. By Corollary \ref{stretched} $R$ satisfies trivial vanishing. However, if one fixes $e$ and $c$, then for $\ell\gg 0$, the  inequality  \eqref{firstIn} does not hold. If $R$ is Artinian Gorenstein and {\it almost stretched}, i.e., $\mu(\fm^2)\ls 2$, then $R$ is generalized Golod and satisfies trivial vanishing \cite[6.3]{Gupta}. However if $R$ is Artinian and has $h$-vector $(1,5,2,2,2,2,1)$, then  \eqref{secondIn} fails.
\end{example}

\begin{example}
For every integer $\tau\gs 2$, Yoshino constructed non-Gorenstein rings that do not satisfy trivial vanishing and such that $e=2\tau+2$, $c=\tau+1$, and $\ell=3$ (\cite[4.2]{Yoshino}). These examples show that for rings that do not satisfy trivial vanishing, the distance between $e$ and the right hand side of \eqref{firstIn} can be as large as possible, even if we assume the ring is not Gorenstein. Moreover, for $\tau=2$, this example also shows that the extra condition of Theorem \ref{maingengolod} (3) is necessary to conclude trivial vanishing.
\end{example}

\section{The uniform Aulander condition and the Auslander-Reiten conjecture}\label{ARSection}

In this section we prove that the Auslander-Reiten conjecture holds for a ring with small multiplicity with respect to its codimension. We also provide new classes of rings that satisfy the following condition.

\begin{definition}\label{UAC}
The ring $R$  satisfies the {\it uniform Auslander condition $($UAC$)$} if there exists $n\in \NN$ such that whenever $\Ext^i_R(M,N)=0$ for $i\gg 0$, this vanishing holds for $i\gs n$. 
\end{definition}

  In \cite{CH} the authors prove that if $R$ satisfies the UAC then it satisfies Conjecture \ref{ARC}. 
Some classes of rings that satisfy this condition are complete intesection rings (\cite{AvrBuch}), Golod rings (\cite{JS}),  CM local  rings of codimension at most two, and Gorenstein rings of codimension at most four (\cite{Sega}). 
 We refer the reader to \cite[Appendix A]{CH} for a survey on the topic and to \cite{AINS2} for related results.
 
 As a corollary of our results in the previous section, we are able to provide new classes of rings that satisfy UAC and hence Conjecture \ref{ARC}. In particular,  part (3)  implies that the examples constructed in \cite{JS} for the failure of UAC are minimal with respect to codimension, and part (4) shows they are minimal with respect to multiplicity.

\begin{cor}\label{finiteVirtual}
Let $R$ be a CM local ring and assume $R$ satisfies one of the following conditions.
\begin{enumerate}
\item[$(1)$] $R$ satisfies the inequality \eqref{firstIn}.
\item[$(2)$] $R$ is as one of $(3)$-$(8)$ in Example \ref{gGolod} and satisfies the inequality \eqref{secondIn}, or $R$ is any generalized Golod ring and satisfies the assumption of Theorem \ref{maingengolod} $(3)$.
\item[$(3)$] $\codim(R)\ls 3$.
\item[$(4)$] $e(R)\ls 7$, or $e(R)\ls 11$ and $R$ is Gorenstein.
\end{enumerate}
Then $R$ satisfies UAC and thus the Auslander-Reiten conjecture $($Conjecture \ref{ARC}$)$ holds for $R$.
\end{cor}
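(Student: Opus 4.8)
The plan is to reduce everything to the single mechanism that UAC can be detected through a uniform bound on where $\Ext$-vanishing must start, and to feed in the dichotomy results already established. First I would recall from \cite{CH} (and the remark after Definition \ref{UAC}) that it suffices to prove UAC, since UAC implies Conjecture \ref{ARC}; and I would note that UAC is insensitive to the reductions of Remark \ref{reductions} and Proposition \ref{pass}, so we may assume $R$ is complete, hence has a canonical module $\omega_R$. The key structural input is that in each of the cases $(1)$--$(4)$ we have already shown (Theorems \ref{main}, \ref{maingengolod}, Propositions \ref{lowdim}, \ref{codim3}, together with Example \ref{gGolod}) that one of two things holds: either $R$ satisfies trivial vanishing outright, or $R$ is generalized Golod and every $R$-module has complexity $0$ or $1$, with the property that if $\Ext^i_R(M,N)=0$ for $i\gg 0$ then $\cx_R(M)\ls 1$ or $\px_R(N)\ls 1$. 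In the first case UAC is automatic with $n=1$ (once $M$ and $N$ are replaced by high enough syzygies, the vanishing is forced for all positive degrees, and one backtracks through the syzygy sequences, tracking the finitely many steps lost). So the real content is the second case.

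In the generalized Golod, complexity-$\ls 1$ case, I would argue as follows. Using MCM approximations (Theorem \ref{MCMaprox}, Remark \ref{repl}) and the translation of Bass numbers into Betti numbers of the canonical dual (Remark \ref{sameCx}), we may replace $N$ by an MCM module and assume $M$, $N$ are MCM with $\Ext^i_R(M,N)=0$ for $i\gg 0$. Then Lemma \ref{exp}(2) converts this into $\Tor_i^R(M,N^\vee)=0$ for $i\gg 0$, and Theorem \ref{maingengolod}(2) (or its proof) gives $\cx_R(M)\ls 1$ or $\px_R(N)\ls 1$. Now if either of these modules has complexity/plexity $0$ it has finite projective (resp. injective) dimension and we are done with an easy bound depending only on $\dim R$. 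If not, one of them — say $M$ — has complexity exactly $1$, i.e. bounded Betti numbers. Over a generalized Golod ring, a module of complexity one is eventually periodic (this follows from rationality of the Poincaré series, Theorem \ref{AvrDen}, since $P^R_M(t)$ has a pole of order one at a root of $\Den^R(t)$ on the unit circle, forcing eventual periodicity after boundedly many steps); the period and the number of steps can be bounded uniformly in terms of $\deg \Den^R(t)$. For a module that is periodic after $N_0$ steps, $\Ext^i_R(M,N)$ is itself eventually periodic in $i$, so its vanishing for $i\gg 0$ forces vanishing for all $i\geq N_0+(\text{period})$, a bound independent of $N$. The plexity-one case is dual.

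The step I expect to be the main obstacle is making the periodicity bound genuinely uniform — i.e., extracting from the rationality in Theorem \ref{AvrDen} an explicit $n$ (depending only on $R$) after which every complexity-one module is periodic, and checking that this survives the MCM-approximation / syzygy bookkeeping without the constant silently depending on $M$ or $N$. This is essentially a statement about the denominator polynomial $\Den^R(t)$: a module $M$ with $\cx_R(M)=1$ has $P^R_M(t)=p_M(t)/\Den^R(t)$ with $\lr_R(M)=1$, so all poles lie on $|t|=1$, and since $\Den^R(t)\in\ZZ[t]$ these are roots of unity whose order divides something bounded by $\deg\Den^R(t)$; then $\beta^R_n(M)$ satisfies a fixed linear recurrence, and one reads off that $\Omega^R_n(M)$ stabilizes to a periodic module after at most $\deg\Den^R(t)+\dim R$ steps. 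The remaining pieces — gluing the two alternatives $(1)$–$(4)$, handling the reductions, and the "high syzygy'' normalization — are routine given the earlier sections, so I would keep those brief and spend the care on the uniform periodicity statement, possibly isolating it as a preliminary lemma before the proof of the corollary.
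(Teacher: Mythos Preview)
Your overall architecture (reduce to complete, split into ``trivial vanishing'' vs.\ ``complexity $\le 1$ on one side'', and handle each) matches the paper, but the heart of your argument for case~(2) has a real gap. You pass from $\cx_R(M)=1$ to eventual periodicity of the \emph{syzygies} of $M$ via rationality of $P^R_M(t)$, but rationality only controls the Betti \emph{numbers}. Two things fail: first, your claim that the roots of $\Den^R(t)$ on $|t|=1$ must be roots of unity is false for integer polynomials in general (think of Salem-type phenomena); one can still get eventual periodicity of the integer sequence $\beta_n^R(M)$ by pigeonhole on the finite state space of the recurrence, but the period then depends on the bound on the $\beta_n$, which depends on $M$, so it is not uniform. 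Second, and more seriously, eventual periodicity of Betti numbers does \emph{not} imply $\Omega_n^R(M)\cong\Omega_{n+p}^R(M)$ for some $p$, which is what you need to make $\Ext^i_R(M,-)$ periodic in $i$. That implication is exactly the hard content you are assuming away.

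The paper avoids this by never invoking periodicity in case~(2). Instead it uses the structural results specific to the classes (3)--(8) of Example~\ref{gGolod} (namely \cite[1.5]{Avr89}, \cite[proof of 5.2]{KP}, \cite[2.4, 5.2]{K}) to upgrade ``finite complexity'' to ``finite CI-dimension'' for the relevant module, and then appeals to \cite[4.7]{AvrBuch}, which gives UAC directly from finite CI-dimension with a bound depending only on $R$. This is why the statement restricts to classes (3)--(8) rather than arbitrary generalized Golod rings: finite complexity $\Rightarrow$ finite CI-dimension is not known in general. For case~(3) the paper likewise uses that, when trivial vanishing fails, the proof of Proposition~\ref{codim3} produces an embedded deformation over a codimension-two Golod ring, again yielding finite CI-dimension; case~(4) reduces to case~(3) and \cite{Sega}. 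If you want to salvage your route, the missing lemma you would need is ``over $R$ as in (3)--(8) of Example~\ref{gGolod}, $\cx_R(M)<\infty$ implies $M$ has finite CI-dimension'', which is precisely what the cited references supply.
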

\begin{proof}
We can assume that $R$ is complete (\cite[5.5]{CH}).  Let $M$ and $N$ be $R$-modules such that $\Ext^i_R(M,N)=0$ for every $i\gg 0$. By Remark \ref{repl} and by replacing $M$ for a sufficiently high syzygy, we may assume $M$ and $N$ are MCM. 

If $R$ is as in (1), then it satisfies trivial vanishing and hence UAC.

 Assume now  $R$ is as in (2). By Theorem \ref{maingengolod},  $M$ or $N^\vee$ has finite complexity. Therefore, by
 \begin{itemize}
 \item[$\vartriangleright$] \cite[1.5]{Avr89} for (3)-(6),
 \item[$\vartriangleright$]  \cite[proof of 5.2]{KP} for (7), and
 \item[$\vartriangleright$]  \cite[2.4]{Avr89}, \cite[5.2]{K} for (8),
\end{itemize}  
 we have that $M$ or $N^\vee$ has finite {\it complete intersection $($CI$)$ dimension } (see \cite{AvrBuch}). Therefore, $R$ satisfies UAC by \cite[4.7, 4.1.5]{AvrBuch}.  The second statement of (2) is clear as $R$ satisfies trivial vanishing.

We now consider (3), we may assume $\codim(R)=3$ (\cite[1.1]{JS}). In this  case Proposition \ref{codim3} and its proof show that when $R$ does not satisfy trivial vanishing, then one of $M$ or $N^{\vee}$ has finite CI-dimension. Thus the conclusion follows as before.

For (4), from the proof of Proposition \ref{lowdim} it follows that $R$ satisfies trivial vanishing if $\codim(R)\gs 4$, or if $\codim(R)\gs 5$ in the Gorenstein case.  The result now follows from (3) and \cite{Sega}. 
\end{proof}

The following is the main theorem of this section.

\begin{thm}\label{mainAR}
Let $R$ be a CM local ring and set $c=\codim(R)$. Assume $R$ satisfies one of the following conditions.
\begin{enumerate}
\item[$(1)$] $e(R)\ls \frac{7}{4}c+1$.  
\item[$(2)$]   $e(R)\ls  c+6$ and $R$ is Gorenstein.
\end{enumerate}
Let $M$ be an $R$-module such that $\Ext_i^R(M,M\oplus R)=0$ for $i\gg 0$.  Then $M$ has finite projective dimension, i.e.,  the Auslander-Reiten conjecture $($Conjecture \ref{ARC}$)$  holds for $R$.
\end{thm}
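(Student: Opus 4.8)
The plan is to reduce Theorem \ref{mainAR} to the trivial vanishing results already established, specifically Theorem \ref{main} and Theorem \ref{maingengolod}, together with the standard arguments that trivial vanishing (or bounded complexity of one module) forces the Auslander--Reiten conclusion. First I would pass to a suitable Artinian reduction: by Proposition \ref{pass}, trivial vanishing is unaffected by modding out a regular sequence in $\m \setminus \m^2$, and the Auslander--Reiten condition $\Ext^i_R(M, M\oplus R)=0$ for $i \gg 0$ descends appropriately under such quotients and along completion, so we may assume $R$ is complete Artinian with infinite residue field, preserving $c = \codim(R)$, $e = e(R)$, and having access to $\ell = \elll(R)$. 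The key numerical point is that in the Artinian case $e = \sum_i \dim_k \m^i/\m^{i+1} \gs 1 + c + (\ell - 1) = c + \ell$, equivalently $\ell \ls e - c$; so under hypothesis (1), $e \ls \tfrac{7}{4}c + 1$, we get a lower bound on $2c + \ell$ in terms of $e$ that I would check makes either \eqref{firstIn} or the generalized Golod inequality \eqref{secondIn} applicable, and similarly under (2) in the Gorenstein case with $e \ls c + 6$.

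The main case split: if the relevant inequality from Theorem \ref{main} holds, then $R$ satisfies trivial vanishing outright, and since $\Ext^i_R(M, M) = 0$ for $i \gg 0$ we would conclude from Theorem \ref{equiv} and the definition that $M$ has finite injective dimension; combined with $\Ext^i_R(M, R) = 0$ for $i \gg 0$ — which by the standard Ext-vanishing-against-$R$ argument (using that $R$ is the canonical module in the Gorenstein case, or more generally passing through $\Tor$ via Lemma \ref{exp}) forces $M$ to have finite projective dimension — we are done. In the range where only the generalized Golod inequality \eqref{secondIn} is available, I would invoke the results collected in Example \ref{gGolod}: CM rings of small codimension relative to multiplicity (codimension $\ls 3$, or Gorenstein codimension $\ls 4$, etc.) are generalized Golod, so Theorem \ref{maingengolod}(2) applies to $\Ext^i_R(M, M) = 0$, yielding $\cx_R(M) \ls 1$ or $\px_R(M) \ls 1$; in either event $M$ has \emph{bounded} Betti or Bass numbers, and then a separate argument — that a module with bounded Betti numbers satisfying $\Ext^{\gg 0}_R(M,R)=0$ over a CM ring has finite projective dimension (this is where one uses Proposition \ref{eventInc}(2) to upgrade "complexity $\ls 1$" to "finite projective dimension" once $2c - e + \ell - 2 \gs 1$, as in the proof of Theorem \ref{maingengolod}(3)) — finishes the proof. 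For the boundaries of the stated ranges not covered by either inequality, I would appeal directly to Corollary \ref{finiteVirtual} and Theorem \ref{necess}, which already handle $\codim(R) \ls 3$, $e(R) \ls 7$, and Gorenstein $e(R) \ls 11$.

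The bookkeeping obstacle I expect is verifying that the two clean hypotheses $e \ls \tfrac{7}{4}c + 1$ and (Gorenstein) $e \ls c + 6$ genuinely imply, via $\ell \gs e - c$ being unusable in that direction — rather one needs a \emph{lower} bound on $\ell$, or to argue $\ell$ is large enough — that one of the two trivial-vanishing inequalities holds, or else that the ring falls into one of the small-codimension generalized Golod classes. Concretely: when $c$ is small (so $e$ is small) one lands in the Example \ref{gGolod} cases and uses Theorem \ref{maingengolod}; when $c$ is large, $\tfrac{7}{4}c + 1$ is much smaller than the threshold $\tfrac{4c + 2\ell - 1 - \sqrt{8c + 4\ell - 3}}{2} \approx 2c$ in \eqref{firstIn} even with $\ell = 3$ (the minimum in the non-complete-intersection, non-codimension-$\ls 3$ case, justified via \cite[1.1]{JS}), so Theorem \ref{main} applies. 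The real work is exhibiting a finite list of $(c, e)$ pairs where neither argument is immediate and dispatching each by the cited structural results; I would organize this as a short table. The Gorenstein case (2) is analogous but uses the Gorenstein-specific entries of Example \ref{gGolod} (codimension $\ls 4$, $e \ls 11$) and Theorem \ref{necess}(4),(5).
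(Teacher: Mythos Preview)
Your overall strategy---reduce to the Artinian case and then feed the numerical hypotheses into Theorem \ref{main} or Theorem \ref{maingengolod}---is the right shape, but there is a genuine gap: you are missing the input that disposes of the case $\ell \ls 3$. The paper invokes \cite[4.1]{HS04}, which proves the Auslander--Reiten conclusion directly whenever $\fm^3=0$. With that in hand one may assume $\ell \gs 4$, and then a short calculation shows that the right-hand side of \eqref{firstIn}, namely $f(b)=b+\tfrac12-\sqrt{b+\tfrac14}$ with $b=2c+\ell-1$, satisfies $f(b)\gs 2c+\tfrac72-\sqrt{2c+\tfrac{13}{4}}>\tfrac74 c+1$ for \emph{all} $c$ (the last inequality reduces to $\tfrac{1}{16}(c-6)^2+\tfrac34>0$). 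So under hypothesis (1) Theorem \ref{main} applies outright once $\ell\gs 4$; no case table is needed.

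Your substitute for this step---taking $\ell=3$ as the floor via \cite[1.1]{JS} and then hoping the remaining $(c,e)$ pairs land in a known generalized Golod class---does not close. The reference \cite[1.1]{JS} only handles $\ell\ls 2$, and with $\ell=3$ the inequality \eqref{firstIn} fails for every $c\ls 20$ under hypothesis (1) (e.g.\ $c=10$, $e=18$, $h$-vector $(1,10,7)$: the threshold in \eqref{firstIn} is $\approx 17.8$). There is no entry in Example \ref{gGolod} covering generic Artinian rings with $h$-vector $(1,c,h_2)$ for $c\gs 4$, so you cannot invoke Theorem \ref{maingengolod} either, and the ``short table'' you propose would in fact be a long list of rings for which neither trivial-vanishing theorem is known to apply. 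Your observation that $\ell\ls e-c$ is correct but, as you note yourself, points the wrong way; the paper does not use it. For part (2) the same $\ell\ls 3$ reduction via \cite[4.1]{HS04} is again the key, after which $e\ls 11$ is handled by Corollary \ref{finiteVirtual}(4), $c\gs 7$ reduces to part (1), and the single residual case $c=6$, $e=12$, $\ell=4$ is exactly Example \ref{Gup}.
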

\begin{proof}
Proceeding as in Proposition \ref{finite flat} (2), we may assume $R$ is complete and $k$ is infinite. We may also replace $M$ by $\Omega_j^R(M)$ for any  $j\in \NN$  (\cite[1.2]{HL}) to assume $M$ is MCM and $\Ext_i^R(M,M\oplus R)=0$ for $i> 0$. Hence by standard arguments we may  assume $R$ is Artinian (see Remark \ref{reductions}).

We prove (1) first.  Set $\ell = \elll(R)$ and $b = 2c+\ell -1$. Notice that the right hand side of \eqref{firstIn} is  $f(b):=b+\frac12 - \sqrt{b+\frac14}$, which is  an increasing function for $b\gs 0$.  By \cite[4.1]{HS04} the conclusion holds if $\ell \ls 3$.  Thus we may assume $\ell\gs 4$ and hence $f(b)\gs 2c+\frac72 -\sqrt{2c+\frac{13}{4}}.$
Let $g(c)$ be the right hand of the last inequality, viewed as a function of $c$. It suffices to show $g(c)> \frac{7}{4}c+1$ which is equivalent to 
$\frac{1}{16}(c-6)^2+\frac{3}{4} >  0$. The result follows. 

We now prove (2). If $e(R)\ls 11$, the conclusion follows from Corollary \ref{finiteVirtual} (4), and if $c\gs 7$, it follows from part (1) of this theorem. Hence  we may assume  $c=6$ and $e(R)=12$. By  \cite[4.1]{HS04} and Theorem \ref{main},  it remains  to consider when $\ell = 4$. Therefore we are in the situation of  Example \ref{Gup}, and then $R$ satisfies trivial vanishing. This finishes the proof.
\end{proof}

\begin{cor}\label{e8}
Let $R$ be a CM local ring such that $e(R)\ls 8$. Then the Auslander-Reiten conjecture holds for $R$.
\end{cor}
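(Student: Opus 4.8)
\textbf{Proof plan for Corollary \ref{e8}.} The plan is to reduce the statement to Theorem \ref{mainAR}. Recall that Theorem \ref{mainAR}(1) establishes the Auslander-Reiten conjecture for any CM local ring $R$ with $e(R)\ls \frac{7}{4}\codim(R)+1$. So the first step is to observe that, if $\codim(R)\gs 4$, then $\frac{7}{4}\codim(R)+1\gs \frac{7}{4}\cdot 4+1=8\gs e(R)$, and we are done immediately by Theorem \ref{mainAR}(1). Hence the only cases left to treat are $\codim(R)\ls 3$.

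For $\codim(R)\ls 3$ I would invoke Corollary \ref{finiteVirtual}(3), which says that any CM local ring of codimension at most $3$ satisfies UAC, and hence (by the result of \cite{CH} quoted just before Definition \ref{UAC}) satisfies the Auslander-Reiten conjecture. This disposes of the remaining cases with no further work. It is worth noting that the hypothesis $e(R)\ls 8$ is only used to force $\codim(R)\gs 4$ into the range covered by Theorem \ref{mainAR}(1); the small-codimension cases are handled by the structural results of Section \ref{mainResults} independently of the multiplicity bound.

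I do not anticipate any real obstacle here: the corollary is a clean packaging of Theorem \ref{mainAR} together with the codimension $\ls 3$ result, and the only thing to check is the numerical inequality $\frac{7}{4}c+1\gs 8$ for $c\gs 4$, which is immediate. The one point requiring a moment's care is to make sure every codimension is covered: $c\in\{0,1,2,3\}$ falls under Corollary \ref{finiteVirtual}(3) (note $c\le 1$ is in fact already covered by Theorem \ref{necess}(1), which gives trivial vanishing and a fortiori UAC), and $c\gs 4$ falls under Theorem \ref{mainAR}(1); together these exhaust all possibilities, so the proof is complete.
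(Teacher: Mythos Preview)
Your proposal is correct and matches the paper's own proof essentially verbatim: the paper invokes Corollary \ref{finiteVirtual} to reduce to $\codim(R)\gs 4$ and then applies Theorem \ref{mainAR}, which is exactly your argument with the numerical check $\frac{7}{4}\cdot 4+1=8$ made explicit.
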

\begin{proof}
 By Corollary \ref{finiteVirtual}  we may assume $\codim(R)\gs 4$. The conclusion then follows from Theorem \ref{mainAR}.
\end{proof}

\begin{remark}
We note that if $e(R)=9$, the only CM local rings for which we do not know Conjecture \ref{ARC} holds are those  whose Artinian reductions have $h$-vectors $(1,4,3,1)$, $(1,4,2,2)$, or $(1,4,2,1,1)$.  If $e(R)=12$ and $R$ is Gorenstein, then we do not know if this conjecture holds when these  $h$-vectors are $(1,5,5,1)$, $(1,5,4,1,1)$, or $(1,5,3,2,1)$; the case $(1,5,2,2,1,1)$ follows from Theorem \ref{maingengolod} or from \cite[6.5]{Gupta}.
\end{remark}

\section{Criteria for the Gorenstein property}\label{Gorenst}

In this section we expand upon several results in the literaure which provide criteria for $R$ to be Gorenstein based on vanishing of Ext.

We begin with the following lemma. By using this result with $M=\omega_R$ , we drop the generically Gorenstein condition from \cite[2.4]{JL08} (cf. \cite[2.4]{JL07}). This in turn  allows us to provide a more general statement for \cite[2.4]{JL08}. 

\begin{lemma}[{ \cite[A.1]{CSV}}]\label{firstBetti}
Let $R$ be a CM local ring  and $M$ be an $R$-module such that $e_R(M)=e(R)$.   If $M$ is not free, then $\beta_1^R(M) \gs \beta_0^R(M)$.
\end{lemma}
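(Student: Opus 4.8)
The statement says: if $M$ is a non-free module over a CM local ring $R$ with $e_R(M)=e(R)$, then $\beta_1^R(M)\geq\beta_0^R(M)=\mu(M)$. The plan is to reduce to the Artinian case and then compare multiplicities via the syzygy sequence. First I would pass to $R[X]_{\m R[X]}$ to assume the residue field is infinite, and then mod out by a maximal $R$-regular sequence that is also a system of parameters computing the multiplicity; this preserves the hypothesis $e_R(M)=e(R)$ (both multiplicities drop to the corresponding lengths, or one uses that a general minimal reduction is regular on $M$ after a high syzygy — alternatively replace $M$ by a high syzygy first so that $M$ is MCM and $e_R(M)$ is computed from a system of parameters regular on both $R$ and $M$), and it preserves non-freeness and the Betti numbers $\beta_0,\beta_1$ since the sequence lies in $\m$. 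So without loss of generality $R$ is Artinian, $\lambda(R)=e(R)$, and $\lambda(M)=e_R(M)=e(R)=\lambda(R)$.

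Now take the start of a minimal free resolution:
\[
0 \to \Omega_1^R(M) \to R^{\beta_0} \to M \to 0 .
\]
Taking lengths gives $\lambda(\Omega_1^R(M)) = \beta_0\,\lambda(R) - \lambda(M) = (\beta_0-1)\lambda(R)$. Next consider
\[
0 \to \Omega_2^R(M) \to R^{\beta_1} \to \Omega_1^R(M) \to 0 ,
\]
so $\beta_1\,\lambda(R) = \lambda(\Omega_1^R(M)) + \lambda(\Omega_2^R(M)) = (\beta_0-1)\lambda(R) + \lambda(\Omega_2^R(M))$. Hence it suffices to show $\lambda(\Omega_2^R(M)) \geq \lambda(R)$, i.e. that $\Omega_2^R(M)\neq 0$. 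But $\Omega_2^R(M)=0$ would force $\Omega_1^R(M)$ to be free, and then from the first sequence $M$ would have projective dimension $\leq 1$; since $R$ is Artinian (hence zero-dimensional), a module of finite projective dimension over $R$ is free (Auslander–Buchsbaum: $\pd M = \depth R - \depth M = 0$), contradicting that $M$ is non-free. Therefore $\Omega_2^R(M)\neq 0$, giving $\lambda(\Omega_2^R(M))\geq 1$... which only yields $\beta_1 \geq \beta_0 - 1 + 1/\lambda(R)$, not quite enough unless $\lambda(R)=1$.

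So the genuine content — and the main obstacle — is sharpening "$\Omega_2^R(M)\neq 0$" to "$\lambda(\Omega_2^R(M))\geq \lambda(R)$". The clean way is to note that $\Omega_2^R(M)$ is itself a second syzygy, hence a submodule of a free module $R^{\beta_1}$; I claim any nonzero submodule $N$ of a free module over an Artinian local ring satisfies $\lambda(N)\geq \lambda(R)$, because $N$ contains a nonzero element $x$, and the cyclic submodule $Rx\cong R/(0:x)$ embeds in $R$; if $\lambda(R/(0:x)) < \lambda(R)$ then $(0:x)\neq 0$, which is fine, so this still doesn't immediately give $\lambda(R)$. The actual fix: use that $\Omega_1^R(M)\subseteq \m R^{\beta_0}$ (minimality of the resolution), so $\Omega_1^R(M)$ has no free summand, and more importantly apply the hypothesis $e_R(M)=e(R)$ not just through lengths but through associated graded / Hilbert function bounds, which is precisely the kind of argument in \cite[A.1]{CSV}. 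Concretely, I would argue: since $\Omega_1^R(M)\subseteq \m R^{\beta_0}$, we get $\lambda(\Omega_1^R(M)) \leq \beta_0\,\lambda(\m) = \beta_0(\lambda(R)-1)$; combined with $\lambda(\Omega_1^R(M)) = (\beta_0-1)\lambda(R)$ this forces $\beta_0 \leq \lambda(R)$, and then feeding $\lambda(\Omega_2^R(M)) = \beta_1\lambda(R)-(\beta_0-1)\lambda(R)$ together with $\Omega_2^R(M)\subseteq \m R^{\beta_1}$ gives $(\beta_1 - \beta_0 + 1)\lambda(R) \leq \beta_1(\lambda(R)-1)$, i.e. $\beta_1 \geq (\beta_0 - 1)\lambda(R)$... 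I would need to track the inequality direction carefully here, but this chain of "syzygies sit inside $\m\cdot(\text{free})$" estimates, anchored by $\lambda(M)=\lambda(R)$, is what delivers $\beta_1\geq\beta_0$. I expect the bookkeeping of these nested length inequalities, and making sure the Artinian reduction genuinely preserves $e_R(M)=e(R)$ (which needs the reduction to be a reduction of $\m$ that is also regular on $M$ — available after replacing $M$ by a high syzygy and using an infinite residue field), to be the only real work; everything else is the short exact sequence manipulation above.
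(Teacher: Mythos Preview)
The paper does not give its own proof of this lemma; it is quoted directly from \cite[A.1]{CSV}. So there is no argument in the paper to compare against, and I will simply review your proposal on its merits.

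Your length-counting approach in the Artinian reduction is correct and in fact already complete --- you just failed to notice it. You obtained
\[
\beta_1\,\lambda(R) \;=\; (\beta_0-1)\lambda(R) + \lambda(\Omega_2^R(M)),
\]
and you showed $\Omega_2^R(M)\neq 0$ (else $M$ would be free). From this you derived $\beta_1 \geq \beta_0 - 1 + 1/\lambda(R)$ and declared it ``not quite enough.'' But $\beta_1$ is an \emph{integer}, so $\beta_1 > \beta_0 - 1$ immediately gives $\beta_1 \geq \beta_0$. Equivalently, the displayed equation says $(\beta_1-\beta_0+1)\lambda(R)=\lambda(\Omega_2^R(M))>0$; since $\beta_1-\beta_0+1\in\ZZ$, it must be at least $1$. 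That is the entire argument.

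Your subsequent detour through $\Omega_2^R(M)\subseteq \m R^{\beta_1}$ is a dead end, and contains a sign error. That inclusion yields the \emph{upper} bound $\lambda(\Omega_2^R(M)) \leq \beta_1(\lambda(R)-1)$, which after substitution gives $\beta_1 \leq (\beta_0-1)\lambda(R)$ --- the reverse of what you wrote, and useless for a lower bound on $\beta_1$. A separate caution: your suggestion to ``replace $M$ by a high syzygy'' to make it MCM does not preserve the hypothesis $e_R(M)=e(R)$, since additivity gives $e_R(\Omega_1^R(M))=(\beta_0-1)e(R)$. For the only application in the paper (namely $M=\omega_R$ in Theorem~\ref{testGor}) the module is already MCM, so the straightforward reduction modulo a minimal reduction of $\m$ suffices.
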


We obtain the following criterion for the Gorenstein property. Similar results have appeared in the work of several authors (see \cite{HH}, \cite{JL08}, \cite{Ulrich}). We recall that the notation  $\uind_R(M)$ was introduced in  Definition \ref{uind}.

\begin{thm}\label{testGor}
Let $R$ be a CM local ring of dimension $d$ and  with canonical module $\w_R$. Let  $M$ be a CM $R$-module with $\uind_R(M)<2$. Assume $\Ext_R^{i}(M,R)=0$ for every $d-\dim M+1\ls i\ls d+1 $.  Then $R$ is Gorenstein.  
\end{thm}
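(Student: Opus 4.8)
The plan is to reduce to the case where $M$ is MCM and then play the vanishing of $\Ext^i_R(M,R)$ against Lemma \ref{superThm} to force $R = \omega_R$ up to the relevant numerical data. First I would make the standard reductions: replacing $R$ by a faithfully flat extension we may assume $k$ is infinite, and since the Gorenstein property, the hypothesis $\uind_R(M)<2$, and the vanishing of $\Ext$ in the stated range all behave well under modding out by a maximal $R$-regular sequence that is also $M$-regular (using the depth lemma and the fact that $\uind$ is unchanged by such a reduction since both $e_R$ and $\mu$ are preserved), I would pass to the Artinian situation, or at least to the situation $\dim M = 0$. After this reduction the hypothesis reads: $M$ is a nonzero module with $e_R(M) < 2\mu(M)$ and $\Ext^i_R(M,R) = 0$ for $1 \ls i \ls d+1$, and I want to conclude $R$ is Gorenstein. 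Actually it is cleaner to keep $M$ MCM: apply a maximal $R$-sequence $\ux$ inside the annihilator-free part so that $M/\ux M$ is MCM over $R/\ux R$ of dimension $0$; note $\uind$ is preserved and the $\Ext$-vanishing range $1\ls i \ls d+1$ over $R$ transfers to $1 \ls i \ls 1$, i.e.\ $\Ext^1_{R/\ux R}(M/\ux M, R/\ux R) = 0$, together with information about higher syzygies. So I would instead set things up so that after reduction we have an Artinian ring $R$ and an $R$-module $M$ with $e_R(M) = \lambda(M) < 2\mu(M)$ and $\Ext^i_R(M,R) = 0$ for $i = 1$ (and we should track what happens for $i$ up to $2$ if needed).

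The key mechanism is Lemma \ref{superThm}(2), which I would apply with the roles chosen so that the canonical dual enters. Since $R$ has a canonical module, write $N$ for the MCM module with $N^\vee$ related to $R$; more precisely, take $N = \omega_R$ so that $N^\vee = R$. Then hypothesis (2) of Lemma \ref{superThm} with this $N$ says: if $\Ext^i_R(M, \omega_R^\vee) = \Ext^i_R(M,R) = 0$ for $n + d - \dim M \ls i \ls n+d$, then $\beta_n^R(\omega_R) \ls (\uind_R(M) - 1)\beta_{n-1}^R(\omega_R)$. Taking $n$ in the range where the $\Ext$-vanishing hypothesis $d - \dim M + 1 \ls i \ls d+1$ is available — that is, $n = 1$ — we get $\beta_1^R(\omega_R) \ls (\uind_R(M) - 1)\beta_0^R(\omega_R)$. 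Since $\uind_R(M) < 2$, this gives $\beta_1^R(\omega_R) < \beta_0^R(\omega_R) = \mu(\omega_R) = r(R)$, the type of $R$. Now I would invoke Lemma \ref{firstBetti} (applied to $\omega_R$, noting $e_R(\omega_R) = e(R)$ always holds for the canonical module of a CM ring): either $\omega_R$ is free, or $\beta_1^R(\omega_R) \gs \beta_0^R(\omega_R)$. The latter contradicts the inequality just obtained, so $\omega_R$ is free, i.e.\ $\omega_R \cong R$, which means $R$ is Gorenstein.

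The step I expect to be the main obstacle is verifying that Lemma \ref{superThm}(2) genuinely applies with $N = \omega_R$: that lemma requires $N$ to be MCM (which $\omega_R$ is), requires $R$ to have a canonical module (given), and requires the $\Ext$-vanishing $\Ext^i_R(M, N^\vee) = 0$ in the window $n + d - \dim M \ls i \ls n+d$ — with $n=1$ this is exactly $d - \dim M + 1 \ls i \ls d+1$, matching our hypothesis precisely, which is a good sign that this is the intended route. I should double-check the edge case $\dim M = d$ (so $M$ is MCM and the range is $1 \ls i \ls d+1$) and the possibility $M$ free, which is excluded since then $R$ would be... actually if $M$ is free the $\Ext$-vanishing is automatic and $\uind_R(M) = \uind_R(R) = 1 < 2$, but then we cannot conclude $R$ Gorenstein — so I must ensure $M$ is implicitly nonfree, which should be guaranteed because a free $M$ makes the hypothesis vacuous; re-examining, the statement as given does allow $M$ free, so perhaps the convention is that a "CM $R$-module $M$ with $\uind_R(M) < 2$" used as a test module is understood to be nonfree, or the case $M$ free forces $R$ regular hence Gorenstein via a separate trivial argument — I would address this by noting that if $M$ is free then taking $M = R$ we need another argument, but in fact the intended reading is $M$ nonfree and I would state the proof accordingly, possibly remarking that the free case is trivial or vacuous. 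The rest — the reductions to the Artinian case and the bookkeeping of the $\Ext$ range under modding out by a regular sequence — is routine via the depth lemma and change-of-rings for $\Ext$.
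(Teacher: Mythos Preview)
Your core argument—apply Lemma \ref{superThm}(2) with $N=\omega_R$ and $n=1$ to obtain $\beta_1^R(\omega_R)\ls(\uind_R(M)-1)\beta_0^R(\omega_R)<\beta_0^R(\omega_R)$, then invoke Lemma \ref{firstBetti} to force $\omega_R$ free—is exactly the paper's one-line proof. The reductions in your first paragraph are unnecessary (Lemma \ref{superThm}(2) applies directly to the original $R$ and $M$), and your worry about $M$ free is a non-issue: the same two-step argument still goes through, or alternatively observe that $M\cong R^n$ with $\uind_R(M)<2$ gives $e(R)=\uind_R(R^n)<2$, so $e(R)=1$ and $R$ is regular.
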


\begin{proof}
The result follows from Lemma \ref{firstBetti} by applying Lemma \ref{superThm} (2) with $N=\w_R$.
\end{proof}

In \cite{JL07}, Jorgensen and Leuschke ask the following question.

\begin{question}[{\cite[2.6]{JL07}}]\label{jorleu}
If $R$ is a CM  local ring with canonical module $\w_R$, does $\beta_1(\w_R) \ls \beta_0(\w_R)$ imply that $R$ is Gorenstein?
\end{question}

They remark that a positive answer to this question would provide improvements to their results \cite[2.2]{JL07} and \cite[2.4]{JL07}.  We give a positive answer to Question \ref{jorleu}  in a particular case, which is  sufficient to produce the desired improvement of \cite[2.4]{JL07}, as well as  \cite[3.4]{HH}  .

We first recall the following result of Asashiba and Hoshino. In the following statement, we denote by $M^*$ the {\it $R$-dual} $\Hom_R(M,R)$.

\begin{lemma}[{\cite[2.1]{AH94}}]\label{twogen}
Let $M$ and $N$ be $R$-modules. Assume $M$ is faithful and that  we have an exact sequence $0 \rightarrow N \xrightarrow{\varphi} R^2 \xrightarrow{\psi} M \rightarrow 0$.  Then there exists  maps $\alpha$, $\beta$, and an isomorphism $\theta$ that make the following diagram commute.
\[\begin{tikzcd}
0 \arrow[r] & N \arrow[r, "\varphi"] \arrow[d, "\alpha"'] & R^2 \arrow[r, "\psi"] \arrow[d, "\theta"'] & M \arrow[r] \arrow[d, "\beta"'] & 0 \\
0 \arrow[r] & M^* \arrow[r, "\psi^*"] & (R^2)^* \arrow[r, "\varphi^*"] & N^* & 
\end{tikzcd}\]
\end{lemma}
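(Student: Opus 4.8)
The plan is to exploit the self-duality of the rank-two free module $R^2$ furnished by an alternating form. Fix a basis $e_1,e_2$ of $R^2$ with dual basis $e_1^*,e_2^*$ of $(R^2)^*$, and define $\theta\colon R^2\to(R^2)^*$ by $\theta(e_1)=e_2^*$ and $\theta(e_2)=-e_1^*$; equivalently, $\theta$ is the map attached to the alternating bilinear form $\langle x,y\rangle=x_1y_2-x_2y_1$ for $x=x_1e_1+x_2e_2$, $y=y_1e_1+y_2e_2$. Since its matrix has determinant $1$, $\theta$ is an isomorphism. With $\theta$ fixed, the maps $\alpha$ and $\beta$ will be forced. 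Applying $\Hom_R(-,R)$ to the given exact sequence yields the exact sequence $0\to M^*\xrightarrow{\psi^*}(R^2)^*\xrightarrow{\varphi^*}N^*$; so to produce $\alpha$ it suffices to show $\theta\circ\varphi$ has image in $\ker\varphi^*=\im\psi^*$, and then set $\alpha$ to be the unique map with $\psi^*\alpha=\theta\varphi$ (using that $\psi^*$ is injective). To produce $\beta$ it suffices to show $\varphi^*\circ\theta$ kills $\im\varphi=\ker\psi$, and then set $\beta$ to be the unique map with $\beta\psi=\varphi^*\theta$ (using that $\psi$ is surjective). Both requirements reduce to the single identity $\varphi^*\circ\theta\circ\varphi=0$, and both squares commute by construction once $\alpha,\beta$ are so defined.

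The heart of the matter is therefore the claim that $\im\varphi$ is totally isotropic, i.e.\ $\langle\varphi(n),\varphi(n')\rangle=0$ for all $n,n'\in N$ — indeed $\varphi^*(\theta(\varphi(n)))$ is exactly the functional $n'\mapsto\langle\varphi(n),\varphi(n')\rangle$. This is where faithfulness of $M$ is used. Write $a=\varphi(n)=a_1e_1+a_2e_2$ and $b=\varphi(n')=b_1e_1+b_2e_2$. The elementary identities $b_2a-a_2b=(a_1b_2-a_2b_1)e_1$ and $a_1b-b_1a=(a_1b_2-a_2b_1)e_2$, together with the fact that $\im\varphi$ is an $R$-submodule of $R^2$, give $\langle a,b\rangle\, R^2\subseteq\im\varphi$. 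Hence $\langle a,b\rangle$ annihilates $M=R^2/\im\varphi=\coker\varphi$, and since $M$ is faithful we conclude $\langle a,b\rangle=0$, as desired.

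I expect the only genuine obstacle to be pinning down the correct normalization of $\theta$ (so that the resulting $\alpha,\beta$ exist) and spotting the two determinant identities above; everything else is formal, using only left exactness of $\Hom_R(-,R)$ and the universal properties of kernels and cokernels. Once the isotropy claim is established, the construction of $\alpha$ and $\beta$ and the commutativity of the diagram are immediate, and $\theta$ is an isomorphism by construction, completing the proof.
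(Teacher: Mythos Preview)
Your proof is correct. Note, however, that the paper does not give its own proof of this lemma: it is simply cited from Asashiba--Hoshino \cite[2.1]{AH94}, so there is no ``paper's own proof'' to compare against. Your argument---using the alternating form on $R^2$ to define the isomorphism $\theta$ and then exploiting the determinant identities to show that $\im\varphi$ is totally isotropic (invoking faithfulness of $M$)---is exactly the standard approach, and is essentially the argument found in the Asashiba--Hoshino reference itself. The key observation that $\langle a,b\rangle R^2\subseteq Ra+Rb\subseteq\im\varphi$ is precisely what makes the rank-two case special and is the right insight.
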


We say that an $R$-module $M$ has {\it constant rank} if there is an $r \in \mathbb{N}$ such that $M_{\p} \cong R^r_{\p}$ for all $\p \in \Ass(R)$.  In this case, we refer to $r$ as the \textit{rank} of $M$ and denote it by $\rank M$.

We derive our result on Question \ref{jorleu} from the following more general result.

\begin{prop}\label{totref}
Let $R$ be a  CM local ring. Assume there exists a non-free  faithful MCM  $R$-module  $M$,   such that it has constant rank and $\max\{\beta_0^R(M),\beta_1^R(M)\} \ls 2$.  Then 
\begin{enumerate}
\item[$(1)$] 
$M$ is periodic of period two and is reflexive, i.e., the natural map $M \to M^{**}$ is an isomorphism.
\item[$(2)$] If $R$ is not Gorenstein, then it does not satisfy trivial vanishing.

\end{enumerate}    
\end{prop}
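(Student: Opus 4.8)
The setup gives a non-free faithful MCM module $M$ of constant rank with $\beta_0^R(M), \beta_1^R(M) \le 2$. Since $M$ is not free, $\beta_0^R(M) \ge 1$; if $\beta_0^R(M) = 1$ then $M$ is cyclic and faithful, forcing $M \cong R$, contradicting non-freeness. So $\beta_0^R(M) = 2$, and similarly $\beta_1^R(M) \le 2$. The first syzygy $N := \Omega_1^R(M)$ therefore fits in an exact sequence $0 \to N \to R^2 \xrightarrow{\psi} M \to 0$ with $\mu(N) = \beta_1^R(M) \le 2$. Moreover, $N$ is MCM (as a syzygy of an MCM module over a CM ring), and $N$ has constant rank $2 - \rank M$ (which must be nonnegative; one checks $\rank M = 1$ is forced, so $\rank N = 1$ as well — I would verify this by localizing at associated primes where $M_\mathfrak{p} \cong R^r_\mathfrak{p}$ and the sequence splits). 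First I would invoke Lemma~\ref{twogen} (Asashiba--Hoshino) with this exact sequence: since $M$ is faithful with a two-generated presentation, we get a commutative diagram with an isomorphism $\theta\colon R^2 \to (R^2)^*$ and a map $\alpha\colon N \to M^*$ fitting into $0 \to M^* \xrightarrow{\psi^*} (R^2)^* \xrightarrow{\varphi^*} N^*$. The diagram shows $\alpha$ identifies $N$ with $\ker(\varphi^*) = \im(\psi^*) \cong M^*$ — wait, more carefully: $\alpha$ has image contained in $M^* = \im(\psi^*)$ via the identification $\theta$, and since both $N$ and $M^*$ sit as the kernel of the same map (the composite across the square), $\alpha$ is an isomorphism $N \cong M^*$. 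Dualizing the original sequence and using $\mathrm{Ext}^1_R(M,R)$, which by the diagram is identified with $\coker(\varphi^*)$, I would extract that the sequence $0 \to M^* \to R^2 \to N^* \to 0$ is exact, i.e., $\beta_1^R(M^*) \le 2$ as well, and $N^* \cong \Omega_1^R$-type data. The upshot of this step: $\Omega_1^R(M) \cong M^*$ and $\Omega_1^R(M^*) \cong M$ (up to free summands, which vanish by faithfulness/non-freeness), giving $\Omega_2^R(M) \cong M$ — hence $M$ is periodic of period two. Reflexivity $M \cong M^{**}$ then follows because $M^{**} \cong (\Omega_1^R M)^* \cong \Omega_1^R(M^*) \cong M$ (or directly from the diagram, which produces the natural map as part of $\theta$'s compatibility). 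This completes part (1).

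For part (2), suppose $R$ is not Gorenstein; I want to show $R$ does not satisfy trivial vanishing. The natural candidate is to test $\mathrm{Tor}$ (or $\mathrm{Ext}$) between $M$ and its canonical-type dual or between $M$ and $M^*$. Since $M$ has period two, $M$ has infinite projective dimension (a module of finite projective dimension over a CM ring has a finite resolution, incompatible with $\Omega_2 M \cong M \ne $ free). The plan is to exhibit modules $A, B$ of infinite projective dimension with $\mathrm{Tor}_i^R(A,B) = 0$ for $i \gg 0$, contradicting trivial $\mathrm{Tor}$-vanishing. I would take $A = M$ and examine $\mathrm{Tor}_i^R(M, M^*)$ or, more promisingly, relate things to $\mathrm{Ext}$: the periodicity means $\mathrm{Ext}^i_R(M, -)$ is periodic in $i$ with period two, so $\mathrm{Ext}^i_R(M, R) = 0$ for all $i \gg 0$ iff it vanishes for two consecutive values. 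From the Asashiba--Hoshino diagram, $\mathrm{Ext}^1_R(M,R) = \coker(\varphi^*)$; if this were zero the sequence $0 \to M^* \to R^2 \to N^* \to 0$ would be split or short enough to force $M^*$ (hence $M$) to have low projective dimension — one should be able to argue that $\mathrm{Ext}^{\ge 1}_R(M,R) = 0$ together with $M$ totally reflexive and faithful of the given Betti bounds forces $R$ Gorenstein (this is essentially the mechanism of Question~\ref{jorleu}: $\beta_1(\omega_R) \le \beta_0(\omega_R)$ type statements). So when $R$ is \emph{not} Gorenstein, $\mathrm{Ext}^i_R(M,R)$ does \emph{not} vanish for all large $i$; instead I would look for the vanishing on the other side. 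The cleanest route: since $M$ is totally reflexive (part (1) shows $M$ is reflexive with $M^* $ also satisfying the hypotheses, so $M$ is in fact totally reflexive), $\mathrm{Tor}_i^R(M, N) \cong \mathrm{Ext}^i_R(M^*, N)$-type relations hold, and one can produce $\mathrm{Tor}_i^R(M, M^*) = 0$ for $i \gg 0$ using the periodic resolution $\cdots \to R^2 \to R^2 \to R^2 \to M \to 0$ tensored with $M^*$: the homology is computed by a periodic complex, and I'd show it becomes acyclic by a rank/length count — the maps $R^2 \otimes M^* \to R^2 \otimes M^*$ in the periodic tail, being injective after localizing at associated primes (constant rank) and the complex being "self-dual," forces the tail to be acyclic. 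That gives $\mathrm{Tor}_i^R(M, M^*) = 0$ for $i \gg 0$ with both $M, M^*$ of infinite projective dimension, contradicting trivial $\mathrm{Tor}$-vanishing.

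\textbf{Main obstacle.} The delicate point is part (2): showing that non-Gorensteinness actually \emph{produces} a nontrivial vanishing of $\mathrm{Tor}$ (or $\mathrm{Ext}$) between two modules of infinite projective dimension. Periodicity of $M$ makes $\mathrm{Ext}^\bullet_R(M, M)$ and $\mathrm{Tor}^R_\bullet(M, M)$ genuinely periodic, so asymptotic vanishing is equivalent to vanishing in a single degree range, and I must pin down exactly which pairing vanishes. I expect the right statement is: either $\mathrm{Ext}^{\ge 1}_R(M, R) = 0$ — in which case the Question~\ref{jorleu}-type argument forces $R$ Gorenstein — or else $\mathrm{Ext}^{\ge 1}_R(M,R) \ne 0$, but then by period-two self-duality of the presentation, $\mathrm{Tor}_i^R(M, M^*) = 0$ for $i \gg 0$ (the obstruction to this vanishing being precisely the "other half" of the $\mathrm{Ext}$ computation, which now vanishes). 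Making this dichotomy precise and checking that in the non-Gorenstein branch the relevant $\mathrm{Tor}$ really does vanish — rather than merely being periodic and nonzero — is where the real work lies; I would lean on Lemma~\ref{twogen}'s diagram, which controls all the relevant dualizations simultaneously, together with a localization-at-$\mathrm{Ass}(R)$ argument to see the periodic complex is generically split exact and hence (being a complex of free modules with generically split differentials over a CM ring, self-dual) exact in high homological degree.
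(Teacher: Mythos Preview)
Your outline for part (1) has the right shape but the crucial step is not justified. You assert that $\alpha\colon N \to M^*$ is an isomorphism because ``both $N$ and $M^*$ sit as the kernel of the same map,'' but they do not: $N=\ker\psi$ lives in the top row and $M^*=\ker\varphi^*$ in the bottom row, and the diagram only gives a comparison map $\alpha$ between them. The paper's argument is different and uses more: from the commutative square one gets $\alpha$ injective, and the snake lemma gives $\ker\beta\cong\coker\alpha$. Since $\ker\beta$ embeds in the MCM module $M$, it is either zero or has dimension $d$; but a rank count shows $\rank(\coker\alpha)=\rank(M^*)-\rank(\Omega_1^R M)=0$, forcing $\coker\alpha=0$. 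A second rank count then makes $\beta$ an isomorphism too. You would need this (or an equivalent) argument; the ``same kernel'' claim does not do it.

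For part (2) you miss the point entirely, and the dichotomy you set up is backwards. The work in part (1) already proves $\Ext^1_R(M,R)=0$ and $\Ext^1_R(\Omega_1^R M,R)=0$ (these are exactly what make the dual sequences short exact), and periodicity then gives $\Ext^i_R(M,R)=0$ for all $i>0$. That is the whole argument: $M$ has infinite projective dimension, and if $R$ is not Gorenstein then $R$ has infinite injective dimension, so trivial Ext-vanishing fails. You even note that $M$ is totally reflexive, which by definition includes $\Ext^{>0}_R(M,R)=0$, yet you do not use it; instead you claim that $\Ext^{\ge 1}_R(M,R)=0$ would force $R$ Gorenstein and then chase a hypothetical $\Tor$ vanishing between $M$ and $M^*$. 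The first claim is false in general (and is precisely what makes (2) interesting), and the $\Tor$ route is unnecessary.
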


\begin{proof}
We begin with (1). Since $M$ is not free,   $M$ has infinite projective dimension. Therefore, we must have $\beta_0^R(M)=\beta_1^R(M)=2$ and  $\rank(M)=\rank(\Omega_1^R(M))=\rank(\Omega_2^R(M))=1$ by additivity of ranks. By Lemma \ref{twogen}, we have the following commutative diagram, where $\theta$ is an isomorphism.
\[\begin{tikzcd}
0 \arrow[r] & \Omega^R_1(M) \arrow[r, "\varphi"] \arrow[d, "\alpha"'] & R^2 \arrow[r, "\psi"] \arrow[d, "\theta"'] & M \arrow[r] \arrow[d, "\beta"'] & 0 \\
0 \arrow[r] & M^* \arrow[r, "\psi^*"] & (R^2)^* \arrow[r, "\varphi^*"] & (\Omega_1^R(M))^* & 
\end{tikzcd}\]
By commutivity of the diagram, we have that $\alpha$ is injective and, by the snake lemma, we have that $\ker \beta \cong \coker \alpha$.  Since $M$ is MCM and  $\ker \beta$ embeds into $M$, we have that either $\dim (\coker \alpha)=d$, or $\coker \alpha=0$.  
On the other hand, $$\rank(\coker \alpha)=\rank(M^*)-\rank(\Omega_1^R(M))=0.$$ Therefore, $\coker \alpha=0$  and then  $\alpha$ is an isomorphism.  Hence, $\beta$ is an isomorphism as well since 
$$\rank(\coker \beta)=\rank(M)-\rank((\Omega_1^R(M))^*)=0.$$ 
 We conclude that $M\cong M^{**}$, $\Ext^1_R(M^*,R)=0$, and that we have the following exact sequence $0 \to M^* \to R^2 \to M \to 0$.

Since $ (\Omega_1^R(M))^*\cong M$, we have that $(\Omega^1_R(M))^*$ is faithful, and then so is $\Omega_1^R(M)$.  Then we may apply Lemma \ref{twogen} as above to  obtain isomorphisms  $$\Omega_1^R(M)\cong (\Omega_2^R(M))^*\cong (\Omega_1^R(M))^{**}.$$  
We also conclude $\Ext^1_R(M,R)=\Ext^1_R((\Omega_1^R(M))^{*},R)=0$ and that   we have an  exact sequence $0 \to M \to R^2 \to M^* \to 0$. Splicing this sequence with the previous sequence $0 \to M^* \to R^2 \to M \to 0$ repeatedly, we  construct a resolution of $M$. Hence, conclusion of  (1) follows.   

We observe that this argument shows $\Ext_R^i(M,R)=0$ for every $i>0$ and hence  (2) follows. This finishes the proof.
\end{proof}

\begin{remark}
The resolution constructed in the proof of Proposition \ref{totref} is an example of a {\it totally acyclic resolution of $M$}.  That is, a complex $F_\bullet:\cdots \to F_{2}\to F_{1} \to F_0 \to F_{-1} \to F_{-2} \to \cdots$ such that $M \cong \coker(F_1 \to F_0)$ and such that $F$ and $F^*$ are exact. Modules admitting such a resolution are called {\it totally reflexive} \cite[2.4]{AM}.
\end{remark}

With Proposition \ref{totref} in hand, we provide our result on Question \ref{jorleu}. The ring $R$ is {\it generically Gorenstein} if $R_\fp$ is Gorenstein for every $\fp\in \Ass(R)$, or equivalently, if $\omega_R$ has constant rank.

 We recall that  $r(R)$ denotes the type of $R$. 

\begin{cor}\label{nogorc1}
Let  $R$ be a  generically Gorenstein CM local ring such that $r(R)=2$. Then $\beta^R_1(\w_R)>2$. 
\end{cor}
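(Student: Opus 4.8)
The plan is to derive this from Proposition \ref{totref} applied to the canonical dual of a suitable module. Since $r(R)=2$ means $\mu(\omega_R)=2$, the naive attempt would be to feed $M=\omega_R$ into Proposition \ref{totref}; this requires $\beta_1^R(\omega_R)\ls 2$, which is exactly the hypothesis we want to contradict, so I will argue by contradiction. Suppose $\beta_1^R(\omega_R)\ls 2$. Since $R$ is generically Gorenstein, $\omega_R$ has constant rank $1$, and $\omega_R$ is a faithful MCM module (it always is). It is non-free since $R$ is not Gorenstein (indeed $r(R)=2\neq 1$). Thus $\max\{\beta_0^R(\omega_R),\beta_1^R(\omega_R)\}\ls 2$, and Proposition \ref{totref}(1) applies: $\omega_R$ is periodic of period two and reflexive, and the proof of \ref{totref} shows $\Ext_R^i(\omega_R,R)=0$ for all $i>0$.

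The next step is to use this vanishing to force $R$ Gorenstein, giving the contradiction. From $\Ext_R^i(\omega_R,R)=0$ for all $i>0$ together with the fact that $\omega_R$ has finite injective dimension (it is the canonical module), one gets that $R$ itself has finite injective dimension as an $R$-module: indeed, a standard argument (e.g.\ via the minimal injective resolution of $\omega_R$, or using that $\Ext^i_R(\omega_R, R)=0$ for $i>0$ combined with $\Ext^i_R(\omega_R,\omega_R)=0$ for $i>0$) shows $R\otimes^{\mathbf L}\omega_R^{\dagger}$ behaves well; more directly, since $\omega_R$ is a MCM module of finite injective dimension and $\Ext^{>0}_R(\omega_R,R)=0$, the module $R$ is in the thick subcategory generated by $\omega_R$ in a way that transports finite injective dimension, hence $R$ has finite injective dimension, i.e.\ $R$ is Gorenstein. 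Alternatively, and most cleanly: apply Theorem \ref{testGor}. We have $\uind_R(\omega_R)=e_R(\omega_R)/\mu(\omega_R)$; since $e_R(\omega_R)=e(R)$ (multiplicity is preserved under canonical dual for MCM modules of rank one... here one should instead note $e_R(\omega_R)\ls \mu(\omega_R)\cdot \uind$ — let me use the direct route). Actually Theorem \ref{testGor} needs $\uind_R(M)<2$, which need not hold for $\omega_R$; so I will avoid it and instead argue directly from $\Ext^{>0}_R(\omega_R,R)=0$.

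So the cleanest route for the final step: from the self-vanishing $\Ext^i_R(\omega_R,R)=0$ for all $i>0$, and since $\omega_R\otimes_R(-)$ and $\Hom_R(\omega_R,-)$ are mutually inverse equivalences on MCM modules (this is the standard canonical-module equivalence, using that $\omega_R$ is faithfully... rather: $\Hom_R(\omega_R,\omega_R)=R$ and $\Ext^{>0}_R(\omega_R,\omega_R)=0$), one deduces that $\Hom_R(\omega_R,R)$ is MCM and that applying $\omega_R\otimes_R -$ recovers $R$; combined with $\Ext^{>0}_R(\omega_R,R)=0$ this forces $\omega_R\cong R^{\oplus?}$ locally, but by rank considerations $\omega_R^*\cong R$, hence $\omega_R\cong R$, contradicting $r(R)=2$.

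The main obstacle I anticipate is making the implication ``$\Ext^i_R(\omega_R,R)=0$ for all $i>0$ $\Rightarrow$ $R$ Gorenstein'' airtight with the tools at hand: one wants to invoke either finite injective dimension descent or the canonical-module equivalence, and the precise form used in the paper is likely the observation that $\omega_R$ being a test module (e.g.\ Theorem \ref{testGor}, or the results of Ulrich/Hanes--Huneke cited) does the job — but as noted $\uind$ may be too big. The safest self-contained argument is: since $\omega_R$ has finite injective dimension and $\Ext^i_R(\omega_R,R)=0$ for $i>0$, the ring $R$ has finite injective dimension over itself (this is a known consequence; one can cite that a ring with a nonzero module of finite injective dimension in its ``Ext-orthogonal'' of finite-injective-dimension type is Gorenstein, or argue via local duality that $\Ext^i_R(\omega_R,R)=0$ for $i>0$ together with $\omega_R$ MCM forces $R$ to be CM of type one). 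I will write the proof invoking Proposition \ref{totref} to get the self-vanishing and periodicity, then conclude $\omega_R^*\cong R$ from the rank-one reflexivity, and finally $\omega_R\cong \omega_R^{**}\cong R^*\cong R$, contradicting $r(R)=2$; this keeps everything within the machinery already developed.

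\begin{proof}
Suppose, for contradiction, that $\beta_1^R(\w_R)\ls 2$. Since $R$ is generically Gorenstein, $\w_R$ has constant rank, equal to $1$. As $R$ is not Gorenstein (because $r(R)=2$), the module $\w_R$ is not free, and it is faithful and MCM. Since $\mu(\w_R)=r(R)=2$, we have $\max\{\beta_0^R(\w_R),\beta_1^R(\w_R)\}\ls 2$, so Proposition \ref{totref} applies to $M=\w_R$. From part (1) and its proof, $\w_R$ is reflexive and $\Ext_R^i(\w_R,R)=0$ for every $i>0$; moreover, as in that proof, there is an exact sequence $0\to \w_R^{*}\to R^2\to \w_R\to 0$, and $\w_R^{*}$ has constant rank with $\rank(\w_R^{*})=\rank(R^2)-\rank(\w_R)=1$.

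Since $\Ext_R^1(\w_R,R)=0$, dualizing the sequence $0\to \w_R^{*}\to R^2\to \w_R\to 0$ into $R$ gives the exact sequence $0\to \w_R^{*}\to R^2\to \w_R^{**}\to 0$. By reflexivity $\w_R^{**}\cong \w_R$, so this recovers the same sequence, and in particular $\Ext_R^1(\w_R^{*},R)=0$; repeating, $\Ext_R^i(\w_R^{*},R)=0$ for all $i>0$, and $\w_R^{*}$ is a rank-one reflexive MCM module admitting the presentation $0\to \w_R\to R^2\to \w_R^{*}\to 0$ as well (the splice constructed in the proof of Proposition \ref{totref}).

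Now apply $\Hom_R(-,\w_R)$ to $0\to \w_R^{*}\to R^2\to \w_R\to 0$. Since $\w_R^{*}$ and $\w_R$ are MCM and $\w_R$ has finite injective dimension, $\Ext_R^i(N,\w_R)=0$ for all $i>0$ and all MCM $N$; hence we obtain the exact sequence $0\to \w_R^{\vee}\to \w_R^2\to (\w_R^{*})^{\vee}\to 0$, where $(-)^{\vee}=\Hom_R(-,\w_R)$. But $\w_R^{\vee}\cong R$ and, by Hom-tensor adjunction together with reflexivity of $\w_R$, $(\w_R^{*})^{\vee}=\Hom_R(\Hom_R(\w_R,R),\w_R)\cong \Hom_R(\Hom_R(\w_R,\w_R),R)$; using $\Hom_R(\w_R,\w_R)\cong R$ this is $\cong R$. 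Thus we get an exact sequence $0\to R\to \w_R^2\to R\to 0$, which shows $\w_R^2\cong R^2$, forcing $\w_R$ to be free. This contradicts the assumption that $R$ is not Gorenstein, and completes the proof.
\end{proof}
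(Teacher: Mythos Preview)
Your overall strategy matches the paper's: assume $\beta_1^R(\omega_R)\le 2$, invoke Proposition \ref{totref} to obtain the two spliced short exact sequences, and then use canonical duality to force $\omega_R$ free. The setup is correct and you correctly extract both sequences $0\to\omega_R^*\to R^2\to\omega_R\to 0$ and $0\to\omega_R\to R^2\to\omega_R^*\to 0$ from the proof of Proposition \ref{totref}.

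The gap is in your identification $(\omega_R^*)^\vee\cong R$. You write
\[
\Hom_R(\Hom_R(\omega_R,R),\omega_R)\;\cong\;\Hom_R(\Hom_R(\omega_R,\omega_R),R)
\]
and attribute this to ``Hom--tensor adjunction together with reflexivity of $\omega_R$.'' No form of Hom--tensor adjunction lets you swap the inner target like this. Using reflexivity $\omega_R\cong(\omega_R^*)^*$ and adjunction one gets only
\[
\Hom_R(\omega_R^*,\omega_R)\cong\Hom_R(\omega_R^*,\Hom_R(\omega_R^*,R))\cong\Hom_R(\omega_R^*\otimes_R\omega_R^*,R),
\]
and there is no reason for $\omega_R^*\otimes_R\omega_R^*$ to be isomorphic to $\Hom_R(\omega_R,\omega_R)\cong R$ unless $\omega_R$ is already invertible. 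So the claimed isomorphism is unjustified, and without it your sequence $0\to R\to\omega_R^2\to R\to 0$ is not established.

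The fix is immediate and is exactly what the paper does: apply $(-)^\vee=\Hom_R(-,\omega_R)$ to the \emph{other} sequence $0\to\omega_R\to R^2\to\omega_R^*\to 0$ that you already derived. Since $\omega_R$ and $\omega_R^*$ are MCM, this yields
\[
0\to(\omega_R^*)^\vee\to\omega_R^2\to R\to 0,
\]
which splits because $R$ is free. Applying $(-)^\vee$ once more (using that $\omega_R^*$ is MCM so $((\omega_R^*)^\vee)^\vee\cong\omega_R^*$) gives $R^2\cong\omega_R\oplus\omega_R^*$, so $\omega_R$ is a summand of a free module and hence free, contradicting $r(R)=2$. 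You had all the ingredients; you simply dualized the wrong sequence and then tried to patch it with an invalid isomorphism.
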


\begin{proof}
We proceed by contradiction. Assume $\beta^R_1(\w_R) \ls 2$, then Proposition \ref{totref} (1) implies that $(\w_R)^*$ is MCM and  we have an  exact sequence $0 \to \w_R \to R^2 \to \w_R^* \to 0$.  By canonical duality, we have the sequence  $0 \to (\w_R^*)^{\vee} \to \w_R^2 \to R \to 0$ which splits. Then, by applying canonical duality again, we conclude  $\w_R$ free. The latter implies $R$ is Gorenstein, which contradicts the assumption  $r(R)=2$.
\end{proof}

The following is the main result of this section. This theorem provides  direct improvements to \cite[2.4]{JL07} and \cite[3.4]{HH}. 

\begin{thm}\label{stronghh}
Let $R$ be a generically Gorenstein CM  local ring of dimension $d$ and with canonical module $\w_R$.  Assume there exists an MCM $R$-module $M$  with  $\uind_R(M) \ls 2$ and such that $\Ext^i_R(M,R)=0$ for $1 \ls i \ls d+1$. Then $R$ is Gorenstein.
\end{thm}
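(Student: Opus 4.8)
\textbf{Proof plan for Theorem \ref{stronghh}.}

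The plan is to separate the argument according to the value of $\uind_R(M)$, treating the ``strict'' case $\uind_R(M)<2$ and the ``boundary'' case $\uind_R(M)=2$ differently. When $\uind_R(M)<2$, the conclusion is immediate from Theorem \ref{testGor}: the hypothesis $\Ext^i_R(M,R)=0$ for $1\ls i\ls d+1$ is precisely the vanishing range $d-\dim M+1\ls i\ls d+1$ required there, since $M$ is MCM so $\dim M=d$. Thus the entire content of the theorem is in the equality case $\uind_R(M)=2$, i.e.\ $e_R(M)=2\mu(M)$, which is exactly where \cite[3.4]{HH} and \cite[2.4]{JL07} fall short and where our improved input is needed.

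So assume $\uind_R(M)=2$. The first step is the usual reduction: by Proposition \ref{pass}-style arguments (completion, then killing a maximal regular sequence $\underline{x}\in\m\setminus\m^2$) I would reduce to the case $R$ Artinian, noting that the Ulrich index, the MCM property of $M$, the generically Gorenstein hypothesis, and the vanishing $\Ext^i_R(M,R)=0$ in the relevant range are all preserved under such a reduction (after reindexing, $\Ext^1_{R/\underline{x}R}(M/\underline{x}M,R/\underline{x}R)=0$ from the range $1\ls i\ls d+1$). Over an Artinian ring, $e_R(M)=\lambda(M)$, so $\uind_R(M)=2$ says $\lambda(M)=2\mu(M)$. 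The goal is now to produce a non-free faithful MCM module with $\max\{\beta_0,\beta_1\}\ls 2$ so that Proposition \ref{totref} applies, or else to derive a contradiction directly.

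The key step is to run the argument of Lemma \ref{superThm}(2)/Theorem \ref{testGor} with $N=\omega_R$ but tracking the $\uind=2$ case carefully. From Lemma \ref{superThm}(2) with $N=\omega_R$ and $n=1$ (so $n+d-\dim M=1$ and $n+d=d+1$, matching our vanishing hypothesis), we get $\beta_1^R(\omega_R^{\vee})\ls(\uind_R(M)-1)\beta_0^R(\omega_R^{\vee})=\beta_0^R(\omega_R^{\vee})$; but $\omega_R^{\vee}\cong R$, so this is vacuous — instead one applies this to syzygies. The right move is: the vanishing $\Ext^i_R(M,R)=0$ for $1\ls i\ls d+1$ together with $\uind_R(M)=2$ forces, via Lemma \ref{superThm}(2), that $\beta_{n}^R(\Omega$-type modules$)\ls\beta_{n-1}^R$ of the same, i.e.\ the relevant Betti sequence is non-increasing; combined with constant (positive) rank considerations coming from the generically Gorenstein hypothesis — which gives $\omega_R$ constant rank, hence $M$ or an associated module has controlled rank — one pins the Betti numbers down to be exactly $2$ in each spot and small. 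Concretely, I expect to show that either $M$ is free (done, $R$ Gorenstein) or one can extract a non-free faithful MCM module $L$ (built from $M$, its syzygies, and canonical duality) with $\beta_0^R(L)=\beta_1^R(L)=2$ and constant rank $1$. Then Proposition \ref{totref}(1) applies to $L$, yielding $\Ext^i_R(L,R)=0$ for all $i>0$ and a totally acyclic resolution; feeding this back through canonical duality — exactly as in the proof of Corollary \ref{nogorc1}, where $0\to\omega_R\to R^2\to\omega_R^*\to 0$ is shown to split — collapses everything and forces $\omega_R$ to be free, hence $R$ Gorenstein.

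The main obstacle I anticipate is the bookkeeping that upgrades the inequality $\uind_R(M)\ls 2$ into the rigid equalities $\beta_0=\beta_1=2$ and $\rank=1$ needed to invoke Proposition \ref{totref}: Lemma \ref{superThm}(2) only gives $\beta_n\ls(\uind-1)\beta_{n-1}$, so when $\uind<2$ Theorem \ref{testGor} applies cleanly, but at $\uind=2$ this degenerates and I must instead combine it with Lemma \ref{firstBetti} (which needs $e_R(\text{some module})=e(R)$, forcing me to first argue $M$ or a syzygy has maximal multiplicity) and with the constant-rank constraints to close the gap. Getting the module-theoretic gymnastics right — choosing the correct module among $M$, $\Omega^R_i(M)$, $M^*$, and their canonical duals to which Proposition \ref{totref} applies, and checking faithfulness and constant rank for it — is the delicate part; the generically Gorenstein hypothesis is exactly what makes the rank arguments work, which is why it cannot be dropped here even though Lemma \ref{firstBetti} itself does not need it.
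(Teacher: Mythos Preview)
Your plan has two genuine gaps.

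First, the Artinian reduction is circular. An Artinian local ring is generically Gorenstein if and only if it is Gorenstein (its only associated prime is $\m$), so ``generically Gorenstein is preserved under reduction modulo a maximal regular sequence'' amounts to asserting that $R/\underline{x}R$ is Gorenstein, which is equivalent to the conclusion you are trying to prove. The paper does \emph{not} reduce to the Artinian case here; it works directly over $R$ and uses the generically Gorenstein hypothesis only to guarantee $\rank\omega_R=1$.

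Second, you misread Lemma~\ref{superThm}(2): with $N=\omega_R$ (so that $N^{\vee}\cong R$ matches the given Ext vanishing), the conclusion is $\beta_1^R(\omega_R)\ls(\uind_R(M)-1)\beta_0^R(\omega_R)$, not an inequality on Betti numbers of $\omega_R^{\vee}\cong R$. So this is not vacuous---but even the correct inequality $\beta_1^R(\omega_R)\ls r(R)$ does not bound $r(R)$ itself, and your plan never explains how to obtain $r(R)\ls 2$, which is the crucial first step. The paper's route is more direct: from $\Ext^i_R(M,R)=0$ for $1\ls i\ls d+1$ one gets via Lemma~\ref{exp} that both $M\otimes_R\omega_R$ and $M\otimes_R\Omega_1^R(\omega_R)$ are MCM. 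Since $\omega_R$ and $\Omega_1^R(\omega_R)$ have constant rank~$1$ (this is where generically Gorenstein enters), one has $e_R(M\otimes_R\omega_R)=e_R(M)$ and similarly for $\Omega_1^R(\omega_R)$; the MCM inequality $e_R\gs\mu$ then yields $r(R)\ls\uind_R(M)\ls 2$ and, if $r(R)=2$, also $\beta_1^R(\omega_R)\ls 2$. The latter contradicts Corollary~\ref{nogorc1}. Your idea of ultimately invoking Proposition~\ref{totref}/Corollary~\ref{nogorc1} is correct in spirit, but the missing ingredient is this tensor-product multiplicity argument that pins down $r(R)$ and $\beta_1^R(\omega_R)$.
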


\begin{proof}
From Lemma \ref{exp} (1) it follows that $M \otimes_R \w_R$ is MCM. By Lemma \ref{exp} (2) we have an exact sequence 
\begin{equation}\label{sfr}
0\to M\otimes_R \Omega^1_R(\w_R)\to M\otimes_R R^{r(R)}\to   M\otimes_R \w_R \to 0
\end{equation} 
whence it follows $M\otimes_R \Omega^1_R(\w_R)$ is MCM as well. Therefore, 
\[e_R(M)=e_R(M \otimes_R \w_R) \gs \mu(M \otimes_R \w_R)=\mu(M)r(R).\]
This implies $r(R) \ls 2$. If $r(R)=1$, $R$ is Gorenstein.  Then we may assume $r(R)=2$ which implies $\Omega^1_R(\w)$ has rank $1$ by \eqref{sfr}. Therefore,
\[e_R(M)=e(M \otimes_R \Omega^1_R(\w_R)) \gs \mu(M \otimes_R \Omega^1_R(\w_R))=\mu(M)\mu(\Omega^1_R(\w_R)).\]
Hence $\mu(\Omega^1_R(\w_R)) \ls 2$ which contradicts  Corollary \ref{nogorc1}. This finishes the proof.
\end{proof}

\section*{Acknowledgments}
We are grateful to Hailong Dao,  
Sean Sather-Wagstaff, and Ryo Takahashi for helpful discussions. We especially thank Sean Sather-Wagstaff for showing us some of the details of Example \ref{SW}.


\begin{thebibliography}{99}
\addcontentsline{toc}{section}{Bibliography}



\bibitem{AY} T. Araya and Y. Yoshino, \emph{Remarks on a depth formula, a grade inequality, and a conjecture of Auslander}, Comm. Algebra {\bf 26} (1998), 3793--3806.

\bibitem{AH94} H. Asashiba and M. Hoshino, \emph{Local rings with vanishing Hochschild cohomologies}, Comm. Algebra \textbf{22} (1994),  2309--2316. %\MR{1268552}


\bibitem{AB} M. Auslander and R.-O. Buchweitz, \emph{The homological theory of maximal Cohen–Macaulay approximations}, Colloque en l’honneur de Pierre Samuel (Orsay, 1987), M \'em. Soc. Math. France (N.S.) No. {\bf 38} (1989), 5--37.

\bibitem{AR}  M. Auslander and I. Reiten, \emph{On a generalized version of the Nakayama conjecture}, Proc. Amer. Math. Soc. {\bf 52} (1975), 69--74.

 \bibitem{Avr89} L. L. Avramov, \emph{Homological asymptotics of modules over local rings, Commutative algebra}; Berkeley, 1987 (M. Hochster, C. Huneke, J. Sally, eds.), MSRI Publ. 15, Springer, New York 1989; pp. 33--62.

\bibitem{Avr94} L. L. Avramov, \emph{Local rings over which all modules have rational Poinca\'e series}, J. Pure Appl. Algebra {\bf 91} (1994), 29--48.

\bibitem{Avr96} L. L. Avramov, \emph{Modules with extremal resolutions}, Math. Res. Lett. {\bf 3} (1996), 319--328.

\bibitem{Avr} L. L. Avramov, Infinite free resolutions, in: Six Lectures on Commutative Algebra, Bellaterra, 1996, in: Progr. Math., vol. 166, Birkh\"auser, Basel, 1998, pp. 1--118.



\bibitem{AvrBuch} L. L. Avramov and R-.O. Buchweitz, \emph{Support varieties and cohomology over complete intersections}, Invent. Math. {\bf 142} (2000), 285--318.

\bibitem{AINS} L. L. Avramov, S. B. Iyengar, S. Nasseh, and S. Sather-Wagstaff, \emph{Homology over trivial extensions of commutative DG algebras}, Comm. Algebra {\bf 47} (2019), 2341--2356.

\bibitem{AINS2} L. L. Avramov, S. B. Iyengar, S. Nasseh, and S. Sather-Wagstaff, \emph{Persistence of homology over commutative Noetherian rings}, in progress.

\bibitem{AKM}  L. L. Avramov, A. R. Kustin, and M. Miller, \emph{Poincar\'e series of modules over local rings of small embedding codepth or small linking number}, J. Algebra {\bf 118} (1988), 162--204.

\bibitem{AM}  L.L. Avramov and A. Martsinkovsky, \emph{Absolute, relative, and Tate cohomology of modules of finite Gorenstein dimension}, Proc. London Math. Soc.  {\bf 85} (2002), 393--440.

\bibitem{BE} D. Buchsbaum and D. Eisenbud, \emph{Algebra structures for finite free resolutions, and some structure theorems for ideals of codimension 3}, Amer. J. Math. {\bf 99} (1977), 447--485.

\bibitem{CIST} O. Celikbas, K.-i. Iima, A. Sadeghi, and R. Takahashi, \emph{On the ideal case of a conjecture of Auslander and Reiten}, Bull. Sci. Math. {\bf 142}, (2018) 94--107.


\bibitem{CFF} L. W. Christensen, H.–B. Foxby, and A. Frankild, \emph{Restricted homological dimensions and Cohen– Macaulayness}, J. Algebra {\bf 251} (2002), 479--502.

\bibitem{CH} L.W. Christensen and H. Holm, \emph{Algebras that satisfy Auslander’s condition on vanishing of cohomology}, Math. Z. {\bf 265} (2010), 21--40.

\bibitem{CH2} L.W. Christensen and H. Holm, \emph{Vanishing of cohomology over Cohen–Macaulay rings}, Manuscripta Math. {\bf 139} (2012) 535--544.

\bibitem{CSV} L.W. Christensen, J. Striuli, and O. Veliche, 
\emph{Growth in the minimal injective resolution of a local ring}, 
J. Lond. Math. Soc. {\bf 81} (2010), 24--44. 

\bibitem{COW} L.W. Christensen, O. Veliche, and J. Weyman, \emph{Linkage classes of grade 3 perfect ideals}, preprint (2018), arXiv:1812.11552. 

\bibitem{CDGetc} A. Croll, R. Dellaca, A. Gupta, J. Hoffmeier, V. Mukundan, Denise R. Tracy, L. M. \c{S}ega, G. Sosa, P. Thompson, \emph{Detecting Koszulness and related homological properties from the algebra structure of Koszul homology}, to appear in Nagoya Math. J.



\bibitem{DE17}
{H. Dao, M. Eghbali, and J. Lyle}, \emph{Hom and Ext, Revisited}, to appear in J.  Algebra.

\bibitem{Fo06} L. Fouli, \emph{A study on the core of ideals}, Ph.D. thesis, Purdue University, 2006. 


\bibitem{FT} H.-B. Foxby and A. Thorup, \emph{Minimal injective resolutions under flat base change}, Proc. Amer. Math. Soc. {\bf 67} (1977), 27--31.


\bibitem{GP90}
V. N. Gasharov and I. V. Peeva, \emph{Boundedness versus periodicity
  over commutative local rings}, Trans.  Amer. Math.  Soc. \textbf{320} (1990),  569--580.
  
 \bibitem{Gu} T. H. Gulliksen, \emph{A proof of the existence of minimal R-algebra resolutions}, Acta Math. {\bf 120} (1968), 53--58.
 
\bibitem{GL} T .H. Gulliksen and G. Levin, \emph{Homology of Local Rings}, Queen's Papers in Pure and Applied Mathematics, Vol. {\bf 20} (Queen’s University, Kingston, Ont., 1969).

\bibitem{Gupta} A. Gupta, \emph{A connection between the good property of an Artinian Gorenstein local ring and that of its quotient modulo socle}, preprint (2017) arXiv:1707.04056.


\bibitem{HH}  D. Hanes and C. Huneke, \emph{Some criteria for the Gorenstein property}, J. Pure Appl. Algebra {\bf 201} (2005), 4--16.

\bibitem{HJ} C. Huneke and D.A. Jorgensen, \emph{Symmetry in the vanishing of Ext over Gorenstein rings}, Math. Scand. {\bf 93} (2003), 161--184.

\bibitem{HL} C. Huneke and G. Leuschke, \emph{On a conjecture of Auslander and Reiten}, J. Algebra {\bf 275} (2004), 781--790.

\bibitem{HS04} C.~Huneke, L.~\c{S}ega, and A.N. Vraciu, \emph{Vanishing of Ext and Tor over some Cohen-Macaulay local rings}, Illinois J. Math. {\bf 48} (295--317).


\bibitem{HW} C. Huneke and R. Wiegand, \emph{Tensor products of modules, rigidity and local cohomology}, Math. Scand. {\bf 81} (1997), 161--183.

\bibitem{Jorgensen} D. Jorgensen, \emph{A generalization of the Auslander-Buchsbaum formula}, J. Pure Appl. Algebra {\bf 144} (1999), 145--155.

\bibitem{JL07}
D.~A Jorgensen and G.~J. Leuschke, \emph{On the growth of the betti
  sequence of the canonical module}, Math. Z. \textbf{256}
  (2007), 647--659.
  
  \bibitem{JL08}
D.~A Jorgensen and G.~J. Leuschke,  \emph{Erratum: On the growth of the betti
  sequence of the canonical module}, Math. Z. \textbf{260}
  (2008), 713--715
  
 \bibitem{JS} D.A. Jorgensen and L.M. \c{S}ega, \emph{Nonvanishing cohomology and classes of Gorenstein rings}, Adv. Math. {\bf 188} (2004),  470--490.
  
\bibitem{K}   A. R. Kustin, \emph{Huneke-Ulrich almost complete intersections of Cohen-Macaulay type two}, J. Algebra {\bf 174} (1995), 373--429.
  
\bibitem{KP} A. R. Kustin and S. Palmer Slattery, \emph{The Poincar\'e series of every finitely generated module over a codimension four almost complete intersection is a rational function}, J. Pure Appl.
Algebra {\bf 95} (1994), 271--295.

\bibitem{KSV} A. R. Kustin, L. M. \c{S}ega, and A. Vraciu, \emph{Poincar\'e series of compressed local Artinian rings with odd top socle degree}, J. Algebra {\bf  505} (2018),  383--419.

\bibitem{Lindo} H. Lindo, \emph{Self-injective commutative rings have no nontrivial rigid ideals}, preprint (2017), arXiv:1710.01793v2.

\bibitem{CMiller} C. Miller, \emph{Complexity of tensor products of modules and a theorem of Huneke-Wiegand}, Proc. Amer. Math. Soc. {\bf 126} (1998), 53--60.

\bibitem{Nagata} M. Nagata, \emph{Local Rings}, Robert E. Kreiger Publishing Co., 1975.

\bibitem{NS} S. Nasseh and S. Sather-Wagstaff, \emph{Vanishing of Ext and Tor over fiber products}. Proc. Amer. Math. Soc. {\bf 145} (2017),  4661--4674.

\bibitem{Peeva} I. Peeva, \emph{Exponential growth of Betti numbers}, J. Pure. Appl. Algebra {\bf 126} (1998), 317--323.
 
 \bibitem{RS} M. E. Rossi and L. M. \c{S}ega, \emph{Poincair\'e series of modules over compressed Gorenstein local rings}, Adv. Math. {\bf 259} (2014), 421--447. 
 
 \bibitem{Rotman} J. J. Rotman, \emph{An introduction to homological algebra}, second edition, Universitext, Springer, New York, 2009.
 
\bibitem{Sally} J. D. Sally, \emph{The {P}oincar\'e series of stretched {C}ohen-{M}acaulay
  rings}, Canad. J. Math. \textbf{32} (1980), no.~5, 1261--1265. %
  

 \bibitem{Sch} G. Scheja, \emph{\"{U}ber die Bettizahlen lokaler Ringe}, Math. Ann. {\bf 155} (1964), 155--172.
  
\bibitem{Scho}  C. Schoeller, \emph{Homologie des anneaux locaux noeth\'eeriens}, C. R. Acad. Sci. Paris S\'er. A {\bf 265} (1967), 76--771.

\bibitem{Sega}  L.M. \c{S}ega, \emph{Vanishing of cohomology over Gorenstein rings of small codimension}, Proc. Amer. Math. Soc. {\bf 131} (2003), 2313--2323.





\bibitem{Sun98} L.-C. Sun, \emph{Growth of Betti numbers of modules over generalized Golod rings}, J. Algebra {\bf 199} (1998), 88--93.
  
  \bibitem{Tate} J. Tate, \emph{ Homology of noetherian rings and local rings}, Illinois J. Math. {\bf 1} (1957), 14--25.

\bibitem{Ulrich} B. Ulrich, \emph{Gorenstein rings and modules with high numbers of generators}, Math. Z. {\bf 188} (1984), 23--32.

\bibitem{Yoshino} Y. Yoshino, \emph{Modules of G-dimension zero over local rings with the cube of maximal ideal being zero}, in: Commutative algebra, singularities and computer algebra (Sinaia, 2002), NATO Sci. Ser. II Math. Phys. Chem. {\bf 115}, Kluwer, Dordrecht, 2003, 255--273. 
\end{thebibliography}
\end{document}